\documentclass[10pt,reqno]{article}

\usepackage[b5paper,top=1.2in,left=0.9in]{geometry}
\usepackage{verbatim}
\usepackage{amssymb}
\usepackage{amsmath}
\usepackage{graphicx}
\usepackage{appendix}
\usepackage{color}
\usepackage{amsthm}

\renewcommand{\d}{\mathrm{d}}
\newcommand{\D}{\mathrm{D}}

\newcommand{\e}{\mathrm{e}}

\newtheorem{Thm}{Theorem}[section]
\newtheorem{Lem}[Thm]{Lemma}
\newtheorem{Prop}[Thm]{Proposition}
\newtheorem{Cor}[Thm]{Corollary}
\newtheorem{Rem}[Thm]{Remark}
\newtheorem{Def}[Thm]{Definition}

\newtheorem{Fact}[Thm]{Fact}
\newtheorem{Nota}[Thm]{Notation}

\newtheorem*{MThm}{Main Theorem}

\def\R{\mathbb{R}}
\def\Q{\mathbb{Q}}
\def\N{\mathbb{N}}
\def\C{\mathbb{C}}
\def\Z{\mathbb{Z}}
\def\T{\mathbb{T}}
\def\P{\mathbb{P}}

\def\to{\longrightarrow}

\def\cU{\mathcal{U}}

\def\a{\alpha}
\def\b{\beta}
\def\e{\epsilon}

\def\D{\Delta}

\def\d{\delta}

\def\l{\lambda}

\def\fb{\mathfrak{b}}
\def\sl{\mathfrak{sl}}
\def\g{\mathfrak{g}}

\def\fh{\mathfrak{h}}
\def\fq{\mathfrak{q}}

\def\ox{\otimes}
\def\o+{\oplus}
\def\bo+{\bigoplus}
\def\x{\times}
\def\p[#1,#2]{\phi_{#1,#2}}
\def\til[#1]{\widetilde{#1}}
\def\what[#1]{\widehat{#1}}

\def\bu{\textbf{u}}
\def\bv{\textbf{v}}

\def\be{\textbf{e}}
\def\bE{\textbf{E}}
\def\bf{\textbf{f}}
\def\bF{\textbf{F}}

\def\bi{\textbf{i}}

\def\bK{\textbf{K}}

\def\bp{\textbf{p}}

\def\bU{\textbf{U}}

\def\b0{\textbf{0}}

\def\z[#1]{z_{#1}}

\def\oo{\infty}
\def\=>{\Longrightarrow}
\def\inj{\hookrightarrow}

\def\<{\langle}
\def\>{\rangle}

\def\^{\wedge}
\def\+{\dagger}

\def\inv{^{-1}}
\def\dis{\displaystyle}
\def\over[#1]{\overline{#1}}
\def\vec[#1]{\overrightarrow{#1}}
\def\mat[#1, #2]{\left[\begin{array}{ccccc}#1\end{array}\left|\begin{array}{c}#2\end{array}\right.\right]}
\def\xto[#1]{\xrightarrow{#1}}
\def\dd[#1,#2]{\frac{d#1}{d#2}}
\def\del[#1,#2]{\frac{\partial #1}{\partial #2}}
\def\Facts[#1]{\begin{Fact}\mbox{}\begin{itemize}#1\end{itemize}\end{Fact}}
\def\Notation[#1]{\begin{Nota}\mbox{}\begin{itemize}#1\end{itemize}\end{Nota}}
\def\Eqn[#1]{\begin{eqnarray*}#1\end{eqnarray*}}
\def\tab{\;\;\;\;\;\;}

\newcommand{\veca}[2][cccccccccccccccccccccccccccccccccccccccccc]{\left(\begin{array}{#1}#2 \\ \end{array} \right)}

\newcommand{\Eq}[1]{\begin{align}#1\end{align}}
\newcommand{\case}[2][cccccccccccccccccccccccccccccccccccccccccc]{\left\{\begin{array}{#1}#2 \\ \end{array}\right.}

\begin{document}
\title{Positive representations of split real quantum groups and future perspectives}
\author{Igor B. Frenkel\footnote{Email: igor.frenkel@yale.edu}, \tab Ivan C.H. Ip\footnote{ Email: ivan.ip@yale.edu} \\\\
Yale University, \\Department of Mathematics,\\10 Hillhouse Ave,\\New Haven, \\CT 06520,  U.S.A.}

\date{\today}

\numberwithin{equation}{section}

\maketitle

\begin{abstract}
We construct a special principal series representation for the modular double $\bU_{\fq\til[\fq]}(\g_\R)$ of type $A_r$ representing the generators by positive essentially self-adjoint operators satisfying the transcendental relations that also relate $q$ and $\til[q]$. We use the cluster variables parametrization of the positive unipotent matrices to derive the formulas in the classical case. Then we quantize them after applying the Mellin transform. Our construction is inspired by the previous results for $\g_\R=\sl(2,\R)$ and is expected to have a generalization to other simply-laced types. We conjecture that our positive representations are closed under the tensor product and we discuss the future perspectives of the new representation theory following the parallel with the established developments of the finite-dimensional representation theory of quantum groups.
\end{abstract}

\newpage
\tableofcontents
\section{Introduction\label{sec:intro}}
In their foundational papers Drinfeld \cite{D} and Jimbo \cite{J} have defined for any finte dimensional complex simple Lie algebra $\g$ (and more generally for any Kac-Moody algebra) a remarkable Hopf algebra $\cU_q(\g)$ known as quantum group. As the notation indicates the quantum group $\cU_q(\g)$ is a deformation of the universal enveloping algebra $\cU(\g)$ for a nonzero complex parameter $q$. They were also able to deform the irreducible finite-dimensional representations of $\cU(\g)$ to corresponding representations of $\cU_q(\g)$, which stay irreducible when $q$ is not a root of unity. These representations as in the classical case are parametrized by the cone of the positive weights $P^+\subset \fh_\R^*$, where $\fh_\R$ is the real form of the Cartan subalgebra $\fh\subset\g$. These representations have a Hermitian form compatible with the quantum counterpart of the canonical Hermitian conjugation on $\g$ and $\cU(\g)$. Let $\g_c\subset \g$ be a compact real form fixed by the classical Hermitian conjugation, and let $\cU_q(\g_c)$ denote the quantum group $\cU_q(\g)$ equipped with the corresponding Hermitian structure that is well defined *-Hopf algebra for the real nonzero parameter $q$ \cite{Tw}.

It is natural to consider other real forms of $\g$, most notably the split real form $\g_\R\subset\g$, and address the question about the $q$-deformation of its irreducible unitary representations. Since the works of Drinfeld and Jimbo the $q$-deformation of various infinite-dimensional irreducible representations were found \cite{Kl,MMNNSU,Sch}. However, the general problem of the $q$-deformations of all unitary irreducible representations of $\g$ seems to be too difficult and we have to be content to consider special classes of representations. For the split real form $\g_\R$ there is one distinguished series of irreducible unitary representations associated to the minimal parabolic or Borel subalgebra $\fb_\R$ parametrized by the $\R_+$-span $P_\R^+\subset\fh_R^*$ of the discrete cone $P^+$. This series, usually called the minimal principal series, also constitutes the decomposition of $L^2(G_\R/K)$, where $G_\R$ is the Lie group corresponding to $\g_\R$ and $K$ is its maximal compact subgroup \cite{H1}, and it also can be viewed as the most continuous series in the decomposition of $L^2(G_\R)$ \cite{H2}. In this paper we present a construction of what we view as the most canonical $q$-deformation of this distinguished series of unitary representations in the case $\g_\R=\sl(n,\R)$ and we suggest a generalization to the case of an arbitrary simply-laced split real form $\g_\R$.

In the case of the split real form the Hermitian conjugation on $\cU_q(\g)$ is well defined for $q$ on the unit circle \cite{Tw}, and let $\cU_q(\g_\R)$ denote again the quantum group with this extra structure. Here we consider
\Eq{q=e^{\pi \bi b^2},\tab \til[q]=e^{\pi \bi b^{-2}},}
where $\bi=\sqrt{-1}$, $b^2\in\R\setminus\Q$ such that $q$ and $\til[q]$ are not roots of unity. The starting point of our construction was the work of Teschner et al \cite{BT, PT1, PT2} who studied extensively a very special "$q$-deformation" of the principal series of representations of the quantum group $\cU_q(\sl(2,\R))$ in the space $L^2(\R)$. Although the formula is a $q$-deformation of the classical action, the parameter $1+\l=\frac{1}{2}+\bi\a$ that gives a unitary representation for $SL(2,\R)$ is also perturbed so that this special series admit the parameter $1+\l=\frac{1}{2}+\frac{1}{2b^2}+\bi\a$. The parameter of the representation appears in formulas with the factor $b$ so that its real part gives $\frac{Q}{2}=\frac{1}{2}(b+b\inv)$, which has no classical limit when
\Eq{b\to0,\tab q\to1,} 
nor the limit corresponding to $b\to\oo$, $\til[q]\to1$. However, what we have gained is that the action aquires a duality between $b\leftrightarrow b\inv$, and the operators become positive self-adjoint, and one can discuss its functional calculus. Details on the functional analysis of unbounded operators, the self-adjointness, as well as the important Lemma \ref{b2lem} can be found for example in \cite{Ip,Ru}. In particular, these representations are naturally extended to the modular double \Eq{\cU_{q\til[q]}(\sl(2,\R))=\cU_q(\sl(2,\R))\ox\cU_{\til[q]}(\sl(2,\R))} of the quantum group first introduced by Faddeev \cite{Fa1,Fa2}. The modular double has two sets of mutually commuting generators $\{E,F,K^{\pm 1}\}$ and $\{\til[E],\til[F],\til[K]^{\pm 1}\}$ satisfying the quantum group relations
\begin{eqnarray}
\label{Uq1} KE&=&q^2EK,\\
\label{Uq2}KF&=&q^{-2}FK,\\
\label{Uq3}EF-FE&=&\frac{K-K\inv}{q-q\inv},
\end{eqnarray}
and similarly for the second set with tildes. To formulate some special additional properties of these representations it is convenient to introduce the rescaled generators
\Eq{e=2\sin(\pi b^2) E,\tab f=2\sin(\pi b^2)F,}
and similarly for the tilde set. Then the representations of the modular double $\cU_{q\til[q]}(\sl(2,\R))$ in $L^2(\R)$ possess the following properties:
\begin{itemize}
\item[(i)] the generators $e,f,K^{\pm 1}$ and $\til[e],\til[f],\til[K]^{\pm 1}$ are represented by positive essentially self-adjoint operators,
\item[(ii)] the generators satisfy the transcendental relations
\Eq{\label{trans}e^{\frac{1}{b^2}}=\til[e],\tab f^{\frac{1}{b^2}}=\til[f],\tab K^{\frac{1}{b^2}}=\til[K].}
\end{itemize}

Our generalization of the Teschner et al construction to the modular double $\cU_{q\til[q]}(\g_\R)$ of the quantum group associated to Lie algebra $\g_\R$ of type $A_r$ of rank $r$ and dimension $r+2N$ is completely analogous to the $\sl(2,\R)$ case. In particular the functional analysis can be reduced to the $\cU_{q\til[q]}(\sl(2,\R))$ case. However in higher rank, there appear new algebraic features. Specifically, the extra relations 
\Eq{K_iE_j=q^{a_{ij}}E_jK_i,\tab K_iF_j=q^{-a_{ij}}F_jK_i,}
where $(a_{ij})$ is the Cartan matrix, and the quantum Serre relations do not allow adjacent variables
$\{E_i, F_i, K_i\}$ and $\{\til[E_j],\til[F_j],\til[K_j]\}$ to commute whenever $|i-j|=1$. To remedy this, we have to introduce a slightly modified version of the quantum group so that the tilde variables commute with the original variables. We define
\Eq{\fq:=q^2=e^{2\pi \bi b^2},\tab \til[\fq]:=\til[q]^2=e^{2\pi \bi b^{-2}},}
and the $\fq$-commutator
\Eq{[A,B]_\fq=AB-\fq\inv BA.} Also let $\T_{\fq\til[\fq]}^{n(n-1)/2}$ be the quantum tori generated by positive self-adjoint operators $\bu_{ij},\bv_{ij},\til[\bu_{ij}], \til[\bv_{ij}]$ for $1\leq i<j\leq n$ such that
\Eq{\bu_{ij}\bv_{ij}=\fq \bv_{ij}\bu_{ij}, \tab \til[\bu_{ij}]\til[\bv_{ij}]=\til[\fq]\til[\bv_{ij}]\til[\bu_{ij}].}

\begin{MThm} Let $\{\bE_i,\bF_i,\bK_i^{\pm1}\}_{i=1}^r$ and $\{\til[\bE]_i,\til[\bF]_i,\til[\bK]_i^{\pm1}\}_{i=1}^r$ be two sets of mutually commuting generators of the modified modular double $\bU_{\fq\til[\fq]}(\g_\R)$ where $\g_\R$ is of type $A_r$, satisfying the relations
\Eq{\bK_i\bE_j=\fq^{a_{ij}}\bE_j\bK_i,\tab \bK_i\bF_j=\fq^{-a_{ij}}\bF_j\bK_i,}
the modified relations
\Eq{[\bE_i,\bF_i]_\fq=\frac{1-\bK_i}{1-\fq},}
as well as the modified quantum Serre relations
\Eq{[\bE_i,[\bE_{i+1},\bE_i]_\fq]=0=[\bE_{i+1},[\bE_{i+1},\bE_i]_\fq],}
\Eq{[\bF_i,[\bF_{i},\bF_{i+1}]_\fq]=0=[\bF_{i+1},[\bF_i,\bF_{i+1}]_\fq],}
and similarly for the second set with tildes. Then there exist a family of irreducible representations of $\bU_{\fq\til[\fq]}(\g_\R)$ parametrized by $\l\in P_\R^+$ on the space $L^2(\R^N)$ with the additional properties (i) and (ii) for $\{\bE_i,\bF_i,\bK_i^{\pm1}\}_{i=1}^r$. Moreever, there is an embedding
\Eq{\bU_{\fq\til[\fq]}(\sl(n,\R))\inj \T_{\fq\til[\fq]}^{n(n-1)/2}.}
\end{MThm}

Though we prove the theorem for the type $A_r$, we expect that it is true for any simply-laced type. In our proof of the theorem we are able to present explicit expressions for the generators and verify directly all the relations and properties. Our verification of the commutation relations, both in the classical and quantum case, is based on a new pictorial method, which we believe presents an independent interest. We also provide a derivation of our formulas in the classical case using a parametrization of the positive unipotent matrices by the cluster variables associated to the canonical orientation of the $A_r$ quiver
\Eq{\label{Arorientation}\circ_1\to\circ_2\to\cdots\circ_{r-1}\to\circ_r}
or its opposite. Then using the positivity properties we rewrite our formulas by applying the Mellin transform. Finally using the rules of the $q$-deformation inspired by the $\sl(2,\R)$ case studied by Teschner et al we obtain the desired representations of the modular double $\cU_{q\til[q]}(\sl(n,\R))$ and its modification $\bU_{\fq\til[\fq]}(\sl(n,\R))$.

To prove our theorem for other types of simply-laced Lie algebras one can also try to choose the cluster variables associated to a canonical orientation of a quiver with a source or a sink at the branching point, however one should expect substantially more complicated formulas than for $A_r$ type. A more conceptual approach should come from the theory of total positivity developed by Lusztig \cite{Lu} and the use of cluster variables associated to different orientations of a quiver as well as relations between them.  Since the $q$-deformed cluster variables for $GL_q^+(n,\R)$ still commute up to powers of $q$, as constructed in \cite{Ip2}, one can, in principle, derive our formulas for the quantum generators directly. Note that for different orientations of a quiver different generators of the quantum group admit especially simple expressions. In particular, for the canonical orientation (resp. its opposite) of the $A_r$ quiver as shown above, the generators $E_r$ and $F_1$ (resp. $E_1$ and $F_r$) contain only one shifting operators.

Though in our paper we construct representations of the modular double $\bU_{\fq\til[\fq]}(\g_\R)$ by a certain deformation of the representations of the classical Lie algebra the actual relation between the quantum and classical cases is rather mysterious. Although there is a formal classical limit in the non-perturbed case when we consider a fixed generic complex parameter $\l$, there is no straightforward classical limit $b\to 0, q\to 1$ when we pass to the positive setting since $\til[q]$ "blows up" as we discussed earlier. It is an interesting problem how to "extract" the classical theory from its quantum counterpart.

The class of representations of the modular double considered in this paper also plays an important role for the deformation of the space of functions on the split real group $G_\R$. Since we always impose the requirement of positive definiteness of quantum generators it is more natural in our setting to consider the deformation of the space of functions on the positive semigroup $G_\R^+\subset G_\R$, which we denote by $F_{q\til[q]}(G_\R^+)$. The construction of this space using quantum cluster variables is proposed in \cite{BZ}. In the case when $G_\R=SL(2,\R)$ it was conjectured by Teschner \cite{PT1} and proved by the second author \cite{Ip} that the space $F_{q\til[q]}(G_\R^+)$ under the regular representation of $\cU_{q\til[q]}(\g_\R)$, and a suitable choice of $L^2$ structure, is decomposed into a direct integral of irreducible representations precisely given by our theorem. It is natural to conjecture that it is also true for the higher rank case, where the space $F_{q\til[q]}(GL_q^+(n,\R))$ equipped with a suitable $L^2$ norm is constructed by the second author explicitly in \cite{Ip2}. Again it would be interesting to compare the classical and quantum cases by characterizing the restriction of the most continuous component of $L^2(G_\R)$ to $G_\R^+$.

Since the positivity properties of generators in our representations of the modular double $\cU_{q\til[q]}(\g_\R)$, as well as  $\bU_{\fq\til[\fq]}(\g_\R)$, play a crucial role we call them \emph{positive principal series representations} or just \emph{positive representations}. Note that there are other ways to deform the principal series of representations even associated with the same minimal parabolic subalgebra $\fb_\R^+\subset\g_\R$. For example a class of representations of $\cU_{q\til[q]}(\sl(n,\R))$ has been constructed in \cite{GKL} but the generators do not seem to be represented by positive self-adjoint operators. Another example of a principal series representation for $\cU_q(\sl(n,\R))$ is constructed in \cite{ANO} using $q$-difference operators. However the variables the authors are using come from the standard coordinates of $U^+$ of the Gauss decomposition, which do not admit the construction of action by positive operators, and hence do not extend to a representation of the modular double.

The plan of the paper is as follows. In Section \ref{sec:urep} we construct the minimal principal series representation for $\cU(\sl(n))$ using the parametrization of totally positive matrix by cluster variables. Then we perform the Mellin transform and obtain a realization of the action using shifting operators. To motivate the calculations of the quantum case, we introduce the commutation relation diagrams for the actions and prove directly all the Lie algebra relations including the Serre relations. In Section \ref{sec:uqrep} we quantize the formulas obtained above, and generalizing the rank 1 case, we construct the positive principal series representations such that the action of $\cU_q(\sl(n,\R))$ is realized by positive essentially self-adjoint operators. We show that our construction is naturally extended to the modular double with the desired transcendental relations \eqref{trans}. In Section \ref{sec:main} we introduce the modified quantum generators to obtain the commutativity between the modular double variables, and present our main theorem. Finally in Section \ref{sec:future} we discussed various future perspectives of the current program.

\textbf{Acknowledgements.} The results of this paper were announced at the workshops in Banff and Aarhus in August and October 2011, respectively. We are grateful to the organizers for creative atmosphere that led to interesting discussions of our results. The first author was supported by the NFS grant DMS-100163.

\section{Principal series representations of $\cU(\sl(n))$\label{sec:urep}}
\subsection{Total positivity and cluster variables}
Total positivity for general reductive group is considered by Lusztig \cite{Lu}. In the case for $G=GL(n,\R)$, a matrix is totally positive if all its entries and the determinant of the minors are positive. Furthremore, the positive monoid admits the Gauss decomposition $GL^+(n,\R)=U_{>0}^-T_{>0}U_{>0}^+$, where $U_{>0}^\pm$ are totally positive upper/lower triangular matrices (considered only for the upper/lower triangular minors), and $T_{>0}$ are diagonal matrices with positive entries.

In \cite{BFZ}, another parametrization using cluster variables are studied. These are given by the "initial minors" that are determinants of the square submatrices which start from either the top row or the leftmost column. Restricted to the upper triangular unipotent $U_{>0}^+$, the cluster variables are $x_{i,j}$, $1\leq i<j\leq n$, where $x_{i,j}$ is the determinant of the initial minor
\Eq{x_{i,j}=\det \veca{z_{1,j-i+1}&...&z_{1,j}\\\vdots&\ddots&\vdots\\z_{i,j-i+1}&\cdots&z_{i,j}}.}
This parametrization correspond to the canonical decomposition of the maximal Weyl group element $w_0$ as
\Eq{w_0=s_{n-1}s_{n-2}...s_2s_1s_{n-1}s_{n-2}...s_2s_{n-1}s_{n-2}...s_3...s_{n-1},}
where $s_k=(k, k+1)$ are the standard transpositions, so that
\Eq{U_{>0}^+=\left\{ \prod_{l=1}^{n-1}\prod_{k=1}^{n-l} s_{n-k}(a_{n-k,n-k-l+1})\bigg|a_{ij}>0\tab\mbox{ for } 1\leq j\leq i\leq n-1\right\},}
with 
\Eq{s_{i}(t)=I_n+t E_{i,i+1},} and $E_{i,j}$ is the standard matrix with 1 at the entry $(i,j)$ and 0 otherwise. Then there is a 1-1 correspondence between $a_{ij}$ and $x_{ij}$ given by
\begin{Prop} We have
\Eq{a_{i,j}=\frac{x_{j,i+1}x_{j-1,i-1}}{x_{j,i}x_{j-1,i}},}
\Eq{x_{i,i+j}=\prod_{m=1}^j \prod_{n=1}^i a_{m+n-1,n}.}
Here we denote by $x_{i,i}=x_{i,0}=x_{0,j}=1$.
\end{Prop}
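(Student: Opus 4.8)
The plan is to recognize that the two displayed formulas are mutually inverse, so that proving one of them together with a purely combinatorial manipulation yields the other and simultaneously establishes the claimed bijection between the parameters $\{a_{ij}\}$ and the cluster variables $\{x_{ij}\}$. The substantive statement is the product formula $x_{i,i+j}=\prod_{m=1}^j\prod_{n=1}^i a_{m+n-1,n}$: it asserts that every initial (``solid'') minor of the totally positive unipotent matrix $U=\prod_{l=1}^{n-1}\prod_{k=1}^{n-l}s_{n-k}(a_{n-k,n-k-l+1})$ is a single monomial in the factorization parameters, with all other contributions to the determinant cancelling. Once this is in hand, the second formula (a Chamber-Ansatz expression in the sense of \cite{BFZ}) drops out by substitution.

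To prove the product formula I would pass to the planar-network (wiring-diagram) model of the factorization. Each elementary factor $s_i(t)=I_n+tE_{i,i+1}$ is a single crossing of weight $t$ between wires $i$ and $i+1$, so the whole product $U$ is encoded by a planar directed network whose sources are the rows $1,\dots,n$ and whose sinks are the columns $1,\dots,n$, and the entry $U_{a,b}$ is the sum over directed paths from source $a$ to sink $b$ of the products of edge weights. By the Lindstr\"om--Gessel--Viennot lemma the minor on rows $I$ and columns $J$ equals the (here unsigned, by total positivity) sum over families of vertex-disjoint paths from $I$ to $J$. The key geometric point is that for the initial minor $x_{i,i+j}$, whose row set is $\{1,\dots,i\}$ and whose column set is the contiguous block $\{j+1,\dots,i+j\}$, the planarity of the network together with the specific reduced word $w_0=s_{n-1}\cdots s_1\,s_{n-1}\cdots s_2\cdots s_{n-1}$ forces the disjoint-path family to be unique. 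Reading off the weight of this single family gives exactly the double product $\prod_{m=1}^j\prod_{n=1}^i a_{m+n-1,n}$.

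With the product formula established, the first formula follows by a direct ratio computation: substituting the double products for each of $x_{j,i+1}$, $x_{j-1,i-1}$, $x_{j,i}$, $x_{j-1,i}$ and cancelling, the factors telescope down to the single surviving parameter $a_{i,j}$, the boundary conventions $x_{i,i}=x_{i,0}=x_{0,j}=1$ absorbing the degenerate ranges. Because the product formula is a monomial map of $\R_{>0}^{n(n-1)/2}$ with integer exponent matrix and the ratio formula exhibits an integer-exponent one-sided inverse, that exponent matrix is unimodular and the correspondence $a_{ij}\leftrightarrow x_{ij}$ is a bijection of positive orthants, as claimed.

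The main obstacle I anticipate is not any single determinant evaluation but the index bookkeeping in the network argument: establishing cleanly that the vertex-disjoint path family for each solid minor is unique, and then matching the multiset of crossings it traverses to the precise index range $\{a_{m+n-1,n}:1\le m\le j,\ 1\le n\le i\}$. An entirely elementary alternative, avoiding the network formalism, is to induct on the size of the matrix (equivalently on the number of factors), using that right multiplication by $s_i(t)$ performs the column operation sending column $i+1$ to (column $i+1$) $+\,t\,$(column $i$) and tracking how each initial minor transforms; the inductive step again reduces to showing that only one term survives, which is where the combinatorial care is concentrated.
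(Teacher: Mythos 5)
The paper gives no proof of this Proposition at all: it is imported wholesale from the Berenstein--Fomin--Zelevinsky parametrization (the reference \cite{BFZ} cited just before the statement), so your attempt has to be judged against the literature rather than against an in-paper argument. Your route is in fact the standard one, and it is correctly organized. The monomial formula $x_{i,i+j}=\prod_{m=1}^j\prod_{n=1}^i a_{m+n-1,n}$ is indeed the substantive claim: rewriting it as $x_{p,q}=\prod_{s=1}^{p}\prod_{r=s}^{q-p+s-1}a_{r,s}$, the ratio $x_{j,i+1}x_{j-1,i-1}/(x_{j,i}x_{j-1,i})$ telescopes to the single factor $a_{i,j}$ exactly as you say, with the conventions $x_{i,i}=x_{i,0}=x_{0,j}=1$ corresponding to empty products; and your unimodularity remark is valid, since a one-sided integer inverse of a square integer exponent matrix is automatically two-sided, so the monomial maps are mutually inverse bijections of $\R_{>0}^{n(n-1)/2}$.

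The one step you assert rather than prove---uniqueness of the vertex-disjoint path family---is the entire content of the monomial formula, and you flag it yourself as the open obstacle; so let me record why it holds, in the form your setup makes available. Reading the word left to right, the crossings come in groups $G_1,\dots,G_{n-1}$, where $G_l$ consists of the crossings $s_{n-1},s_{n-2},\dots,s_l$ in decreasing order, the $s_m$-crossing of $G_l$ carrying weight $a_{m,m-l+1}$. A path out of source $1$ needs an $s_1$-crossing for its first jump, and the only one is in $G_1$; thereafter every $s_m$-crossing in $G_1,\dots,G_{m-1}$ lies to the \emph{left} of the $s_{m-1}$-crossing of $G_{m-1}$, so its $m$-th jump is forced to be the $s_m$-crossing of $G_m$ (no disjointness needed for path $1$). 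Inductively, once paths $1,\dots,k-1$ are forced, path $k$'s $m$-th jump must use an $s_{k+m-1}$-crossing lying strictly between the $s_{k+m-2}$-crossing of $G_{m-1}$ (where its previous jump left it) and the $s_{k+m-2}$-crossing of $G_m$ (where path $k-1$ arrives at that level, by disjointness); the only such crossing is the $s_{k+m-1}$-crossing of $G_m$, of weight $a_{k+m-1,k}$. Multiplying over $m=1,\dots,j$ and $k=1,\dots,i$ gives exactly the double product. One small correction of attribution: the collapse of the LGV signs to $+1$ is due to planarity and the monotone (weakly increasing) nature of the paths, which force the order-preserving matching of $\{1,\dots,i\}$ with $\{j+1,\dots,i+j\}$; it is not a consequence of ``total positivity.'' With the forcing paragraph added, your proposal is a complete and correct proof of the statement the paper leaves to \cite{BFZ}.
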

Furthermore, by calculating the Jacobian of the change of variables from the standard coordinates $z_{ij}$ to the cluster variables $x_{ij}$, we have
\begin{Prop}\label{Haar} The Haar measure on $U_{>0}^+$ induced by $\prod_{1\leq i<j\leq n}dz_{ij}$ on $U^+$ is given by
\Eq{\prod_{1\leq i<j\leq n}\frac{dx_{ij}}{x_{ij}}\prod_{i=1}^{n-1}dx_{in}.}
\end{Prop}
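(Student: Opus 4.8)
The plan is to compute directly the Jacobian of the change of coordinates $(z_{ij})_{i<j}\mapsto(x_{ij})_{i<j}$ on $U_{>0}^+$ and to exploit its triangularity. First I would observe, via Laplace (cofactor) expansion of the defining minor $x_{i,j}$ along its rightmost column (the column indexed by $j$), that $x_{i,j}$ is affine-linear in the single entry $z_{i,j}$ occupying the bottom-right corner of the submatrix: the coefficient of $z_{i,j}$ is the minor on rows $1,\dots,i-1$ and columns $j-i+1,\dots,j-1$, which is precisely $x_{i-1,j-1}$. Hence
\[
\frac{\partial x_{i,j}}{\partial z_{i,j}}=x_{i-1,j-1}.
\]
Moreover, since $x_{i,j}$ only involves the entries $z_{a,c}$ with $a\le i$ and $j-i+1\le c\le j$, one has $\partial x_{i,j}/\partial z_{a,c}=0$ unless $a\le i$ and $c\le j$.

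Next I would order the index set $\{(i,j):1\le i<j\le n\}$ by any total order refining the product partial order $(a,c)\preceq(i,j)\iff a\le i,\ c\le j$ (for instance lexicographically in $(i,j)$, or by the value $i+j$). With respect to this ordering the dependency pattern just established makes the Jacobian matrix $\big(\partial x_{i,j}/\partial z_{a,c}\big)$ triangular, so its determinant is the product of the diagonal entries,
\[
\det\!\left(\frac{\partial x}{\partial z}\right)=\prod_{1\le i<j\le n}x_{i-1,j-1}.
\]
Re-indexing by $a=i-1,\ b=j-1$, the range becomes $0\le a<b\le n-1$; the factors with $a=0$ equal $x_{0,b}=1$ by the stated convention and drop out, leaving $\det(\partial x/\partial z)=\prod_{1\le a<b\le n-1}x_{a,b}$.

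Finally, since all cluster variables are positive on $U_{>0}^+$, the measure transforms as $\prod_{i<j}dz_{ij}=|\det(\partial x/\partial z)|^{-1}\prod_{i<j}dx_{ij}$. Separating off the last-column variables $x_{1,n},\dots,x_{n-1,n}$, which never appear in the Jacobian determinant, yields the interior variables $x_{ij}$ ($j<n$) each contributing a factor $dx_{ij}/x_{ij}$ while the last-column variables contribute $dx_{in}$, i.e. exactly $\prod_{1\le i<j\le n-1}\frac{dx_{ij}}{x_{ij}}\prod_{i=1}^{n-1}dx_{in}$, the claimed Haar measure. (Alternatively one could compose the two substitutions $z\mapsto a\mapsto x$ of the preceding Proposition, but the direct minor computation is cleaner.)

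The step requiring the most care is the triangularity argument: one must check that the chosen total order is genuinely compatible with the dependency pattern of the minors and that the diagonal entries $x_{i-1,j-1}$ are nonvanishing (guaranteed by total positivity), while keeping careful track of the boundary conventions $x_{0,b}=1$ so that precisely the last-column variables emerge without a $1/x$ factor and all interior variables acquire one.
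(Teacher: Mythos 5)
Your proof is correct and follows exactly the route the paper indicates---computing the Jacobian of the change of variables $z \mapsto x$, which the paper states without any detail; your observation that $\partial x_{i,j}/\partial z_{i,j} = x_{i-1,j-1}$ together with the triangularity of the Jacobian in any order refining the product partial order supplies precisely the missing computation. Note that your final formula, with the first product running over $1 \le i < j \le n-1$, is the only sensible reading of the proposition as printed (whose first product literally includes $j = n$ and would double-count the differentials $dx_{in}$), so this discrepancy is a typo in the statement rather than a gap in your argument.
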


\subsection{Infinitesimal action\label{sec:infinite}}
The minimal principal series representation for $\cU(\sl(n,\R))$ can be realized on the totally positive matrices as the infinitesimal action of $g\in SL^+(n,\R)$ acting on $\C[U_{>0}^+]$ by
\Eq{g\cdot f(g_+) =\chi_\l(g_+g) f([g_+g]_+).}
Here we write the Gauss decomposition of $g$ as
\Eq{g=g_-g_0g_+,}
so that $[g]_+=g_+$ is the projection of $g$ onto $U_{>0}^+$, and $\chi_\l(g)$ is the character function defined by
\Eq{\chi_\l(g)=\prod_{i=1}^n u_i^{2\l_i},}
where $\l=(\l_i)\in \C^n$ and $u_i$ is the entry of the diagonal part $g_0\in T_{>0}$.

For a general matrix, the projection onto $U^+$ of the Gauss decomposition is given by:
\begin{Lem}
The entry $z_{ij}$ of $[g]_+$ is given by
\Eq{\frac{\det N_i^j}{\det N_i},}
where $N_i$ is the $i\x i$ determinant of the main diagonal minor of $g$, and $N_i^j$ is the minor $N_i$ with the last column replaced by the $j$-th column $\<g_{1j},...,g_{ij}\>^T$.
\end{Lem}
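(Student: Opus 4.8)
The plan is to recognize the stated formula as the classical expression for the entries of the unipotent factor in a Gauss (equivalently $LDU$) decomposition, and to prove it by a single block computation. First I would fix $1\le i<j\le n$ and split all indices into the two blocks $\{1,\dots,i\}$ and $\{i+1,\dots,n\}$, writing each factor of $g=g_-g_0g_+$ in the corresponding $2\times 2$ block form. Because $g_-$ is lower unitriangular, its top $i$ rows vanish outside the first $i$ columns, so the top $i$ rows of $g$ factor as $L_{11}D_1[\,U_{11}\mid U_{12}\,]$, where $L_{11},U_{11}$ are the $i\times i$ leading blocks of $g_-,g_+$ (both unitriangular), $D_1$ is the leading block of $g_0$, and $U_{12}$ is the off-diagonal block of $g_+$. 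Taking the top-left $i\times i$ minor immediately gives $N_i=\det(L_{11}D_1U_{11})=\det D_1$, since the two unitriangular determinants equal $1$.

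For the numerator I would select, from these same top $i$ rows, the columns $1,\dots,i-1,j$. This yields $\det N_i^j=\det(L_{11}D_1M)=\det L_{11}\cdot\det D_1\cdot\det M=N_i\cdot\det M$, where $M$ is the $i\times i$ matrix whose first $i-1$ columns are the first $i-1$ columns of $U_{11}$ and whose last column is the $j$-th column of $g_+$ restricted to rows $1,\dots,i$, namely $\<z_{1j},\dots,z_{ij}\>^T$. The key point is then to evaluate $\det M$: for $k\le i-1$ the $k$-th column of the upper unitriangular $U_{11}$ has vanishing $i$-th entry and lies in the leading $(i-1)\times(i-1)$ upper unitriangular block, so the last row of $M$ is $\<0,\dots,0,z_{ij}\>$. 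Expanding along that row gives $\det M=z_{ij}\cdot 1=z_{ij}$, and dividing we obtain $z_{ij}=\det N_i^j/\det N_i$, as claimed.

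I expect no deep obstacle; the one thing to get right is the block bookkeeping in the step isolating the leading minor, in particular verifying that the top $i$ rows of $g_-$ contribute only $L_{11}$ and that the blocks $L_{21},L_{22},D_2,U_{22}$ never enter. As consistency checks I would note that $j=i$ returns $z_{ii}=1$ (numerator equals denominator) while $j<i$ gives $z_{ij}=0$ (the replaced column then coincides with one already present, so $N_i^j$ is singular), both matching the fact that $g_+$ is upper unitriangular. An equivalent route, which I would mention only as an alternative, is to apply Cramer's rule to the identity $g_0g_+=g_-\inv g$: the $i$-th row of the right-hand side is the unique combination of rows $1,\dots,i$ of $g$ with leading coefficient $1$ whose first $i-1$ entries vanish, and solving for its $j$-th entry reproduces the same ratio of minors.
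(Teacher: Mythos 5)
Your proof is correct. The paper actually states this lemma without proof, treating it as the classical minor formula for the entries of the unipotent factor in a Gauss ($LDU$) decomposition, so there is no argument of the authors to compare against; your block computation — isolating the top $i$ rows as $L_{11}D_1\left[\,U_{11}\mid U_{12}\,\right]$, getting $\det N_i=\det D_1$, and evaluating the column-swapped minor by expansion along the last row — is a complete and standard justification of exactly that fact, and your consistency checks ($j=i$ and $j<i$) confirm the bookkeeping.
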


Now we can find the action of $\exp(t X)\in SL(n,\R)$ and hence $X\in \sl(n,\R)$ by infinitesimal method.

First we consider $X=E_i$. The elementry matrix
\Eq{\exp(tE_i)=\veca{I_{i-1}&0&0 & 0\\0&1&t&0\\0&0&1&0\\0&0&0&I_{n-i-1}}=I+te_{i,i+1}}
only modifies the $i+1$-th column. Therefore we can immediately read off its action:
\begin{Prop}
For $1\leq j<k\leq n$ one has
\Eq{\exp(tE_i)\cdot x_{jk}=\left\{\begin{array}{cc}x_{jk}&\mbox{if $k-j\neq i$,}\\x_{jk}+N_{i;j}t &\mbox{if $k-j=i$,}\end{array}\right.}
where $N_{i;j}$ is the determinant of the original $j\x(j+1) $ block matrix from $z_{1,i}$ to $z_{j,i+j}$ with the second column removed:
\Eq{N_{i;j}=\det \veca{z_{1,i}&z_{1,i+2}&\cdots&z_{1,i+j}\\\vdots&\vdots&\ddots&\vdots\\z_{j,i}&z_{j,i+2}&\cdots&z_{j,i+j}}.}

In particular, $N_{1;1}=1$ and $N_{i;1}=x_{1i}$ for $i>1$.
\end{Prop}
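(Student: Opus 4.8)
The plan is to compute the group action of $\exp(tE_i)$ explicitly and then read off the coefficient of $t$. First I would exploit the fact that $E_i=e_{i,i+1}$, so that $\exp(tE_i)=I+te_{i,i+1}$ is itself upper unitriangular. Hence for any $g_+\in U_{>0}^+$ the product $g_+\exp(tE_i)$ is again upper unitriangular, its Gauss decomposition is trivial (diagonal part $I$, so $[g_+\exp(tE_i)]_+=g_+\exp(tE_i)$), and therefore $\chi_\l(g_+\exp(tE_i))=1$. The defining formula of the action then collapses to
\Eq{(\exp(tE_i)\cdot x_{jk})(g_+)=x_{jk}(g_+\exp(tE_i)),}
reducing the problem to evaluating the cluster variable on the matrix obtained from $g_+$ by right multiplication by $I+te_{i,i+1}$.

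The second step is to identify this right multiplication as an elementary column operation. Computing $g_+(I+te_{i,i+1})$ entrywise shows that it leaves every column fixed except the $(i+1)$-th, which becomes (old column $i+1$) $+\,t\cdot$(old column $i$). Recalling that $x_{jk}$ is the $j\x j$ minor on rows $1,\dots,j$ and columns $k-j+1,\dots,k$, multilinearity of the determinant in its columns gives that $x_{jk}$ of the modified matrix equals $x_{jk}$ plus $t$ times the determinant of the same submatrix with its column $i+1$ replaced by column $i$ of $g_+$. Since only a single column is altered, this expansion is exactly affine in $t$ with no higher-order terms.

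The heart of the argument is a short case analysis on the column indices, according to whether the modified column $i+1$ and the inserted column $i$ fall inside the window $\{k-j+1,\dots,k\}$. If $i+1$ lies outside this window the submatrix is untouched and $x_{jk}$ is unchanged. If both $i$ and $i+1$ lie inside, the replacement produces two equal columns and the $t$-coefficient vanishes; since the window is a contiguous range, the only way for $i+1$ to lie inside while $i$ does not is that $i+1$ is the leftmost column, which forces $k-j+1=i+1$, i.e. $k-j=i$. This is precisely the dichotomy in the statement. In that surviving case the coefficient determinant has columns $i,i+2,i+3,\dots,i+j$ on rows $1,\dots,j$, which is exactly $N_{i;j}$, yielding $x_{jk}+N_{i;j}t$.

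Finally I would record the normalizations: for $j=1$ the minor $N_{i;1}$ is the $1\x1$ block $z_{1,i}$, which is the cluster variable $x_{1i}$, and in particular $N_{1;1}=z_{1,1}=1$ because $g_+$ is unipotent. I expect no genuine obstacle here; the computation is elementary. The one point requiring care is the bookkeeping of column indices in the case analysis, namely the observation that the inserted column $i$ sits immediately to the left of the defining window of $x_{jk}$ exactly when $k-j=i$, so that the surviving minor is the determinant $N_{i;j}$ rather than a vanishing determinant (two equal columns) or the unchanged $x_{jk}$.
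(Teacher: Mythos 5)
Your proposal is correct and follows essentially the same route as the paper: since $\exp(tE_i)=I+te_{i,i+1}$ is upper unitriangular, the Gauss projection and character are trivial, and right multiplication is the column operation adding $t$ times column $i$ to column $i+1$, from which the paper likewise ``immediately reads off'' the action on the initial minors. Your explicit case analysis (column $i+1$ outside the window, both columns inside giving a repeated column, or $i+1$ the leftmost column forcing $k-j=i$) is exactly the bookkeeping the paper leaves implicit, and it correctly identifies the surviving coefficient as $N_{i;j}$.
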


Next we consider $X=F_i$. The elementry matrix
\Eq{\exp(tF_i)=\veca{I_{i-1}&0&0 & 0\\0&1&0&0\\0&t&1&0\\0&0&0&I_{n-i-1}}=I+te_{i+1,i}}
only modifies the $i$-th column.

Since $F_i$ is lower triangular, the action will induce lower triangular term where only a single entry is off. Therefore applying the projection formulas as above, the entries can be easily determined:

\begin{Lem} The projection $g_+$ of $g$ under the action of $F_i$ is given by
\Eq{\left([\exp(tF_i)\cdot g]_+\right)_{jk}:=\case{z_{jk}&\mbox{if $j<i$ and $k\neq i$,}\\z_{ji}+z_{j,i+1}t&\mbox{if $j<i$ and $k= i$,}\\\\\dis\frac{z_{jk}}{1+z_{i,i+1}t}&\mbox{if $j=i$,}\\\\z_{jk}+t \det\veca{z_{i,i+1}&z_{i,k}\\1&z_{i+1,k}}&\mbox{if $j=i+1$,}\\z_{jk}&\mbox{if $j>i+1$.}}}
\end{Lem}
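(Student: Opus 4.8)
The plan is to apply the general projection formula established just above---which expresses the entry in row $a$, column $b$ of $[g]_+$ as a ratio $\det M_a^b/\det M_a$ of an $a\times a$ leading principal minor and its modification (the last column replaced by column $b$)---to the single matrix whose $U^+$-part we must identify. Writing $h:=g\,\exp(tF_i)$ for the matrix to be projected (consistent with the action $g_+\mapsto[g_+g]_+$ and with the preceding remark that only column $i$ is altered), the first step is the structural observation that right multiplication by $\exp(tF_i)=I+te_{i+1,i}$ replaces $z_{ji}$ by $h_{ji}=z_{ji}+tz_{j,i+1}$ and leaves every other entry equal to that of $g$. Since $g$ is unitriangular, this makes $h$ upper unitriangular \emph{except} for the single subdiagonal entry $h_{i+1,i}=z_{i+1,i}+tz_{i+1,i+1}=t$; every other would-be subdiagonal entry of column $i$ vanishes because $z_{ji}=z_{j,i+1}=0$ for $j>i+1$. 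This one-entry defect drives the whole computation.

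Next I would compute the denominators $\det M_a$. For $a<i$ the block is unitriangular, so $\det M_a=1$; for $a=i$ it is upper triangular with the single modified diagonal entry $h_{ii}=1+tz_{i,i+1}$, so $\det M_i=1+tz_{i,i+1}$. For $a\ge i+1$ the block contains the subdiagonal entry $h_{i+1,i}=t$, and here I would use the elementary fact that a matrix triangular apart from a single subdiagonal entry at position $(i+1,i)$ has determinant $\prod_m h_{mm}-h_{i,i+1}h_{i+1,i}\prod_{m\neq i,i+1}h_{mm}$; substituting gives $(1+tz_{i,i+1})-z_{i,i+1}\cdot t=1$. This cancellation, so that the denominator is $1$ in every row below $i$, is the first point requiring care.

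For the numerators I would exploit that column $i$ is the only $t$-dependent column, so by multilinearity every $\det M_a^b$ is affine-linear in $t$ and splits as $\det M_a^b(g)+t\cdot(\text{correction})$, where each piece is a minor of the \emph{unitriangular} matrix $g$ and hence, by the projection formula applied to $g$ itself (for which $[g]_+=g$), reduces to an explicit $z$-entry. Four cases then appear. For $a<i$ one gets $z_{ak}$ when $b\neq i$ and $z_{ai}+tz_{a,i+1}$ when $b=i$. For $a=i$ the replaced last column has index $b>i$, so the perturbed column $i$ is removed and the numerator equals $z_{ik}$; dividing by $\det M_i$ yields $z_{ik}/(1+tz_{i,i+1})$. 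For $a>i+1$ the $t$-correction is the determinant of a matrix in which the perturbing column (a copy of column $i+1$ of $g$) duplicates an already-present column, hence vanishes, leaving the entry equal to $z_{ak}$.

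The genuinely delicate case, and the main obstacle, is $a=i+1$, where $\det M_{i+1}=1$ and the result must exhibit the $2\times2$ determinant of the statement. Here I would compute $\det M_{i+1}^k$ by splitting column $i$ and cofactor-expanding each piece along the last row (row $i+1$), whose only nonzero entries sit in the final two columns. The constant part contributes $z_{i+1,k}$, and the $t$-part produces exactly $-z_{ik}+z_{i,i+1}z_{i+1,k}=\det\begin{pmatrix} z_{i,i+1} & z_{ik}\\ 1 & z_{i+1,k}\end{pmatrix}$, matching the claimed entry $z_{jk}+t\det(\cdots)$. Assembling the four cases together with their denominators reproduces precisely the piecewise formula; the only nonroutine ingredients are the subdiagonal determinant identity and this last cofactor expansion, while everything else is bookkeeping with minors of $g$.
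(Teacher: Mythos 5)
Your proof is correct and takes essentially the same approach as the paper: apply the preceding projection lemma $\left([g]_+\right)_{jk}=\det N_j^k/\det N_j$ to $h=g\exp(tF_i)$, observe that every denominator equals $1$ except in row $i$, where the factor $1+z_{i,i+1}t$ appears, and then evaluate the numerators case by case. The paper compresses all of your remaining work (the cancellation giving $\det N_a=1$ for $a\geq i+1$, the multilinearity splitting in column $i$, and the cofactor expansion producing the $2\times 2$ determinant in row $i+1$) into the phrase ``a simple determinant calculation,'' which your write-up simply makes explicit.
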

\begin{proof}
We note that the denominator for the projection formula is 1 unless $j=i$, which induces the factor $1+z_{i,i+1}t$ in the diagonal part. Therefore the formula follows from a simple determinant calculation.
\end{proof}
The diagonal factor $1+z_{i,i+1}t$ can be combinand with the character function, and we obtain
\begin{Prop} For $1\leq j<k\leq n$ one has
\Eq{\exp(tF_i)\cdot x_{jk}=\case{x_{jk} &\mbox{if $j<i$ and $k\neq i$,}\\x_{jk}+N^{i;j}t &\mbox{if $j<i$ and $k=i$,}\\x_{ik}(1+z_{i,i+1}t)^{2\l_i-1}&\mbox{if $j=i$,}\\x_{jk}&\mbox{if $j>i$,}\\}}
where $N^{i;j}$ is the determinant of the original $j\x(j+1) $ block matrix from $z_{1,i-j+1}$ to $z_{j,i+1}$ with the second to last column removed:
\Eq{N^{i;j}=\det \veca{z_{1,i-j+1}&\cdots&z_{1,i-1}&z_{1,i+1}\\\vdots&\ddots&\vdots&\vdots\\z_{j,i-j+1}&\cdots&z_{j,i-1}&z_{j,i+1}}.}
In particular $N^{i;1}=x_{1,i+1}$.
\end{Prop}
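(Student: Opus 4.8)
The plan is to push the full group element $\exp(tF_i)$ through the action formula $\exp(tF_i)\cdot x_{jk}=\chi_\l(g_+\exp(tF_i))\,x_{jk}\big([g_+\exp(tF_i)]_+\big)$ and reduce everything to one minor identity together with a case analysis on how the column window $\{k-j+1,\dots,k\}$ of the cluster variable sits relative to $i$. The first observation I would record is purely structural: right multiplication by $\exp(tF_i)=I+te_{i+1,i}$ replaces column $i$ of $g_+$ by (column $i$)$\,+\,t\cdot$(column $i+1$) and leaves every other column fixed, destroying upper-triangularity only through the new entry $t$ in position $(i+1,i)$. Writing $g:=g_+\exp(tF_i)$, I would then compute its leading principal minors directly: the column operation is internal to $\{1,\dots,a\}$ whenever $a\ge i+1$ and invisible when $a<i$, so $\det N_a(g)=1$ for all $a\ne i$, while the $i$-th leading minor picks up a correction $t\,z_{i,i+1}$ (expand along row $i$), giving $\det N_i(g)=1+z_{i,i+1}t$. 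This already pins down the character and isolates the single factor $1+z_{i,i+1}t$ that the preceding Lemma attaches to row $i$ of the projection.

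The key technical tool I would establish is the closed form for the cluster variable of a Gauss projection. Writing $g=g_-g_0g_+'$ and applying Cauchy--Binet to $g=(g_-g_0)g_+'$, the lower-triangularity of $g_-g_0$ kills every term except the one indexed by $\{1,\dots,j\}$, yielding $x_{jk}([g]_+)=\Delta_{j,k}(g)/\det N_j(g)$, where $\Delta_{j,k}(g)$ is the initial minor of $g$ on rows $1,\dots,j$ and columns $k-j+1,\dots,k$. (Equivalently one substitutes the entrywise projection formulas of the previous Lemma into the $j\times j$ determinant; the identity simply bookkeeps that expansion and, crucially, spares one the delicate cancellation of the row-$i$ factor $(1+z_{i,i+1}t)^{-1}$ against the row-$(i+1)$ modification when $j>i$.) Since only column $i$ of $g$ has changed, multilinearity expands the numerator as $\Delta_{j,k}(g)=x_{jk}+t\cdot(\text{same minor with column }i\text{ replaced by column }i+1)$.

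The heart of the argument is then the window bookkeeping for this correction term. If column $i$ is absent from $\{k-j+1,\dots,k\}$ the correction vanishes; if both $i$ and $i+1$ lie in the window it is a determinant with a repeated column, hence again zero. The only surviving correction occurs when $i$ is the rightmost column of the window and $i+1$ lies outside, that is exactly when $k=i$, and there the replacement produces precisely the minor $N^{i;j}$ of the statement (with $N^{i;1}=x_{1,i+1}$ read off immediately). Feeding this into $\Delta_{j,k}(g)/\det N_j(g)$ splits the result into three regimes: for $j<i$ one has $\det N_j(g)=1$ and the correction appears only at $k=i$, giving the first two cases; for $j>i$ the window satisfies $k>j>i$ so it never ends at $i$, the numerator is $x_{jk}$ and $\det N_j(g)=1$; and for $j=i$ one again has $k>i$ so the numerator is $x_{ik}$, while $\det N_i(g)=1+z_{i,i+1}t$ contributes a factor $(1+z_{i,i+1}t)^{-1}$ that I would combine with the character power $(1+z_{i,i+1}t)^{2\l_i}$ (the $i$-th being the only altered leading minor) to reach the exponent $2\l_i-1$.

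I expect the main obstacle to be precisely this dichotomy of whether column $i+1$ falls inside the determinant window, since it is what cleanly separates the unchanged cases from the single $+N^{i;j}t$ correction and simultaneously governs whether the diagonal factor $1+z_{i,i+1}t$ survives into the answer. The one regime demanding care beyond bookkeeping is $j>i$: expanded naively through the entrywise projection Lemma it requires showing that the row-$i$ denominator cancels against the row-$(i+1)$ and column-$i$ modifications, and it is to sidestep this that I would prefer the minor-quotient identity, which makes the cancellation automatic. A secondary point is simply to confirm that the right multiplication alters only the $i$-th leading principal minor, so that the character enters through a single power of $1+z_{i,i+1}t$ and combines cleanly in the $j=i$ case.
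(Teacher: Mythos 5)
Your proposal is correct, but it follows a genuinely different route from the paper's. The paper proves this Proposition in two steps: the preceding Lemma computes the projection $[\exp(tF_i)\cdot g]_+$ \emph{entrywise} (row $i$ acquires the factor $(1+z_{i,i+1}t)^{-1}$, column $i$ and row $i+1$ acquire corrections), and the Proposition then follows by substituting those entries into the determinants defining $x_{jk}$ and combining the diagonal factor with the character. In that route the cases $j=i$ and $j>i$ require the row-$i$ denominator to cancel against the row-$(i+1)$ and column-$i$ modifications inside the determinant, a cancellation the paper compresses into ``a simple determinant calculation.'' You bypass the entrywise Lemma entirely: the Cauchy--Binet identity $x_{jk}([g]_+)=\Delta_{j,k}(g)/\det N_j(g)$, the computation $\det N_a(g)=1$ for $a\neq i$ and $\det N_i(g)=1+z_{i,i+1}t$, and multilinearity in the single modified column reduce everything to the window dichotomy (column $i$ absent; both $i$ and $i+1$ present, giving a repeated column; or $k=i$, producing exactly $N^{i;j}$ with the correct sign). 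This makes the cancellations for $j\geq i$ automatic and the identification of the correction term immediate; what the paper's approach buys in exchange is the explicit projected matrix itself, which is finer information than the initial minors alone.

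One shared caveat, which is not a gap in your argument relative to the paper: like the paper, you attach the character entirely to the $j=i$ case. Under the paper's literal convention $\chi_\lambda=\prod_a u_a^{2\lambda_a}$, the character of $g_+\exp(tF_i)$ is the \emph{global} prefactor $(1+z_{i,i+1}t)^{2(\lambda_i-\lambda_{i+1})}$, since the altered leading minor $\det N_i$ changes both $u_i$ and $u_{i+1}$. The paper's formulas (here, in the $H_i$ Proposition, and in the subsequent Theorem) implicitly reparametrize $\lambda$ so that the character is indexed by leading principal minors (fundamental weights), which is precisely what your parenthetical ``only altered leading minor'' justification amounts to; with that convention your derivation of the exponent $2\lambda_i-1$, and your suppression of the prefactor in the $j\neq i$ cases, coincides exactly with the paper's.
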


Finally the action of
\Eq{\exp(tH_i)=\veca{I_{i-1}&0&0 & 0\\0&e^t&0&0\\0&0&e^{-t}&0\\0&0&0&I_{n-i-1}}}
can also be easily found:

\begin{Prop} The action of $\exp(tH_i)$ is given by:
\Eq{\exp(tH_i)\cdot x_{jk}=\case{e^{2\l_i t}e^t x_{jk} &\mbox{if $k=i$,}\\e^{2\l_i t}e^{-t}x_{jk}&\mbox{if $j=i$ or $k-j=i$ but not both,}\\ e^{2\l_i t}e^{-2t}x_{jk}&\mbox{if $j=i,k=2i$,}\\ e^{2\l_i t}x_{jk}&\mbox{otherwise}.}}
\end{Prop}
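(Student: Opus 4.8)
The plan is to exploit the fact that $\exp(tH_i)$ is already diagonal, which makes the Gauss decomposition appearing in the defining formula $g\cdot f(g_+)=\chi_\l(g_+g)f([g_+g]_+)$ completely explicit. Writing $d=\exp(tH_i)$ and taking a generic $g_+=u\in U_{>0}^+$ with entries $z_{ab}$, I would use the identity $ud=d\,(d\inv u d)$, in which $d\inv u d$ is again unipotent upper triangular. Hence the product $g_+g$ has trivial lower-triangular part, diagonal part exactly $d$, and upper part $[ud]_+=d\inv u d$. Feeding this back into the action gives
\Eq{\exp(tH_i)\cdot x_{jk}=\chi_\l(d)\,x_{jk}(d\inv u d),}
so the computation splits cleanly into the scalar character $\chi_\l(d)$ and the effect of the conjugation $u\mapsto d\inv u d$ on the cluster variable $x_{jk}$.

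The character factor is read off immediately from the diagonal entries of $d$ (namely $e^t$ and $e^{-t}$ in positions $i,i+1$ and $1$ elsewhere), and supplies the common prefactor $e^{2\l_i t}$ appearing in every case. The substantive ingredient is the conjugation: since it rescales each coordinate as $z_{ab}\mapsto (d_{bb}/d_{aa})z_{ab}$, and $x_{jk}$ is by definition the determinant of the minor with row set $R=\{1,\dots,j\}$ and column set $C=\{k-j+1,\dots,k\}$, each cluster variable is an eigenvector of the conjugation action. Its eigenvalue is obtained by pulling $d_{aa}\inv$ out of each row and $d_{bb}$ out of each column, giving the scaling factor $\prod_{a\in R}d_{aa}\inv\prod_{b\in C}d_{bb}$. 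The point of the argument is thus to recognize $x_{jk}$ as a torus weight vector and to compute its weight, with no determinant ever expanded.

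The last step is the bookkeeping of this weight. The exponent of $e^t$ equals $\mathbf{1}[k=i]-\mathbf{1}[k-j=i]-\mathbf{1}[j=i]$: rows $i$ and $i+1$ both lie in $R$ and cancel unless $j=i$, while among the columns $C$ only a single endpoint can equal $i$ or $i+1$, contributing $+1$ when $k=i$ and $-1$ when $k-j=i$. Evaluating over the four consistent possibilities --- $k=i$; exactly one of $\{j=i,\ k-j=i\}$; both $j=i$ and $k-j=i$ (i.e.\ $k=2i$); and none of them --- yields exponents $+1$, $-1$, $-2$, $0$, and after multiplying by $e^{2\l_i t}$ this reproduces the four cases of the statement verbatim. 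I expect the only delicate point to be the overlap case $j=i,\,k=2i$, where the row cancellation fails simultaneously with a column endpoint contribution to produce $e^{-2t}$; keeping the conditions $i\in C$ versus $i+1\in C$ straight is where a sign slip would be most likely.
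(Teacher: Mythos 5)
Your proposal is correct: the paper states this proposition without proof (it is introduced as "easily found"), and your computation --- writing $ud = d\,(d\inv u d)$ so that the Gauss decomposition is explicit, then treating $x_{jk}$ as a weight vector whose weight $\mathbf{1}[k=i]-\mathbf{1}[k-j=i]-\mathbf{1}[j=i]$ is read off from the row set $\{1,\dots,j\}$ and the contiguous column interval $\{k-j+1,\dots,k\}$ --- is exactly the straightforward verification the paper has in mind, and your case analysis (including the overlap case $j=i$, $k=2i$) checks out. The only point worth flagging is notational: with the paper's literal definition $\chi_\l(g)=\prod_m u_m^{2\l_m}$, the character factor is $e^{2(\l_i-\l_{i+1})t}$ rather than $e^{2\l_i t}$, so the stated prefactor reflects an implicit relabeling $\l_i:=\l(H_i)$; this abuse of notation is the paper's own, which you inherit rather than introduce.
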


Combining the above propositions, we obtain the action of the Lie algebra generators:

\begin{Thm} The action of $E_i,F_i$ and $H_i$ are given by
\begin{eqnarray}
E_i\cdot f&=& \sum_{j=1}^{n-i} N_{i;j}f_{j,i+j},\\
F_i\cdot f&=& -\sum_{k=i+1}^{n} x_{ik}z_{i,i+1}f_{ik}+\sum_{j=1}^{i-1} N^{i;j} f_{ji} +2z_{i,i+1}\l_1,\\
H_i\cdot f&=& \sum_{j=1}^{i-1} x_{ji}f_{ji}- \sum_{k=i+1}^{n} x_{ik}f_{ik}-\sum_{j=1}^{n-i}x_{j,i+j}f_{j,i+j}+2\l_i.
\end{eqnarray}
\end{Thm}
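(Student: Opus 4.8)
The plan is to derive all three formulas at once by differentiating the group action of the preceding subsection at $t=0$ and applying the chain rule in the cluster variables. Writing the representation as $\pi(\exp(tX))f=\chi_\l(g_+\exp(tX))\,f([g_+\exp(tX)]_+)$ and using that $g_+$ is unipotent, so that $\chi_\l(g_+)=1$ and $[g_+]_+=g_+$ at $t=0$, the Leibniz rule separates the infinitesimal action into a scalar character contribution and a geometric vector field:
\[
X\cdot f=\left(\frac{d}{dt}\Big|_{t=0}\chi_\l(g_+\exp(tX))\right)f+\sum_{j<k}\left(\frac{d}{dt}\Big|_{t=0}x_{jk}([g_+\exp(tX)]_+)\right)f_{jk}.
\]
The first term yields the additive $2\l_i$-type summands; the second is obtained by feeding the explicit coordinate transformations recorded in the preceding Propositions into the chain rule.

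For $X=E_i$ this is immediate. The element $\exp(tE_i)$ is unipotent, so the character is identically $1$ and contributes nothing, while the only cluster variables that move are $x_{j,i+j}\mapsto x_{j,i+j}+N_{i;j}t$. Differentiating gives $E_i\cdot f=\sum_{j=1}^{n-i}N_{i;j}f_{j,i+j}$ directly.

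The care --- and the step I expect to be the main obstacle --- lies in correctly separating the global character scalar from the honest coordinate transformation for $F_i$ and $H_i$, since in those Propositions the two were deliberately combined. For $F_i$ the factor $(1+z_{i,i+1}t)$ coming from the projection was folded together with the character into $x_{ik}(1+z_{i,i+1}t)^{2\l_i-1}$; one must not substitute this combined expression into the chain rule as though it were a coordinate map. Instead I would read off the genuine character $\chi_\l(g_+\exp(tF_i))=(1+z_{i,i+1}t)^{2\l_i}$, whose derivative at $0$ multiplies $f$ by $2\l_i z_{i,i+1}$, together with the true coordinate maps $x_{ik}\mapsto x_{ik}(1+z_{i,i+1}t)^{-1}$ for $j=i$ and $x_{ji}\mapsto x_{ji}+N^{i;j}t$ for $j<i$. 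Differentiating the latter gives $-\sum_{k>i}x_{ik}z_{i,i+1}f_{ik}+\sum_{j<i}N^{i;j}f_{ji}$, and adding the character term reproduces the stated formula for $F_i$.

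For $X=H_i$ the character is the scalar $e^{2\l_i t}$, contributing the additive term $2\l_i$, while the geometric part scales each cluster variable at the rate read from the $H_i$ Proposition: $+1$ when $k=i$, $-1$ when exactly one of $j=i$, $k-j=i$ holds, and $-2$ when both hold. The only delicate point is the overlap at $(j,k)=(i,2i)$, where $x_{i,2i}$ lies simultaneously in the $j=i$ sum and the $k-j=i$ sum; writing the answer as $\sum_{j<i}x_{ji}f_{ji}-\sum_{k>i}x_{ik}f_{ik}-\sum_{j=1}^{n-i}x_{j,i+j}f_{j,i+j}$ automatically assigns this variable the combined weight $-2$, so no separate bookkeeping is required. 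Together with the character term $+2\l_i$ this yields the $H_i$ formula and completes the proof.
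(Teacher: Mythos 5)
Your proof is correct and coincides with the paper's own (implicit) argument: the paper obtains the theorem simply by ``combining the above propositions,'' i.e.\ by differentiating the recorded group actions at $t=0$, and your Leibniz-rule separation of the character factor from the genuine coordinate maps---in particular unfolding the proposition's combined factor $(1+z_{i,i+1}t)^{2\l_i-1}$ into the character $(1+z_{i,i+1}t)^{2\l_i}$ times the coordinate scaling $(1+z_{i,i+1}t)^{-1}$, rather than feeding it into the chain rule as a coordinate map---is exactly the bookkeeping that makes this combination valid. Note also that your derivation produces the scalar term $2z_{i,i+1}\l_i$ in the $F_i$ formula, which confirms that the $\l_1$ in the paper's statement is a typo for $\l_i$ (consistent with the Mellin-transformed action in Theorem \ref{lieaction}).
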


Following the techniques from \cite{BFZ}, the auxillary terms $N_{i;j},N^{i;j}$ and $z_{i,i+1}$ can actually be expressed in terms of $x_{ij}$:
\begin{Prop}
We have the following expressions:
\begin{eqnarray}
a_{i,j}&=&\frac{x_{j-1,i-1}x_{j,i+1}}{x_{j-1,i}x_{j,i}},\\
z_{i,i+1}&=&\sum_{j=1}^i a_{i,j}=\sum_{j=1}^i \frac{x_{j,i+1}x_{j-1,i-1}}{x_{j,i}x_{j-1,i}},\\
N_{i;j}&=&x_{j,i+j}\sum_{k=1}^j \frac{x_{k-1,i+k}x_{k,i+k-1}}{x_{k-1,i+k-1}x_{k,i+k}},\\
N^{i;j}&=&x_{j,i}\sum_{k=1}^j a_{i,k}=x_{j,i}\sum_{k=1}^j \frac{x_{k-1,i-1}x_{k,i+1}}{x_{k-1,i}x_{k,i}},
\end{eqnarray}
where $x_{0,j}=x_{k,k}=1$.
\end{Prop}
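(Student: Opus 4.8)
The first identity, for $a_{i,j}$, is nothing but the Proposition relating $a_{ij}$ and $x_{ij}$ read backwards: it is precisely the inverse of the monomial change of variables $a\mapsto x$, so there is nothing new to prove. For the superdiagonal entry $z_{i,i+1}$ I would argue directly from the factorization $U^+_{>0}=\{\prod_l\prod_k s_{n-k}(a_{n-k,n-k-l+1})\}$ with $s_k(t)=I_n+tE_{k,k+1}$. In a product of such elementary matrices the $(i,i+1)$ entry receives contributions only from the linear terms: any product $E_{p_1,p_1+1}\cdots E_{p_m,p_m+1}$ of $m\ge 2$ of the nilpotent blocks is either zero or lands in position $(p_1,p_1+m)$, hence strictly above the first superdiagonal. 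Therefore $z_{i,i+1}$ is the sum of the parameters attached to the factors $s_i(\cdot)$, and the indexing of the factorization shows these are exactly $a_{i,1},\dots,a_{i,i}$. This gives $z_{i,i+1}=\sum_{j=1}^i a_{i,j}$, and substituting the formula for $a_{i,j}$ yields the claimed expression in cluster variables.

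For $N_{i;j}$ and $N^{i;j}$ the plan is an induction on $j$, the base cases $N_{i;1}=x_{1,i}$ and $N^{i;1}=x_{1,i+1}$ being immediate from the definitions. The inductive engine is a pair of three-term determinantal relations
\[
x_{j-1,i+j-1}\,N_{i;j}=x_{j,i+j}\,N_{i;j-1}+x_{j-1,i+j}\,x_{j,i+j-1},
\]
\[
x_{j-1,i}\,N^{i;j}=x_{j,i}\,N^{i;j-1}+x_{j-1,i-1}\,x_{j,i+1}.
\]
Granting these, one divides the first by $x_{j-1,i+j-1}x_{j,i+j}$ and the second by $x_{j-1,i}x_{j,i}$; the ratio $N_{i;j}/x_{j,i+j}$ (resp. $N^{i;j}/x_{j,i}$) then exceeds its $j-1$ predecessor by exactly the single term $\frac{x_{j-1,i+j}x_{j,i+j-1}}{x_{j-1,i+j-1}x_{j,i+j}}$ (resp. $a_{i,j}$), so the two sums telescope to the stated closed forms.

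It remains to establish the two determinantal relations, and this is where the real work lies. All six minors in each relation are \emph{flag} minors, i.e. they use the rows $1,\dots,j$ (or $1,\dots,j-1$), and their column sets are subsets of a single interval of $j+1$ consecutive columns; the $j\times j$ minors omit one column of the interval and the $(j-1)\times(j-1)$ minors drop the last row and omit two columns. Relabelling that interval as $\{0,1,\dots,j\}$, with $P_t$ the maximal minor omitting column $t$ and $Q_{s,t}$ the minor of the top $j-1$ rows omitting columns $s,t$, both relations take the incidence-Plücker form $P_1Q_{0,j}=P_0Q_{1,j}+P_jQ_{0,1}$ (resp. its column-reversed mirror $P_{j-1}Q_{0,j}=P_jQ_{0,j-1}+P_0Q_{j-1,j}$, reflecting that $N^{i;j}$ is the column reversal of $N_{i;j}$). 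I would prove this relation by Laplace-expanding each maximal minor $P_t$ along the last row and regrouping, the cancellations being exactly those of the Grassmann--Plücker/Cramer dependence among the $j+1$ columns of a $j\times(j+1)$ matrix, in the spirit of the determinantal calculus of \cite{BFZ}.

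The main obstacle is pinning down this single incidence relation with the correct alignment of indices: one must check that the gap sitting in the \emph{second} column of $N_{i;j}$ (and in the \emph{second-to-last} column of $N^{i;j}$) matches the contiguous cluster minors on the right, and that every minor produced stays top-justified, so that the outcome is genuinely a polynomial in the $x_{ij}$. Once the relation is secured the induction is purely formal, and the fact that all signs are $+$ is automatic from total positivity, which serves as a useful consistency check.
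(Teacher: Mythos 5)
Your argument is correct, but there is essentially nothing in the paper to compare it with: the authors state this Proposition without proof, remarking only that it follows ``following the techniques from \cite{BFZ}''. Your write-up supplies exactly the missing content, and the details check out. The $a_{i,j}$ formula is indeed a verbatim restatement of the earlier proposition. Your superdiagonal argument for $z_{i,i+1}$ is sound: a nonzero product $E_{p_1,p_1+1}\cdots E_{p_m,p_m+1}$ equals $E_{p_1,p_1+m}$, so only linear terms contribute to the first superdiagonal, and in the factorization of $U^+_{>0}$ the factors $s_i(\cdot)$ carry precisely the parameters $a_{i,i-l+1}$ with $1\le l\le i$ (the range restriction $k\le n-l$ forces $l\le i$). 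The two three-term relations you postulate are genuine incidence (flag) Pl\"ucker relations, and your index alignment is right: labelling the relevant column interval $c_0,\dots,c_j$ and using your $P_t$, $Q_{s,t}$ notation, one has $N_{i;j}=P_1$, $x_{j,i+j}=P_0$, $x_{j-1,i+j-1}=Q_{0,j}$, $N_{i;j-1}=Q_{1,j}$, $x_{j-1,i+j}=Q_{0,1}$, $x_{j,i+j-1}=P_j$, so the first relation is $Q_{0,j}P_1=P_0Q_{1,j}+Q_{0,1}P_j$, and the second is its mirror $Q_{0,j}P_{j-1}=P_jQ_{0,j-1}+P_0Q_{j-1,j}$; the telescoping and the base cases $N_{i;1}=x_{1,i}$, $N^{i;1}=x_{1,i+1}$ then reproduce the stated closed forms. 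The only step you leave as a sketch, the incidence relation itself, can be closed more cleanly than by Laplace expansion: writing $v_0,\dots,v_j$ for the columns of the $j\times(j+1)$ matrix and $\pi$ for deletion of the last row, apply $\pi$ to the Cramer dependence $\sum_{t=0}^{j}(-1)^tP_t\,v_t=0$ and wedge with the projections of all columns except $v_0,v_1,v_j$ (resp.\ except $v_0,v_{j-1},v_j$); only three terms survive and the signs come out exactly as claimed. Finally, the boundary degenerations you worry about are automatic: with the conventions $x_{0,j}=x_{k,k}=1$ every symbol in your relations is still an honest minor of the unipotent matrix (e.g.\ $x_{j,j}$ is the upper-left $j\times j$ determinant, which equals $1$), so the relations hold uniformly.
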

\subsection{Mellin transformed action\label{sec:Mellin}}
In the classical theory of $SL(2,\R)$, the Mellin transform is used to study the matrix coefficients, see for example \cite{Vi}. This transformation is valid because we are working with positive variables, and it enables us to express differential operators in terms of shifting operators.

Using the formal Mellin transform, we can look at the action of $U(\sl(n,\R))$ as shifting operators with scalar weights:
\begin{eqnarray*}
\del[,x]\int f(\mu)x^{u}du&=&\int (u) f(u)x^{u-1}d\mu=\int (u+1) f(u+1)x^{u}du,\\
\end{eqnarray*}
\Eq{\mbox{or }\tab\del[,x]: f(u)\mapsto (u+1)f(u+1).}
Similarly:
\begin{eqnarray}
x:f(u)&\mapsto& f(u-1),\\
x\del[,x]: f(u)&\mapsto&uf(u),\\
&etc.\nonumber&
\end{eqnarray}

Therefore according to the expression of $z_{i,i+1},N_{i;j},N^{i;j}$, each monomial term in the action of $E_i, F_i$ involves at most 4 shifting operators. We write explicitly the action under this transform as follows.

\begin{Thm}\label{lieaction} The action of $E_i, F_i$ and $H_i$ are given by:
\begin{eqnarray}
E_i \cdot f(\bu)&=&\sum_{k=1}^{n-i}\left(1+\sum_{j=k}^{n-i}u_{j,i+j}\right)f(u_{k-1,i+k-1}+1,u_{k-1,i+k}-1,u_{k,i+k-1}-1,u_{k,i+k}+1),\nonumber\\\\
F_i\cdot f(\bu)&=&\sum_{k=1}^{i}\left(1+\sum_{j=k}^{i}u_{ji}-\sum_{j=i+1}^{n}u_{ij}+2\l_i\right) f(u_{k-1,i-1}-1,u_{k-1,i}+1,u_{k,i}+1,u_{k,i+1}-1),\nonumber\\\\
H_i\cdot f(\bu)&=&\left(\sum_{j=1}^{i-1}u_{ji}-\sum_{j=i+1}^{n}u_{ij}-\sum_{j=1}^{n-i}u_{j,i+j}+2\l_i\right)f(\bu),
\end{eqnarray}
where $u_{0,i}=u_{k,k}=0$ and the shifting operators at these indices are non-existent.
\end{Thm}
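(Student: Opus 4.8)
The plan is to start from the differential-operator formulas for $E_i,F_i,H_i$ obtained above, together with the cluster-variable expressions for $z_{i,i+1}$, $N_{i;j}$ and $N^{i;j}$ from the preceding Proposition, and to apply the Mellin transform termwise. After substitution, each generator is a finite sum of terms, each of which is a Laurent monomial in the $x_{jk}$ multiplied by a single first-order operator: a bare derivative $\partial_{x_{jk}}$ for $E_i$, an Euler operator $x_{jk}\partial_{x_{jk}}$ for $H_i$, and a mixture for $F_i$. Using the three elementary rules $x:f(u)\mapsto f(u-1)$, $\partial_x:f(u)\mapsto(u+1)f(u+1)$ and $x\partial_x:f(u)\mapsto u f(u)$, every such term becomes a shift operator with a polynomial coefficient in $\bu$, and the content of the theorem is the precise bookkeeping of these shifts.

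I would dispose of $H_i$ first: each summand is a pure Euler operator $x_{jk}\partial_{x_{jk}}$, which transforms to multiplication by $u_{jk}$ with no shift, so $H_i$ acts diagonally with the stated weight. For $E_i$ I would substitute $N_{i;j}=x_{j,i+j}\sum_{k=1}^{j}\frac{x_{k-1,i+k}x_{k,i+k-1}}{x_{k-1,i+k-1}x_{k,i+k}}$ into $E_i\cdot f=\sum_j N_{i;j}\partial_{x_{j,i+j}}$ and interchange the order of summation, rewriting $\sum_{j}\sum_{k\le j}$ as $\sum_{k}\sum_{j\ge k}$. This lets me factor out, for each $k$, the common Laurent monomial $\frac{x_{k-1,i+k}x_{k,i+k-1}}{x_{k-1,i+k-1}x_{k,i+k}}$, whose Mellin image is exactly the four-variable shift $(u_{k-1,i+k-1}+1,u_{k-1,i+k}-1,u_{k,i+k-1}-1,u_{k,i+k}+1)$ appearing in the statement. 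The remaining factor is $\sum_{j\ge k}x_{j,i+j}\partial_{x_{j,i+j}}$. Here lies the one subtlety: the summand $j=k$ involves $x_{k,i+k}\partial_{x_{k,i+k}}$, and $x_{k,i+k}$ also occurs in the denominator of the factored monomial; after the cancellation $x_{k,i+k}^{-1}\cdot x_{k,i+k}\partial_{x_{k,i+k}}=\partial_{x_{k,i+k}}$, the derivative rule contributes $u_{k,i+k}+1$ rather than $u_{k,i+k}$, while the summands $j>k$ contribute $u_{j,i+j}$ unshifted. Summing yields the coefficient $1+\sum_{j=k}^{n-i}u_{j,i+j}$, as claimed.

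The case $F_i$ is the most involved. I would substitute $N^{i;j}=x_{j,i}\sum_{k=1}^{j}a_{i,k}$ and $z_{i,i+1}=\sum_{k=1}^{i}a_{i,k}$, with $a_{i,k}=\frac{x_{k-1,i-1}x_{k,i+1}}{x_{k-1,i}x_{k,i}}$, and observe that all three pieces of $F_i$ — the two derivative sums and the scalar term $2\l_i z_{i,i+1}$ — share the \emph{same} elementary monomial $a_{i,k}$, whose Mellin image is precisely the four-variable shift $(u_{k-1,i-1}-1,u_{k-1,i}+1,u_{k,i}+1,u_{k,i+1}-1)$ of the statement. Reorganizing everything by the shift index $k$, the coefficient of the $k$-th shift collects $-\sum_{j=i+1}^{n}u_{ij}$ from the first derivative sum, $\sum_{j=k}^{i-1}u_{ji}$ from the second, and $2\l_i$ from the scalar term. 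The extra $+1$ again arises from exactly one ordering coincidence per $k$: for $k<i$ it comes from the diagonal summand $j=k$ of the $N^{i;j}$-sum, where the denominator $x_{k,i}^{-1}$ of $a_{i,k}$ cancels the $x_{k,i}$ of $x_{k,i}\partial_{x_{k,i}}$, producing $u_{k,i}+1$; for $k=i$ it comes instead from the $j=i+1$ term of the first sum, where the numerator $x_{i,i+1}$ of $a_{i,i}$ \emph{doubles} the $x_{i,i+1}$ of $x_{i,i+1}\partial_{x_{i,i+1}}$, so that $-x_{i,i+1}^2 x_{i-1,i}^{-1}\partial_{x_{i,i+1}}$ transforms to the weight $1-u_{i,i+1}$. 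In either case the constant $+1$ survives, and using $u_{k,k}=0$ to merge the index ranges reproduces the stated coefficient $1+\sum_{j=k}^{i}u_{ji}-\sum_{j=i+1}^{n}u_{ij}+2\l_i$.

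The conceptual step — applying the Mellin transform termwise — is routine; the main obstacle is precisely the bookkeeping of operator orderings. Every $+1$ in a scalar weight comes from a variable coincidence between the Laurent-monomial prefactor and the differentiated variable, and the two types of coincidence behave oppositely in sign: a factor in the \emph{denominator} of the differentiated variable cancels to give weight $u+1$, whereas a factor in the \emph{numerator} doubles to give weight $1-u$ (compare $x^{-1}\cdot x\partial_x=\partial_x\mapsto u+1$ with $-x^2\partial_x\mapsto 1-u$). Keeping these straight, together with the interchange of summation orders and the boundary conventions $x_{0,j}=x_{k,k}=1$, equivalently $u_{0,j}=u_{k,k}=0$, needed to unify the index ranges, is the only delicate point of the argument.
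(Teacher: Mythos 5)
Your proposal is correct and takes essentially the same route as the paper: the paper obtains this theorem precisely by substituting the cluster-variable expressions for $z_{i,i+1}$, $N_{i;j}$, $N^{i;j}$ into the differential-operator formulas of the preceding theorem and applying the Mellin-transform rules $x\mapsto$ shift by $-1$, $\partial_x\mapsto (u+1)$ with shift $+1$, $x\partial_x\mapsto u$ termwise, leaving the bookkeeping implicit. Your detailed accounting of where each constant $+1$ arises — cancellation with a denominator variable giving weight $u+1$, versus doubling with a numerator variable giving weight $1-u$ in the $k=i$ term of $F_i$ — correctly supplies exactly the computation the paper omits.
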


Formally this formula is nothing but the shifting operator induced by polynomials in $x_{ij}$. However, when $u_{ij}$ has the appropriate real and imaginery part, Mellin transform can be carried out in the $L^2(\R)$ sense, and these operators will become positive self-adjoint operators. These observations will be studied in Section \ref{sec:slnpositive}.

\subsection{The Lie algebra relations}
The Lie algebra axioms are automatically satisfied for the action in Theorem \ref{lieaction}, since they have arised from the standard infinitesimal action for $SL(n,\R)$. However, the relations will not be guaranteed anymore when we try to quantize the above action. Hence we first directly verify these relations in this classical setting, and we will observe that the quantum case is completely analogous.

Let us introduce the following notations for the action in Theorem \ref{lieaction}:
\begin{eqnarray}
E_i\cdot f(\bu)&=&\sum_{k=1}^{n-i}E_i^k(\bu) f(\bu+\be_{E_i}^k),\\
F_i\cdot f(\bu)&=&\sum_{k=1}^{i}F_i^k(\bu) f(\bu+\be_{F_i}^k),\\
H_i\cdot f(\bu)&=&H_i(\bu)f(\bu).
\end{eqnarray}

In order to calculate the commutation relation, it is useful to introduce the commutation relation (CR) diagrams for $E_i^k, F_i^k$ and $H_i$ (see Figure 1 and 2).

\begin{figure}
\centering
\includegraphics[width=50mm]{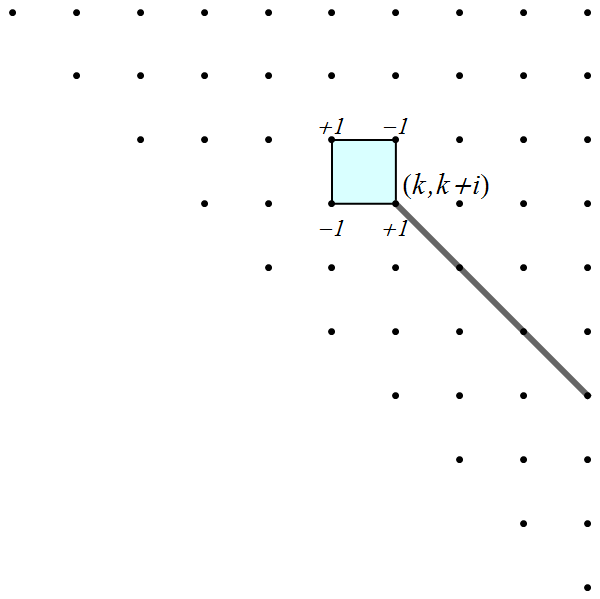}\tab\tab\tab
\includegraphics[width=50mm]{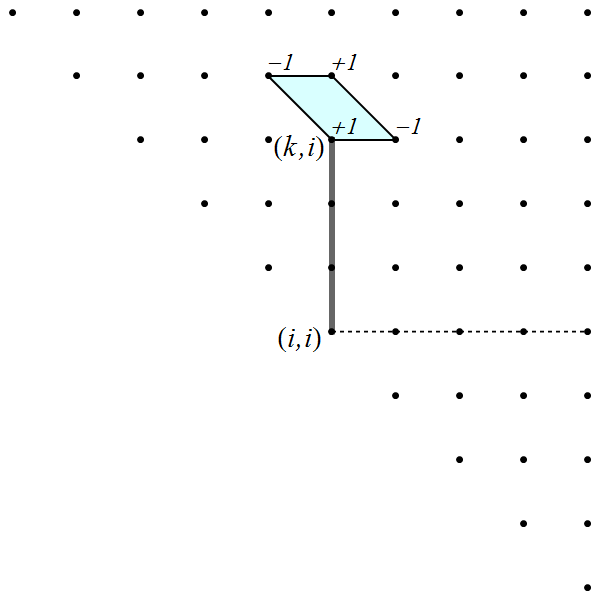}
\caption{The CR diagrams for $E_i^k$ and $F_i^k$}
\includegraphics[width=50mm]{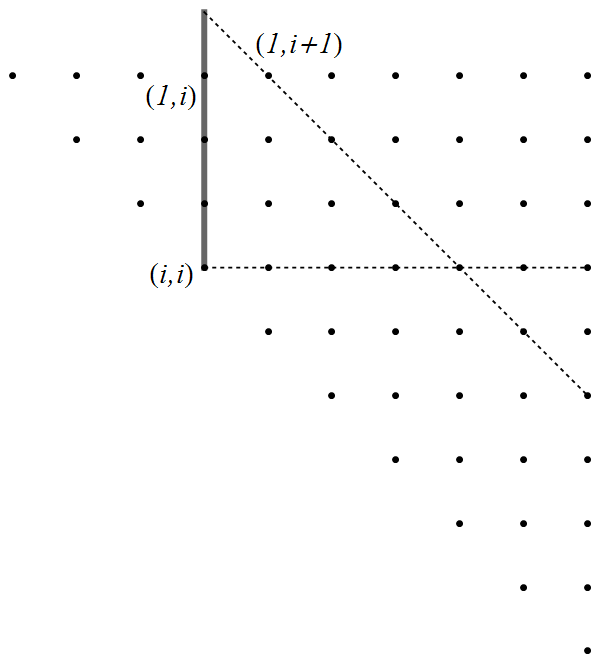}
\caption{The CR diagram for $H_i$}
\end{figure}
Here the quadrilateral encodes the shifting of the operator, and always lies within the grids, and also the "0-th row" when $k=1$. So, for example, the operator $E_i^k$ involves shifting in $u_{k,k+i},u_{k-1,k+i-1}$ by $+1$, and $u_{k-1,k+i},u_{k,k+i-1}$ by $-1$. The straight lines encode the multiplication of weights, where the solid lines indicate posiitve combinations, while the dashed lines indicate negative combinations. So for example the dashed line of $F_i^k$ means $-\sum_{j=i}^n u_{ij}$. Note that the weight is 0 (hence the term is actually not there) where the solid and dashed lines meet.

Now we can compute the commutation relation by the following equation:
\begin{Lem} \label{rel1} Let $P_i$, $i=1,2$, be the operators
\Eq{P_i\cdot f(\bu)=P_i(\bu)f(\bu+\be_i),} where $P_i(\bu)$ are linear functions. Then
\Eq{\label{cr}[P_1, P_2]\cdot f(\bu)=(P_1(\bu)P'_2(\be_1)-P_2(\bu)P'_1(\be_2))f(\bu+\be_1+\be_2),}
where $P'_i(\be_j)=P_i(\be_j)-P_i(\b0)$, i.e. it ignores the constant term in the expression of $P_i(\bu)$.
\end{Lem}
\begin{proof} Follows from linearity of $P_i(\bu)$.
\end{proof}

Note that in our case, the expression $P'_i(\be_j)$ can be found by composing the quadrilateral from the CR diagram of $P_j$ to the solid-dashed lines from the CR diagram of $P_i$, and summing up all the weights. So for example. $F_i^k(\be_{E_{i'}}^{k'})$ is given by composing the square from $E_{i'}^{k'}$ onto the lines for $F_i^k$, and it will only pick up a nonzero sum of weights when $k=k'$ and $k+i=i'$.

\begin{Lem} \label{value} We have the following values:
\begin{eqnarray}
{E'}_i^k(\be_{F_{i'}}^{k'})&=&\case{+1& (k',i')=(k,i+k)\\-1& (k',i')=(k,i+k-1)\\0&\mbox{otherwise,}}\\
{F'}_{i'}^{k'}(\be_{E_i}^k)&=&\case{+1& (k,k+i)=(k',i')\\ -1& (k,k+i)=(k',i'+1)\\0&\mbox{otherwise,}}\\
H'_i(\be_{E_{i'}}^k)&=&\case{+1&|i-i'|=1\\-2 & i=i'\\ 0& \mbox{otherwise,}}\\
H'_i(\be_{F_{i'}}^k)&=&\case{-1&|i-i'|=1 \\2 & i=i'\\ 0& \mbox{otherwise.}}
\end{eqnarray}
\end{Lem}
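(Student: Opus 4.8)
The four identities are nothing but evaluations of explicit linear functionals at explicit lattice vectors, so the plan is to substitute and to organize the index bookkeeping through the CR diagrams. First I would record the primed functionals obtained from Theorem~\ref{lieaction} by discarding the constant terms (the $1$ and the $2\l_i$), namely ${E'}_i^k(\bu)=\sum_{j=k}^{n-i}u_{j,i+j}$, ${F'}_i^k(\bu)=\sum_{j=k}^{i}u_{ji}-\sum_{j=i+1}^{n}u_{ij}$, and ${H'}_i(\bu)=\sum_{j=1}^{i-1}u_{ji}-\sum_{j=i+1}^{n}u_{ij}-\sum_{j=1}^{n-i}u_{j,i+j}$. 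Reading off Figures~1 and~2, each is supported on a union of lines in the triangular grid, with signs given by the solid ($+$) and dashed ($-$) strokes: ${E'}_i^k$ on the partial diagonal $\{(j,i+j):j\ge k\}$; ${F'}_i^k$ on the column segment $\{(j,i):k\le j\le i\}$ together with the row ray $\{(i,j):j>i\}$; and ${H'}_i$ on the column $\{(j,i):j<i\}$, the row $\{(i,j):j>i\}$, and the entire $i$-th diagonal. Each shift vector $\be_{E_{i'}}^{k'}$ or $\be_{F_{i'}}^{k'}$ is the signed indicator of the four corners of the corresponding quadrilateral, carrying $+1$ on the two solid corners and $-1$ on the two dashed corners.

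Next, evaluating a primed functional on a shift vector amounts to superimposing the relevant quadrilateral onto these support lines and summing the signed weights at the corners that meet a line, exactly the recipe stated after Lemma~\ref{rel1}. I would then run through the four cases by a short analysis of the relative position of the two labels. For ${E'}_i^k(\be_{F_{i'}}^{k'})$ the $F$-quadrilateral meets the diagonal $\{(j,i+j)\}$ in two opposite-sign corners at consecutive rows $k'-1$ and $k'$; in the interior of the ray these cancel, and an uncancelled corner survives only when the quadrilateral is anchored at the ray's starting row, which forces $k'=k$ and leaves $+1$ at $i'=i+k$ or $-1$ at $i'=i+k-1$. The computation of ${F'}_{i'}^{k'}(\be_{E_i}^k)$ is dual, the same lower-endpoint cancellation on the column segment of ${F'}_{i'}^{k'}$ producing the stated $+1$ and $-1$. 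For the diagonal identities ${H'}_i(\be_{E_{i'}}^k)$ and ${H'}_i(\be_{F_{i'}}^k)$ the $i$-th diagonal is \emph{complete} rather than a partial ray, and it moreover coincides with the $i$-th row at the single entry $u_{i,2i}$, where the two $-1$'s superimpose to a coefficient $-2$; summing the (up to three) nonzero corner contributions, possibly including this doubled entry, collapses to $-2$ when $i=i'$, to $+1$ when $|i-i'|=1$, and to $0$ otherwise, with all signs reversed for the $F$-quadrilateral.

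The step I expect to be the real obstacle is not the generic interior count but the reconciliation of the degenerate configurations with the uniform answer, governed by the conventions $u_{k,k}=u_{0,j}=0$ under which certain quadrilateral corners, or certain terms of the functional, are simply non-existent. Thus in ${F'}_{i'}^{k'}(\be_{E_i}^k)$ the sub-case $i=1$ pushes the column segment onto the vanishing entry $u_{k,k}$, so the surviving contribution must migrate to the row ray, yet the total is still $-1$; likewise in ${H'}_i(\be_{E_{i'}}^k)$ the boundary value $k=1$ kills a diagonal corner sitting on the $0$-th row, and a column corner takes its place, again preserving $-2$. Verifying that every such degeneration lands on an already-discarded zero entry and hence never spoils the clean value is the one place demanding genuine care, and it is precisely what the pictorial method makes transparent, since a corner falling outside the grid is drawn as contributing nothing. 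I would therefore present each of the four identities by exhibiting the relevant superposition of quadrilateral and support lines and reading off the one (or two) uncancelled weights.
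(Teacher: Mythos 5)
Your proposal is correct and takes essentially the same route as the paper: the paper's entire proof is ``direct inspection of the CR diagrams,'' i.e.\ superimposing each shift quadrilateral onto the support lines of the constant-free linear functionals and summing the signed corner weights, which is precisely the recipe you execute. Your explicit treatment of the degenerate configurations governed by the conventions $u_{k,k}=u_{0,j}=0$ (the cases $k=1$, $i=1$, $k'=i'$, etc.) is exactly the checking the paper leaves implicit, and the values you obtain agree with the lemma in all cases.
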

\begin{proof} Follows from a direct inspection of the CR diagrams.
\end{proof}

\begin{Prop} We have
\begin{eqnarray}
[H_i,E_j]&=&a_{ij}E_j,\\
{[H_i,F_j]}&=&-a_{ij}F_j,
\end{eqnarray}
where $a_{ij}=\case{2&i=j\\-1&|i-j|=1\\0&\mbox{otherwise}}$ is the Cartan Matrix.
\end{Prop}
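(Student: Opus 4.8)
The plan is to reduce everything to the single-term commutator formula of Lemma \ref{rel1} and then read off the relevant weight increments from the CR diagrams via Lemma \ref{value}. Since $E_j\cdot f(\bu) = \sum_{k=1}^{n-j} E_j^k(\bu) f(\bu+\be_{E_j}^k)$ is a finite sum of operators each shifting by a single vector $\be_{E_j}^k$, while $H_i\cdot f(\bu) = H_i(\bu) f(\bu)$ is a single diagonal operator with zero shift $\be_{H_i}=\b0$, bilinearity of the commutator gives $[H_i,E_j] = \sum_k [H_i,E_j^k]$. First I would apply Lemma \ref{rel1} to each pair $P_1 = H_i$, $P_2 = E_j^k$, noting that both $H_i(\bu)$ and $E_j^k(\bu)$ are affine-linear in $\bu$, as the lemma requires.

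The decisive simplification is that $H_i$ carries zero shift. In the formula
\[
[H_i,E_j^k]\cdot f(\bu) = \bigl(H_i(\bu)\,{E_j^k}'(\be_{H_i}) - E_j^k(\bu)\,H'_i(\be_{E_j}^k)\bigr)f(\bu+\be_{E_j}^k),
\]
the first term contains ${E_j^k}'(\b0) = E_j^k(\b0)-E_j^k(\b0)=0$ and so drops out entirely. What remains is $[H_i,E_j^k]\cdot f(\bu) = -H'_i(\be_{E_j}^k)\,E_j^k(\bu) f(\bu+\be_{E_j}^k)$, i.e.\ each piece of $E_j$ is merely rescaled by the scalar $-H'_i(\be_{E_j}^k)$. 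By Lemma \ref{value} this scalar equals $-(-2)=2$ when $i=j$, equals $-(+1)=-1$ when $|i-j|=1$, and vanishes otherwise; that is, $-H'_i(\be_{E_j}^k)=a_{ij}$, independently of $k$. Hence the common factor $a_{ij}$ pulls out of the sum and $[H_i,E_j] = a_{ij}\sum_k E_j^k(\bu) f(\bu+\be_{E_j}^k) = a_{ij}E_j$. The computation for $F_j$ is identical in structure, using $H'_i(\be_{F_j}^k)$ from Lemma \ref{value}; there $-H'_i(\be_{F_j}^k) = -a_{ij}$, which yields $[H_i,F_j] = -a_{ij}F_j$.

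There is essentially no analytic obstacle here: the argument is the bookkeeping already packaged in Lemmas \ref{rel1} and \ref{value}, and the proposition is really a corollary of them. The one point I expect to require explicit care, and which makes the clean resummation possible, is that $H'_i(\be_{E_j}^k)$ does not depend on the summation index $k$ — geometrically, that translating the quadrilateral of $E_j^k$ along the $i$-th solid/dashed lines of the $H_i$ diagram produces the same net weight regardless of the row $k$. This $k$-independence is exactly the content of the diagram inspection in Lemma \ref{value}, so confirming it amounts to checking that the two ends of the $E_j^k$ shift straddle the $H_i$ lines in the same way for every $k$. Once that is in hand the result follows immediately.
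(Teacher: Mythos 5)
Your proof is correct and is essentially the paper's own argument: you apply Lemma \ref{rel1} with the shiftless operator $H_i$ as one factor, so that the ${E_j^k}'(\b0)$ term vanishes and each component $E_j^k$ is merely rescaled by $-H'_i(\be_{E_j}^k)$, then read off this scalar as $a_{ij}$ (resp.\ $-a_{ij}$ for $F_j$) from Lemma \ref{value} and sum over $k$. The only cosmetic difference is that you spell out the $k$-independence of the scalar, which the paper leaves implicit in its appeal to Lemma \ref{value}.
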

\begin{proof}
Since $H_i$ does not have a shift, by Lemma \ref{rel1} we have
$$[H_i, E_{i'}^k] = -E_{i'}^k(\bu) H'_i(\be_{E_{i'}}^k)f(\bu+\be_{E_{i'}}) = -H'_i(\be_{E_{i'}}^k)E_{i'}^k,$$
for all $k$. Hence summing up $k$ gives the required relation by Lemma \ref{value}. Similarly for $F_i$.
\end{proof}

\begin{Prop} We have
\Eq{[E_i,F_{i'}]=\d_{ii'}H_i.}
\end{Prop}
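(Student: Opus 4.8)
The plan is to use the commutator formula from Lemma \ref{rel1} together with the tabulated values in Lemma \ref{value}, exactly as in the preceding two Propositions. Writing $E_i \cdot f(\bu) = \sum_k E_i^k(\bu) f(\bu + \be_{E_i}^k)$ and $F_{i'} \cdot f(\bu) = \sum_{k'} F_{i'}^{k'}(\bu) f(\bu + \be_{F_{i'}}^{k'})$, I would apply \eqref{cr} to each pair $(E_i^k, F_{i'}^{k'})$ and sum, obtaining
\Eq{[E_i, F_{i'}] \cdot f(\bu) = \sum_{k,k'}\Big( E_i^k(\bu)\, {F'}_{i'}^{k'}(\be_{E_i}^k) - F_{i'}^{k'}(\bu)\, {E'}_i^k(\be_{F_{i'}}^{k'})\Big) f(\bu + \be_{E_i}^k + \be_{F_{i'}}^{k'}).}
The first observation is that, by Lemma \ref{value}, the coefficients ${F'}_{i'}^{k'}(\be_{E_i}^k)$ and ${E'}_i^k(\be_{F_{i'}}^{k'})$ vanish unless the index conditions relating $(k, i)$ and $(k', i')$ are met. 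When $i \neq i'$ one checks from those index conditions that the surviving shift vectors $\be_{E_i}^k + \be_{F_{i'}}^{k'}$ for nonzero ${F'}_{i'}^{k'}(\be_{E_i}^k)$ never coincide with those for nonzero ${E'}_i^k(\be_{F_{i'}}^{k'})$, and moreover each individual term must still cancel so that the total vanishes; this is the $\delta_{ii'}$ part, and I expect it to follow by matching shift vectors against the CR diagrams.

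The substantive case is $i = i'$. Here I would first identify, using Lemma \ref{value}, precisely which pairs $(k,k')$ contribute: ${F'}_i^{k'}(\be_{E_i}^k)$ is $+1$ when $(k,k+i)=(k',i)$, i.e. $k'=k$ and $i = $ (forcing a boundary/degenerate match) and $-1$ when $(k,k+i)=(k',i+1)$, while ${E'}_i^k(\be_{F_i}^{k'})$ is $+1$ when $(k',i')=(k,i+k)$ and $-1$ when $(k',i')=(k,i+k-1)$. The key point is that the nonzero contributions pair up so that the shift vectors $\be_{E_i}^k + \be_{F_i}^{k'}$ telescope: the $+1$ term at one value of $k$ shares a shift vector with the $-1$ term at an adjacent value, and the prefactors $E_i^k(\bu)$ and $F_i^{k'}(\bu)$ combine into the linear weight functions appearing in $H_i(\bu)$. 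After this telescoping I would verify that the surviving diagonal ($\be = \b0$) term reproduces exactly
\Eq{H_i \cdot f(\bu) = \Big(\sum_{j=1}^{i-1}u_{ji}-\sum_{j=i+1}^{n}u_{ij}-\sum_{j=1}^{n-i}u_{j,i+j}+2\l_i\Big) f(\bu),}
matching the signs and ranges via the solid/dashed lines of the $H_i$ diagram.

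The main obstacle I anticipate is bookkeeping at the boundary indices, where the conventions $u_{0,i} = u_{k,k} = 0$ and the nonexistence of shifting operators at degenerate indices must be handled carefully: the telescoping sum will have endpoint terms (at $k=1$ and $k = n-i$ or $k=i$) whose survival or cancellation determines both the constant $2\l_i$ and the correct truncation of the weight sums. I would therefore organize the $i=i'$ computation around the CR diagrams, reading off each contributing quadrilateral-on-lines composition, and check the endpoints separately against the degeneracy conventions. Given that the ambient Lie algebra relations are guaranteed abstractly (as noted before Theorem \ref{lieaction}), the verification is really a consistency check that the diagrammatic calculus correctly encodes the bracket; the risk is purely combinatorial, not conceptual.
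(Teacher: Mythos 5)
Your overall framework (apply Lemma \ref{rel1} to each pair $(E_i^k,F_{i'}^{k'})$ and locate the nonzero coefficients via Lemma \ref{value}) is the same as the paper's, but the cancellation mechanisms you anticipate are wrong in both cases, and in a way that would stall the computation. First, you assert that for $i\neq i'$ the shift vectors supporting nonzero ${F'}_{i'}^{k'}(\be_{E_i}^k)$ and nonzero ${E'}_i^k(\be_{F_{i'}}^{k'})$ "never coincide" and that "each individual term must still cancel." In fact the two support conditions in Lemma \ref{value} are literally identical, so ${E'}_i^k(\be_{F_{i'}}^{k'})={F'}_{i'}^{k'}(\be_{E_i}^k)$ \emph{identically}; this is the paper's first observation, and it collapses each term to $\bigl(E_i^k(\bu)-F_{i'}^{k'}(\bu)\bigr){E'}_i^k(\be_{F_{i'}}^{k'})\,f(\bu+\be_{E_i}^k+\be_{F_{i'}}^{k'})$. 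Termwise vanishing then happens only for $i>i'$. For $i<i'$ exactly two pairs survive, $(k,k')=(i'-i,i'-i)$ with coefficient $+1$ and $(k,k')=(i'-i+1,i'-i+1)$ with coefficient $-1$; they carry the \emph{same} total shift vector, and the whole substantive step of the proof is the verification that
\Eq{E_i^{i'-i}(\bu)-F_{i'}^{i'-i}(\bu)=E_i^{i'-i+1}(\bu)-F_{i'}^{i'-i+1}(\bu),}
which holds because the $j=i'-i$ terms of the two weight sums are both $u_{i'-i,\,i'}$ and drop out of the difference. Only then do the $+1$ and $-1$ contributions cancel against each other. This pairwise cancellation, which your proposal rules out by design, is the actual content of the $\d_{ii'}$ vanishing.

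Your picture of the case $i=i'$ is also inverted: there is no telescoping. With $i'=i$, the $+1$ condition in Lemma \ref{value} forces $k=0$, which is impossible, so the \emph{only} surviving pair is $(k,k')=(1,1)$, with coefficient $-1$ and total shift $\be_{E_i}^1+\be_{F_i}^1=\b0$. The single resulting term $-\bigl(E_i^1(\bu)-F_i^1(\bu)\bigr)f(\bu)$ already equals $H_i(\bu)f(\bu)$, since the constants cancel and $u_{ii}=0$; in particular the $2\l_i$ in $H_i$ comes directly from the weight $F_i^1(\bu)$, not from endpoint terms of a telescoping sum, and none of the boundary bookkeeping you anticipate arises. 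In short, you have swapped the two mechanisms: pairwise cancellation of equal prefactors belongs to $i<i'$, and the single-term identification belongs to $i=i'$. Carrying out your plan as written would fail at both points until these corrections are discovered.
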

\begin{proof} By Lemma \ref{value}, we observe that $E_i^k(\be_{F_{i'}}^{k'})=F_{i'}^{k'}(\be_{E_i}^k)$ identically for all $i,i',k,k'$. Hence by Lemma \ref{rel1}, the relation reduces to:
$$[E_i^k,F_{i'}^{k'}]\cdot f(\bu)=(E_i^k(\bu)-F_{i'}^{k'}(\bu)) E_i^k(\be_{F_{i'}}^{k'})f(\bu+\be_{E_i}^k+\be_{F_{i'}}^{k'}).$$
When $i>i'$, it is clear that $E_i^k(\be_{F_{i'}}^{k'})=0$ because $k+i>i$, so the nonzero cases can never be satisfied.
When $i<i'$, only $k=k'$ gives nonzero value of $E_i^k(\be_{F_{i'}}^{k'})=\d_{i',k+i}-\d_{i',k+i-1}$. Hence we just need to consider the two terms for $k=i'-i$ and $k=i'-i+1$. Next we note that
$$\be=\be_{E_{i}}^{i'-i}+\be_{F_{i'}}^{i'-i}=\be_{E_{i}}^{i'-i+1}+\be_{F_{i'}}^{i'-i+1},$$
$$f(\bu+\be)= f(u_{i'-i,i'-1}-1,u_{i'-i,i'}+2,u_{i'-i,i'+1}-1),$$
therefore all we need to take care of is the factor. We have
\begin{eqnarray*}
E_i^{i'-i}(\bu)-F_{i'}^{i'-i}(\bu)
&=&\sum_{j=i'-i}^{n-i}u_{j,i+j}-\sum_{j'=i'-i}^{i'}u_{j'i'}+\sum_{j'=i'+1}^{n}u_{i'j'}+2\l_{i'}\\
&=&\sum_{j=i'-i+1}^{n-i}u_{j,i+j}-\sum_{j'=i'-i+1}^{i'}u_{j'i'}+\sum_{j'=i'+1}^{n}u_{i'j'}+2\l_{i'}\\
&=&E_i^{i'-i+1}(\bu)-F_{i'}^{i'-i+1}(\bu),
\end{eqnarray*}
hence we conclude that the factor equals:
\begin{eqnarray*}
&&\sum_{k=i'-i}^{i'-i+1}(E_i^k(\bu)-F_{i'}^{k}(\bu))E_i^k(\be_{F_{i'}}^{k})\\
&=&\sum_{k=i'-i}^{i'-i+1}(E_i^k(\bu)-F_{i'}^{k}(\bu))(\d_{i',k+i}-\d_{i',k+i-1})\\
&=&(E_i^{i'-i}(\bu)-F_{i'}^{i'-i}(\bu))-(E_i^{i'-i+1}(\bu)-F_{i'}^{i'-i+1}(\bu))\\
&=&0.
\end{eqnarray*}
Finally, when $i=i'$, $E_i^k(\be_{F_{i}}^{k'})=0$ unless $k=k'$ and $i=i+k-1$, i.e. $k=k'=1$ which gives $E_i^1(\be_{F_{i}}^{1})=-1$. Furthermore, $\be_{E_i}^1+\be_{F_i}^1=\b0$. Hence we just need to  calculate
\begin{eqnarray*}
[E_{i}^1,F_{i'}^1]&=&(E_i^1(\bu)-F_i^1(\bu))(-1)\\
&=&\sum_{j=1}^{i-1}u_{ji}-\sum_{j=i+1}^{n}u_{ij}-\sum_{j=1}^{n-i}u_{j,i+j}+2\l_i\\
&=&H_i.
\end{eqnarray*}
\end{proof}

Next we verify the Serre relations. Since they involve relations in $E$ only, or $F$ only, we calculate the $P'(\be)$ functions with respect to the shifts within $E_i$ and $F_i$, respectively.

\begin{Lem}\label{value2} We have the following values:

We have for $E_i$:
\begin{eqnarray}
E_i^k(\be_{E_i'}^{k'})=\left\{\begin{array}{cc}0&k>k'\\1&k=k'\\2&k<k',\end{array}\right.\\
E_i^k(\be_{E_{i+1}}^{k'})=\left\{\begin{array}{cc}0&k>k'\\-1&k\leq k',\end{array}\right.\\
E_{i+1}^k(\be_{E_i}^{k'})=\left\{\begin{array}{cc}0&k\geq k'\\-1&k< k'.\end{array}\right.
\end{eqnarray}
We have similarly for $F_i$:
\begin{eqnarray}
F_i^k(\be_{F_i}^{k'})=\left\{\begin{array}{cc}0&k>k'\\1&k=k'\\2&k<k',\end{array}\right.\\
F_i^k(\be_{F_{i+1}}^{k'})=\left\{\begin{array}{cc}0&k\geq k'\\-1&k< k',\end{array}\right.\\
F_{i+1}^k(\be_{F_i}^{k'})=\left\{\begin{array}{cc}0&k> k'\\-1&k\leq k'.\end{array}\right.
\end{eqnarray}
\end{Lem}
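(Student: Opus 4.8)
The final statement (Lemma \ref{value2}) records the values $E_i^k(\be_{E_i}^{k'})$, $E_i^k(\be_{E_{i+1}}^{k'})$, $E_{i+1}^k(\be_{E_i}^{k'})$, and their $F$-analogues, where $P_i'(\be_j) = P_i(\be_j) - P_i(\b0)$ strips the constant term. The plan is to read each value directly off the explicit shift vectors $\be_{E_i}^k$ and the coefficient functions appearing in Theorem \ref{lieaction}, exactly in the spirit of the one-line proof of Lemma \ref{value}. Recall from the displayed formula for $E_i\cdot f(\bu)$ that the shift associated to $E_i^k$ is
\Eq{\be_{E_i}^k = (u_{k-1,i+k-1}{+}1,\; u_{k-1,i+k}{-}1,\; u_{k,i+k-1}{-}1,\; u_{k,i+k}{+}1),}
and the (nonconstant part of the) coefficient of $E_i^k$ is the linear form $\sum_{j=k}^{n-i} u_{j,i+j}$, a sum running along the $i$-th diagonal from row $k$ to the bottom.

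First I would treat the diagonal case $E_i^k(\be_{E_i}^{k'})$. Here one substitutes the four unit shifts of $\be_{E_i}^{k'}$ into the linear form $\sum_{j=k}^{n-i} u_{j,i+j}$ of $E_i^k$. The two entries of $\be_{E_i}^{k'}$ lying on diagonal $i$ are $u_{k'-1,i+k'-1}$ (coefficient $+1$) and $u_{k',i+k'}$ (coefficient $+1$); the other two entries lie on diagonals $i-1$ and $i+1$ and contribute nothing. The form $\sum_{j=k}^{n-i}u_{j,i+j}$ picks up the term $u_{k',i+k'}$ iff $k'\ge k$ and picks up $u_{k'-1,i+k'-1}$ iff $k'-1\ge k$. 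Reading off the three regimes: if $k>k'$ neither index is counted, giving $0$; if $k=k'$ only $u_{k',i+k'}$ is counted, giving $1$; if $k<k'$ both are counted, giving $2$. This reproduces the stated diagonal values, and the same bookkeeping with the $F$-coefficient form $\sum_{j=k}^{i} u_{ji}$ and shift $\be_{F_i}^{k'}$ yields the identical $F_i^k(\be_{F_i}^{k'})$ table.

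For the off-diagonal cases $E_i^k(\be_{E_{i+1}}^{k'})$ and $E_{i+1}^k(\be_{E_i}^{k'})$, the key point is that a shift belonging to diagonal $i+1$ has entries that overlap diagonal $i$ in exactly one slot (with coefficient $-1$), so only a single term of the linear form can ever be hit, forcing each value to be $0$ or $-1$. Concretely, for $E_i^k(\be_{E_{i+1}}^{k'})$ one checks which of the two $-1$-entries of $\be_{E_{i+1}}^{k'}$ sitting on diagonal $i$ falls in the summation range $j\ge k$ of $E_i^k$, yielding $-1$ when $k\le k'$ and $0$ when $k>k'$; the asymmetry between the two off-diagonal tables (strict versus non-strict inequalities) arises simply because the two inner shifts of $\be_{E_{i+1}}^{k'}$ sit at rows $k'-1$ and $k'$ respectively, so swapping the roles of $i$ and $i+1$ shifts the threshold by one. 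I would present this as a short index comparison keyed to the CR diagrams of Figure 1, noting that composing the quadrilateral of one operator onto the solid/dashed lines of the other—as already explained after Lemma \ref{value}—makes the overlaps visually immediate.

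I do not expect a genuine obstacle here: the entire lemma is a finite, mechanical evaluation of linear forms at unit shift vectors, and the only thing requiring care is the off-by-one bookkeeping that distinguishes the strict from the non-strict inequalities in the off-diagonal entries. The cleanest exposition is therefore to state the general principle once (\emph{each value is obtained by summing, over the entries of $\be$ lying on the relevant diagonal and within the summation range of $P$, the product of the two $\pm1$ coefficients}) and then let the reader verify the six tables by inspection of the CR diagrams, exactly as the authors do for Lemma \ref{value}.
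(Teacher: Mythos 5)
Your strategy --- evaluating each operator's linear weight form on the explicit shift vectors read off from Theorem \ref{lieaction} --- is essentially the paper's own proof, which is a single sentence asserting that the values follow by direct inspection of how the square/parallelogram of one CR diagram superimposes on the solid and dashed lines of the other. Your three $E$-tables are computed correctly, including the right diagnosis of the strict versus non-strict inequalities: the unique overlapping entry sits at row $k'$ in one off-diagonal case and at row $k'-1$ in the other.

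There is, however, one concrete slip in the $F$ part. You propose to run ``the same bookkeeping'' using only the column form $\sum_{j=k}^{i}u_{ji}$, but the weight of $F_i^k$ is $1+\sum_{j=k}^{i}u_{ji}-\sum_{j=i+1}^{n}u_{ij}+2\l_i$, and the dashed (negative) part is indispensable at the boundary. Take $k=k'=i$ in $F_i^k(\be_{F_i}^{k'})$: the shift $\be_{F_i}^{i}$ has its $+1$ entries at $(i-1,i)$ and at $(i,i)$, the latter non-existent by the convention $u_{k,k}=0$, while the column sum of $F_i^i$ reduces to the single vanishing term $u_{ii}$; so column bookkeeping alone yields $0$, not the tabulated $1$. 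The correct value arises because the dashed line meets the $-1$ entry of the shift at $(i,i+1)$, contributing $(-1)\cdot(-1)=+1$. Likewise $F_i^k(\be_{F_{i+1}}^{k'})=-1$ for $k'=i+1$ is produced entirely by the dashed line hitting the $+1$ entry at $(i,i+1)$, since all column-$i$ entries of that shift are non-existent. These boundary values do occur in the Serre-relation computation (where $a=1$ is supposed to characterize $k=k'$, and $k''=i+1$ is allowed), so the dashed contribution cannot be dropped. The fix is immediate and stays within your framework: apply your own closing principle to the full solid-plus-dashed support of the $F$ weights --- i.e., to the complete CR diagram of $F_i^k$, exactly as the paper's one-line proof prescribes --- and all six tables come out as stated.
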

\begin{proof} This again follows from a direct inspection of the diagrams: how square superimposes on the diagonal line, and how parallelgram superimposes on the solid-dashed lines.
\end{proof}

\begin{Cor} When $|i-j|\geq 2$, $[E_i,E_j]=[F_i,F_j]=0$.
\end{Cor}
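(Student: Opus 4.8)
The plan is to combine the commutation formula of Lemma \ref{rel1} with the geometry of the CR diagrams. First I would expand
$$[E_i,E_j]\cdot f(\bu)=\sum_{k=1}^{n-i}\sum_{k'=1}^{n-j}[E_i^k,E_j^{k'}]\cdot f(\bu),$$
and apply Lemma \ref{rel1} to each summand, giving
$$[E_i^k,E_j^{k'}]\cdot f(\bu)=\big(E_i^k(\bu)\,{E_j^{k'}}'(\be_{E_i}^k)-E_j^{k'}(\bu)\,{E_i^k}'(\be_{E_j}^{k'})\big)f(\bu+\be_{E_i}^k+\be_{E_j}^{k'}).$$
Thus it suffices to prove that the two cross-evaluations ${E_j^{k'}}'(\be_{E_i}^k)$ and ${E_i^k}'(\be_{E_j}^{k'})$ both vanish for every $k,k'$ when $|i-j|\geq2$, and symmetrically for $F$.

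For the $E$ generators this is immediate from the diagram. The weight $E_i^k(\bu)=1+\sum_{m\geq k}u_{m,i+m}$ is supported entirely on the $i$-th diagonal $\{u_{m,m+i}\}$, while the quadrilateral $\be_{E_j}^{k'}$ has its four corners on $u_{k',k'+j},u_{k'-1,k'+j-1}$ (diagonal $j$), $u_{k'-1,k'+j}$ (diagonal $j+1$) and $u_{k',k'+j-1}$ (diagonal $j-1$). Hence the support of the shift meets the $i$-th diagonal only if $i\in\{j-1,j,j+1\}$. For $|i-j|\geq2$ the two supports are disjoint, so ${E_i^k}'(\be_{E_j}^{k'})=0$, and by the same argument with the roles reversed ${E_j^{k'}}'(\be_{E_i}^k)=0$. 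Lemma \ref{rel1} then gives $[E_i^k,E_j^{k'}]=0$ for every $k,k'$, and summing yields $[E_i,E_j]=0$.

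For the $F$ generators the same reduction applies, but the weight $F_j^k$ is now supported on the vertical column-$j$ line $\{u_{mj}\}_{m\leq j}$ (solid, weight $+1$) together with the horizontal row-$j$ line $\{u_{jm}\}_{m>j}$ (dashed, weight $-1$), rather than on a single diagonal. The four corners of $\be_{F_i}^{k'}$ lie in columns $i-1,i,i+1$ and in rows $k'-1,k'$, carrying the shift signs $-1,+1$ in row $k'-1$ and $+1,-1$ in row $k'$. When $|i-j|\geq2$ none of the columns $i-1,i,i+1$ equals $j$, so the vertical line is never met; the horizontal line can be met only if $k'-1=j$ or $k'=j$, and in either case exactly two corners of the quadrilateral sit in row $j$ with opposite shift signs. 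Since $|i-j|\geq2$ forces both of these corners onto the same side of column $j$, they are either both off the dashed line (zero weight) or both on it where the weight is constant; in both situations their contributions cancel, so ${F_j^k}'(\be_{F_i}^{k'})=0$, and likewise with $i$ and $j$ interchanged. Lemma \ref{rel1} again gives $[F_i,F_j]=0$.

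The only genuine subtlety, and the step I would verify most carefully, is this pairwise cancellation in the $F$ case: one must check that whenever a corner of $\be_{F_i}^{k'}$ lands on the uniformly weighted dashed row-$j$ line of $F_j^k$, its partner corner in the same row also lies on that line and carries the opposite sign, so that no net weight survives. This is precisely the geometric mechanism already recorded for the adjacent case $|i-j|=1$ in Lemma \ref{value2}, and the identical inspection of how the quadrilateral superimposes on the solid–dashed lines extends it to $|i-j|\geq2$.
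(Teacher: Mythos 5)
Your proof is correct, and in outline it is the paper's own strategy: expand the commutator into components, apply Lemma \ref{rel1}, and kill both cross-evaluations by inspecting how each quadrilateral sits against the other operator's weight lines. The noteworthy difference is that your treatment of the $F$ case is more careful than the paper's. The paper disposes of the whole corollary with the single remark that ``the square or parallelogram never touches the lines,'' and for the $E$ generators (and for the evaluation ${F_i^k}'(\be_{F_j}^{k'})$ when $i\geq j+2$, whose parallelogram has rows $\leq j\leq i-2$ and columns $\leq j+1<i$) this disjointness is literally what happens. But it is not true for the remaining evaluation: when $i\geq j+2$ and $k\in\{j,j+1\}$, the parallelogram $\be_{F_i}^{k}$ \emph{does} meet the dashed row-$j$ line of $F_j^{k'}$ --- e.g.\ for $k=j$ its corners $u_{j,i}\mapsto +1$ and $u_{j,i+1}\mapsto -1$ both lie on it --- and the vanishing of ${F_j^{k'}}'(\be_{F_i}^{k})$ comes from the cancellation of these two opposite-sign corners against the constant weight $-1$ carried by the dashed line, which is exactly the mechanism you isolate as the ``genuine subtlety.'' This is also the mechanism implicitly at work in the entries of Lemma \ref{value2} (for instance in $F_i^k(\be_{F_{i+1}}^{k'})$ with $k'=i$, the two row-$i$ corners cancel and only the solid-line corner survives). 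So your proof and the paper's take the same route, but yours supplies the cancellation step that the paper's one-line justification silently skips, and without which the $F$ half of the statement is not actually established.
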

\begin{proof} The square or parallelgram never touches the lines, so all the $P_i(\be_j)$'s are zero in the commutation relation \eqref{cr}.
\end{proof}

\begin{Lem}
We have the Serre relations
\Eq{\left\{\begin{array}{c}E_iE_iE_{i+1}-2E_iE_{i+1}E_i+E_{i+1}E_iE_i=0,\\F_iF_iF_{i+1}-2F_iF_{i+1}F_i+F_{i+1}F_iF_i=0,\end{array}\right.}

\Eq{\left\{\begin{array}{c}E_{i+1}E_{i+1}E_i-2E_{i+1}E_iE_{i+1}+E_iE_{i+1}E_{i+1}=0,\\F_{i+1}F_{i+1}F_i-2F_{i+1}F_iF_{i+1}+F_iF_{i+1}F_{i+1}=0.\end{array}\right.}
\end{Lem}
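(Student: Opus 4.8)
Although these relations are guaranteed a priori by the Lie-theoretic origin of the action in Theorem~\ref{lieaction}, the aim is a self-contained combinatorial verification that will transfer verbatim to the quantum case. The plan is to read each Serre relation as a \emph{double commutator} and then evaluate it by applying Lemma~\ref{rel1} twice. Indeed, since
\[
[E_i,[E_i,E_{i+1}]]=E_iE_iE_{i+1}-2E_iE_{i+1}E_i+E_{i+1}E_iE_i,
\]
the first relation is exactly $[E_i,[E_i,E_{i+1}]]=0$, the relation in the second display is $[E_{i+1},[E_{i+1},E_i]]=0$, and the two $F$-relations are the analogous double commutators of the $F$'s. By the parallel structure of the tables in Lemmas~\ref{value} and~\ref{value2} under $i\leftrightarrow i+1$ and under $E\leftrightarrow F$, all four statements reduce to one master computation, which I take to be $[E_i,[E_i,E_{i+1}]]=0$.

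The structural observation that makes the iteration legitimate is that Lemma~\ref{rel1} sends two single-shift operators with affine coefficients to a single-shift operator whose coefficient is again affine: in
\[
[P_1,P_2]\cdot f(\bu)=\bigl(P_1(\bu)P_2'(\be_1)-P_2(\bu)P_1'(\be_2)\bigr)f(\bu+\be_1+\be_2),
\]
the scalars $P_2'(\be_1),P_1'(\be_2)$ are constants, so the output coefficient is linear. Hence this class of operators is closed under the Lie bracket and iterated brackets can be computed termwise. First I would compute the inner commutator $D:=[E_i,E_{i+1}]=\sum_{k,l}[E_i^k,E_{i+1}^l]$, where by Lemma~\ref{rel1}
\[
[E_i^k,E_{i+1}^l]\cdot f(\bu)=\bigl(E_i^k(\bu)\,{E'}_{i+1}^l(\be_{E_i}^k)-E_{i+1}^l(\bu)\,{E'}_i^k(\be_{E_{i+1}}^l)\bigr)f(\bu+\be_{E_i}^k+\be_{E_{i+1}}^l),
\]
and the two scalar increments are precisely the step functions tabulated in Lemma~\ref{value2}. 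This presents $D$ as an explicit finite sum of single-shift operators indexed by $(k,l)$, to each of which Lemma~\ref{rel1} applies a second time when forming $[E_i,D]=\sum_{k''}[E_i^{k''},D_{k,l}]$.

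The decisive step will be the cancellation in this outer commutator. I would collect the resulting terms according to their total shift $\be_{E_i}^{k''}+\be_{E_i}^{k}+\be_{E_{i+1}}^{l}$, which is symmetric in the two $E_i$-indices, and show that for each fixed total shift the accumulated affine coefficient vanishes identically in $\bu$. Two features conspire here: the coefficients $E_i^k(\bu)$ and $E_{i+1}^l(\bu)$ are affine, so no genuinely quadratic contribution is ever created, and the increments from Lemma~\ref{value2} take only the values $0,\pm1,2$ according to the ordering of $k,k',l$. The telescoping these step functions induce over the summation ranges forces both the linear and the constant parts to cancel in matched pairs; pictorially, this is the statement that superimposing the quadrilateral of one $E$-shift onto the solid--dashed lines of the other, in the antisymmetrized triple sum, produces weights that annihilate.

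The hard part will be purely the bookkeeping of this triple sum: for each ordered triple of indices and each of the three orderings appearing in the double commutator one must identify which step-function value is picked up, and then check cancellation at the boundary cases $k=k'$, $k=l$, and the $k=1$ ``$0$-th row'' convention, where the step functions of Lemma~\ref{value2} jump. Once $[E_i,[E_i,E_{i+1}]]=0$ is verified in this way, the remaining three relations follow from the parallel entries of the tabulated increments under $i\leftrightarrow i+1$ and $E\leftrightarrow F$, which completes the verification of the Serre relations.
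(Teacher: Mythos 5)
Your overall strategy is sound and is essentially the paper's: a component-wise verification in which the Serre relation is expanded into single-shift terms, terms are collected by total shift (symmetrized in the two repeated indices), and the vanishing of each coefficient is reduced, via the increment tables of Lemma \ref{value2}, to a finite case check on index orderings. Your one organizational novelty --- recasting the relation as the double commutator $[E_i,[E_i,E_{i+1}]]$ and iterating Lemma \ref{rel1}, justified by your correct observation that single-shift operators with affine coefficients are closed under brackets --- is a cleaner framing than the paper's direct expansion of the three triple products, but it produces the same triple sum; the decisive step, which your proposal defers to ``bookkeeping,'' is precisely where the paper's content lies, namely that the symmetrized coefficient factors as $\bigl(P_i^k(\bu)+P_i^{k'}(\bu)+P_{i+1}^{k''}(\bu)\bigr)\bigl(b(2-a+c)+(a+b)c\bigr)$, reducing everything to one scalar identity checked on realizable triples $(a,b,c)$.

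The genuine gap is your claim that ``all four statements reduce to one master computation'' by the parallel structure of the tables under $i\leftrightarrow i+1$ and $E\leftrightarrow F$. Neither swap alone is a symmetry of Lemma \ref{value2}: the strict and non-strict inequalities trade places ($E_i^k(\be_{E_{i+1}}^{k'})=-1$ for $k\leq k'$, whereas $F_i^k(\be_{F_{i+1}}^{k'})=-1$ only for $k<k'$, and dually for the other mixed entries). Only the composite swap ($E\leftrightarrow F$ together with $i\leftrightarrow i+1$) preserves the tables, so the four relations fall into two orbits --- $\{[E_i,[E_i,E_{i+1}]],\,[F_{i+1},[F_{i+1},F_i]]\}$ and $\{[E_{i+1},[E_{i+1},E_i]],\,[F_i,[F_i,F_{i+1}]]\}$ --- with complementary boundary conventions at $k=k'$ and $k''=k$. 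Executing only your chosen master computation and invoking symmetry would leave the second orbit unproven, and its boundary cases genuinely differ (e.g.\ at $k=k'=k''$ one convention yields $(a,b,c)=(1,0,0)$, the other $(1,-1,-1)$). The repair is what the paper does: run the argument uniformly in $P\in\{E,F\}$, using only the convention-insensitive identities $P_i^k(\be_{P_i}^{k'})+P_i^{k'}(\be_{P_i}^k)=2$ and $P_i^k(\be_{P_{i+1}}^{k''})+P_{i+1}^{k''}(\be_{P_i}^k)=-1$, and then note that the only triples violating the scalar identity, $(0,-1,0)$ and $(2,0,-1)$, are impossible under either convention because each would force a cyclic chain of inequalities among $k,k',k''$.
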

\begin{proof} First we observe that for both $P=E$ or $F$,
$$P_i^k(\be_{P_i}^{k'})+P_i^{k'}(\be_{P_i}^k)=2,$$
$$P_i^k(\be_{P_i}^{k''})+P_{i+1}^{k''}(\be_{P_i}^k)=-1.$$
Hence we let $a=P_i^{k'}(\be_{P_i}^k),b=P_{i+1}^{k''}(\be_{P_i}^k),c=P_{i+1}^{k''}(\be_{P_i}^{k'})$. Note that $a$ can only take values $0,1,2$, while $b,c$ can only take $0,-1$. Then the Serre relations amount to the vanishing of $B(a,b,c)$ given by
\begin{eqnarray}
B(a,b,c)&=&\sum_{sym\{k,k'\}}\bigg(P_i^k(\bu)(P_{i}^{k'}(\bu)+a)(P_{i+1}^{k''}(\bu)+b+c)\nonumber\\
&&-2 P_i^{k}(\bu)(P_{i+1}^{k''}(\bu)+b)(P_i^{k'}(\bu)+a-c-1)\nonumber\\
&&+P_{i+1}^{k''}(\bu)(P_i^k(\bu)-b-1)(P_i^{k'}(\bu)+a-c-1)\bigg).\label{Babc}
\end{eqnarray}
Here the sum is a symmetrized sum where in the second set $k,k'$ are interchanged, $b,c$ are interchanged, and $a\to 2-a$.

After expanding and simplifying, it becomes
\Eq{B(a,b,c)=(P_i^k(\bu)+P_i^{k'}(\bu)+P_{i+1}^{k''}(\bu)) (b(2-a+c)+(a+b)c).}
Hence the Serre relations amount to the equation
\Eq{\label{Serreabc}b(2-a+c)+(a+b)c=0.}

It is easy to see that $B(a,0,0)=B(a,-1,-1)=0$ for all $a$, and $B(0,0,-1)=B(2,-1,0)=0$. Therefore it remains to check that the parameter must fall into these cases.

Indeed, if $a=0$, then $k'>k$, hence $(0,-1,0)$ cannot happen because otherwise we have $k'<k''<k$. Similarly $(2,0,-1)$ cannot happen. If $a=1$, then $k'=k$ and we must have $k''<k=k'$ or $k''>k=k'$ giving $(1,0,0)$ or $(1,-1,-1)$.

This proves the first Serre relations for both $E_i$ and $F_i$. The second Serre relations are exactly the same with $i$ replaced by $i+1$, and keeping $a,b,c$ in the same order.
\end{proof}
\section{Principal series representations of $\cU_q(\sl(n))$\label{sec:uqrep}}
\subsection{Quantization\label{sec:quan}}
Assured by the CR diagram s, we can now quantize the action and repeat the same proofs for all the quantum group relations. The procedure is just to change the weight into its quantum number as follows:
\begin{Thm} We have the action of $\cU_q(\sl(n))$ given by:
\begin{eqnarray}
E_i f(\bu)&=&\sum_{k=1}^{n-i}\left[1+\sum_{j=k}^{n-i}u_{j,i+j}\right]_qf(u_{k-1,i+k-1}+1,u_{k-1,i+k}-1,u_{k,i+k-1}-1,u_{k,i+k}+1),\nonumber\\\\
F_i f(\bu)&=&\sum_{k=1}^{i}\left[1+\sum_{j=k}^{i}u_{ji}-\sum_{j=i+1}^{n}u_{ij}+2\l_i\right]_q f(u_{k-1,i-1}-1,u_{k-1,i}+1,u_{k,i}+1,u_{k,i+1}-1),\nonumber\\\\
K_i f(\bu)&=&q^{\left(\sum_{j=1}^{i-1}u_{ji}-\sum_{j=i+1}^{n}u_{ij}-\sum_{j=1}^{n-i}u_{j,i+j}+2\l_i\right)}f(\bu),
\end{eqnarray}
where $[n]_q=\frac{q^n-q^{-n}}{q-q^{-1}}$. They satisfy all the quantum commutation relation \eqref{Krel}-\eqref{qserre2}.
\end{Thm}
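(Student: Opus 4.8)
The plan is to re-run the entire classical argument of Section~\ref{sec:urep} essentially verbatim. The only change in passing from Theorem~\ref{lieaction} to the quantized action is that each linear weight $P(\bu)$ is replaced by its quantum integer $[P(\bu)]_q$, while the shift vectors $\be_{E_i}^k,\be_{F_i}^k$, the CR diagrams, and therefore all of the combinatorial shift-data recorded in Lemmas~\ref{value} and~\ref{value2}, are left untouched. Consequently it suffices to replace the bilinear commutator formula of Lemma~\ref{rel1} by its quantum counterpart and then to revisit each relation in turn, checking that the numerical coincidences exploited classically continue to force the relations after $q$-deformation.

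First I would establish the quantum analogue of Lemma~\ref{rel1}: for shift operators $P_i\cdot f(\bu)=[P_i(\bu)]_q\,f(\bu+\be_i)$ with $P_i(\bu)$ linear and integer shifts $m=P_2'(\be_1)$, $l=P_1'(\be_2)$,
\[
[P_1,P_2]\cdot f(\bu)=\big([P_1(\bu)]_q[P_2(\bu)+m]_q-[P_2(\bu)]_q[P_1(\bu)+l]_q\big)f(\bu+\be_1+\be_2),
\]
because composing the two operators multiplies the weight of the inner one evaluated at the already-shifted argument. The engine of every simplification is the elementary identity
\[
[x]_q[y+z]_q-[y]_q[x+z]_q=[x-y]_q\,[z]_q,
\]
valid precisely when the two shifts coincide, $m=l=z$; it is the exact $q$-deformation of the bilinear expression in \eqref{cr} and degenerates to it as $q\to1$.

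With this in hand the Cartan relations $K_iE_j=q^{a_{ij}}E_jK_i$ and $K_iF_j=q^{-a_{ij}}F_jK_i$ are immediate: $K_i$ is the multiplication operator $f\mapsto q^{H_i(\bu)}f$ for the weight $H_i(\bu)$ of Theorem~\ref{lieaction}, so conjugating a shift by $\be_{E_j}^k$ multiplies it by $q^{H_i'(\be_{E_j}^k)}=q^{-a_{ij}}$ in view of Lemma~\ref{value}. For $E_iF_{i'}-F_{i'}E_i=\d_{ii'}\frac{K_i-K_i^{-1}}{q-q^{-1}}$ I would repeat the classical case-analysis: the coincidence $E_i^k(\be_{F_{i'}}^{k'})=F_{i'}^{k'}(\be_{E_i}^k)$ noted there guarantees $m=l$ in every surviving term, so the displayed identity applies. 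For $i\neq i'$ the two survivors ($k=i'-i$ and $k=i'-i+1$) carry shift-coefficients $+1$ and $-1$ with equal bracketed arguments, hence cancel exactly as before; for $i=i'$ the single survivor gives $[E_i^1(\bu)-F_i^1(\bu)]_q[-1]_q=[H_i(\bu)]_q$, using $[-x]_q=-[x]_q$ and $E_i^1-F_i^1=-H_i$, and $[H_i(\bu)]_q f=\frac{q^{H_i(\bu)}-q^{-H_i(\bu)}}{q-q^{-1}}f=\frac{K_i-K_i^{-1}}{q-q^{-1}}f$ is exactly the required right-hand side.

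The quantum Serre relations, for instance $E_i^2E_{i+1}-[2]_qE_iE_{i+1}E_i+E_{i+1}E_i^2=0$, are where the argument genuinely departs from the classical one, and I expect this to be the main obstacle. Following \eqref{Babc} I would form the quantum symmetrized sum $B_q(a,b,c)$ over a fixed unordered triple of shift indices $\{k,k',k''\}$, each of its three terms now a product of three $q$-numbers whose integer shift-arguments are read off from the CR diagrams exactly as in the classical expansion, with the middle term carrying the coefficient $[2]_q$ rather than $2$. The combinatorial input is unchanged, so by the discussion around \eqref{Serreabc} only the parameter values $(a,0,0)$, $(a,-1,-1)$, $(0,0,-1)$ and $(2,-1,0)$ can occur, and it remains to verify that $B_q$ vanishes on exactly this list. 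Since products of $q$-numbers do not linearize, this is not a formal corollary of the classical computation; instead I would reduce it to the single structural identity
\[
[2]_q[x]_q=[x+1]_q+[x-1]_q,
\]
the deformation of $2x=(x+1)+(x-1)$ that accounts for the appearance of $[2]_q$, together with the product identity above, and then dispatch the four cases directly. Once this short list of $q$-number identities is confirmed, the vanishing of $B_q$ follows, the opposite Serre relations are obtained by the substitution $i\mapsto i+1$ just as classically, and the relations among the $F_i$ follow from the identical computation, completing the verification.
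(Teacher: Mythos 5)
Your proposal is correct and its skeleton coincides with the paper's own proof: the same quantum commutator lemma, with your product identity $[x]_q[y+z]_q-[y]_q[x+z]_q=[x-y]_q[z]_q$ being exactly the identity the paper invokes; the same treatment of the $K_i$ relations via $K_i=q^{H_i}$ and Lemma \ref{value}; the same use of the coincidence $E_i^k(\be_{F_{i'}}^{k'})=F_{i'}^{k'}(\be_{E_i}^k)$ for the $E$--$F$ relation (where, as a small bonus, you track the sign in the $i=i'$ case more carefully than the paper does, writing $[E_i^1(\bu)-F_i^1(\bu)]_q[-1]_q=[H_i]_q$ with $E_i^1-F_i^1=-H_i$); and the same symmetrized sum $B_q(a,b,c)$ restricted to the four admissible triples. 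The one genuine divergence is the endgame of the Serre relations: the paper expands $B_q(a,b,c)$ once and for all into the factorized form $[P_i^k(\bu)+P_i^{k'}(\bu)+P_{i+1}^{k''}(\bu)]_q\left([b]_q[2-a+c]_q+[a+b]_q[c]_q\right)$, so that vanishing reduces to a single scalar equation which collapses to the classical equation \eqref{Serreabc} via $[n]_q=n$ for $n\in\{0,\pm1\}$, whereas you decline to prove a factorization and instead verify $B_q=0$ directly on each of the four triples $(a,0,0)$, $(a,-1,-1)$, $(0,0,-1)$, $(2,-1,0)$. Your route does go through with exactly the tools you name: $(a,-1,-1)$ and $(2,-1,0)$ need only $[2]_q[x]_q=[x+1]_q+[x-1]_q$; $(0,0,-1)$ needs it applied within each half of the symmetrized sum, after which the two halves combine as $[z]_q([x+1]_q+[x-1]_q)-[x]_q([z+1]_q+[z-1]_q)=0$; and $(a,0,0)$ needs the $[2]_q$ identity followed by the product identity, each half reducing to $\pm[z]_q[x-y-a+1]_q$. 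The trade-off is clear: the paper's factorization buys a uniform one-line reduction to the classical computation (at the cost of one unilluminating expansion), while your case-check is more elementary and self-contained but must be carried out by hand in each of the four cases.
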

\begin{proof} We need to check all the quantum relations. Let us denote by $E_i^k(\bu),F_i^k(\bu)$ as before, so that for $P=E$ and $F$,
$$P_i\cdot f(\bu)=\sum_k [P_i^k(\bu)]_qf(\bu+\be_i^k).$$

First note that
\Eq{\label{qnumber101}[n]_q=n\tab\mbox{for $n=0,1,-1$}.}

The relations with $K_i$:
\begin{eqnarray}\label{Krel}
K_iE_j&=&q^{a_{ij}}E_jK_i,\\
K_iF_j&=&q^{-a_{ij}}F_jK_i
\end{eqnarray}
follow from the classical calculations because $K_i=q^{H_i}$, where $H_i$ is just the classical action.

The relations
\Eq{[E_i,F_j]=\d_{ij}\frac{K_i-K_i\inv}{q-q\inv}}
follow from the fact that when $E_i^k(\be_{F_{i'}}^{k'})=F_{i'}^{k'}(\be_i^k)$ as in the classical case, the commutation relation factor becomes
$$[E_i^k(\bu)]_q[F_{i'}^{k'}(\bu+\be_{E_i}^k)]_q-[F_{i'}^{k'}(\bu)]_q[E_i^k(\bu+\be_{F_{i'}}^{k'})]_q=[E_i^k(\bu)-F_{i'}^{k'}(\bu)]_q[E_i^k(\be_{F_{i'}}^{k'})]_q.$$

Hence all the classical calculations still hold, including the case $i=i'$, where we get
$$[E_i^1(\bu)-F_i^1(\bu)]_q=[H_i]_q=\frac{K_i-K_i\inv}{q-q\inv}.$$

Finally the quantum Serre relations:
\Eq{\label{qserre1}\left\{\begin{array}{c}E_iE_iE_{i+1}-[2]_qE_iE_{i+1}E_i+E_{i+1}E_iE_i=0,\\F_iF_iF_{i+1}-[2]_qF_iF_{i+1}F_i+F_{i+1}F_iF_i=0,\end{array}\right.}
\Eq{\label{qserre2}\left\{\begin{array}{c}E_{i+1}E_{i+1}E_i-[2]_qE_{i+1}E_iE_{i+1}+E_iE_{i+1}E_{i+1}=0,\\F_{i+1}F_{i+1}F_i-[2]_qF_{i+1}F_iF_{i+1}+F_iF_{i+1}F_{i+1}=0,\end{array}\right.}
where $[2]_q=q+q\inv$, are equivalent to the vanishing of the commutation factor (with $P=E$ or $F$)

\begin{eqnarray*}
B_q(a,b,c)&=&\sum_{sym\{k,k'\}}\bigg([P_i^k(\bu)]_q[P_{i}^{k'}(\bu)+a]_q[P_{i+1}^{k''}(\bu)+b+c]_q\\
&&-[2]_q [P_i^{k}(\bu)]_q[P_{i+1}^{k''}(\bu)+b]_q[P_i^{k'}(\bu)+a-c-1]_q\\
&&+[P_{i+1}^{k''}(\bu)]_q[P_i^k(\bu)-b-1]_q[P_i^{k'}(\bu)+a-c-1]_q\bigg)
\end{eqnarray*}
with the same $a,b,c$ and the symmetrized sum as before (cf. \eqref{Babc}).

It turns out that this can also be simplified by expanding, and we obtain
\Eq{B_q(a,b,c)=[P_i^k(\bu)+P_i^{k'}(\bu)+P_{i+1}^{k''}(\bu)]_q ([b]_q[2-a+c]_q+[a+b]_q[c]_q),}
completely analogous to the classical case.
Hence the quantum Serre relation amounts to the equation
\Eq{[b]_q[2-a+c]_q+[a+b]_q[c]_q=0,}
which is equivalent to the classical equation \eqref{Serreabc} using \eqref{qnumber101}.
\end{proof}

\subsection{Positive representations of $\cU_q(\sl(2,\R))$ \label{sec:sl2positive}}
Let us motivate our construction of the positive principal series representations by considering first the transition from $\cU(\sl(2))$ to $\cU_q(\sl(2,\R))$. For $\cU(\sl(2))$, the action of the generators is given by
\begin{eqnarray*}
E\cdot f(u)&=&(u+1)f(u+1),\\
F\cdot f(u)&=&(1-u+2\l)f(u-1),\\
H\cdot f(u)&=&(-2u+2\l)f(u).
\end{eqnarray*}
Now note that for $SL_2^+(\R)$, the Haar measure on $U_{>0}^+$ is given by $du$, hence when we apply the Mellin transform, we actually want $Re(u)=-\frac{1}{2}$ in order for the $L^2$ structure be preserved. Hence if we make the translation $u\to -\bi u+\l$ with $\l=-\frac{1}{2}+\bi \a$, $\a\in\R$, we obtain:
\begin{eqnarray*}
E\cdot f(u)&=&(\frac{1}{2}+\bi \a-\bi u)f(u+\bi ),\\
F\cdot f(u)&=&(\frac{1}{2}+\bi \a+\bi u)f(u-\bi ),\\
H\cdot f(u)&=&2\bi uf(u),
\end{eqnarray*}
which can then be seen to be anti self-adjoint, unbounded operators, so that their exponentials are unitary operators.

In the work \cite{PT2}, the (Fourier transformed) action for $\cU_q(\sl(2,\R))$, where $q=e^{\pi \bi b^2}$ for $0<b<1$, is constructed by scaling $u$ by $b$ and replacing $\frac{1}{2}+\bi \a$ by $\frac{Q}{2b}+\bi \frac{\a}{b}$ in the quantized formula, where $Q=b+b\inv$, so that the action becomes:
\begin{eqnarray}
E&=&[\frac{Q}{2b}+\frac{\bi }{b}(\a-u)]_qe^{-2\pi bp},\\
F&=&[\frac{Q}{2b}+\frac{\bi }{b}(\a+u)]_qe^{2\pi bp},\\
K&=&e^{2\pi bu},
\end{eqnarray}
where $p=\frac{1}{2\pi \bi }\dd[,u]$ so that $e^{\pm2\pi bp}f(u)=f(u\mp \bi b)$.

This representation, called in \cite{PT2} the self dual principal series, has two remarkable properties.
First the action given is positive essentially self-adjoint. Due to the factor $\frac{Q}{2b}=\frac{1}{2}+\frac{1}{2b^2}$, the expression for $E$ and $F$ is actually:
\begin{eqnarray}
E&=&\left(\frac{\bi }{q-q\inv}\right) (e^{\pi b(\a-u-2p)}+e^{-\pi b(\a-u+2p)}),\\
F&=&\left(\frac{\bi }{q-q\inv}\right) (e^{\pi b(\a+u+2p)}+e^{-\pi b(\a+u-2p)}),
\end{eqnarray}
which is positive essentially self-adjoint, as shown in \cite{Ip}. Note that the factor
\Eq{\left(\frac{\bi }{q-q\inv}\right)=(2\sin(\pi b^2))\inv}
is positive for $0<b<1$.

Secondly, it is self dual under the change $b\longleftrightarrow b\inv$ in the following sense. This change gives the action $(\til[E],\til[F],\til[K])$ of its modular double counterpart $\cU_{\til[q]}(\sl(2,\R))$ where $\til[q]=e^{\pi \bi b^{-2}}$, which commute with $(E,F,K)$ weakly (i.e. the spectrum doesn't commute). Furthermore, if we let
\Eq{e=2\sin(\pi b^2)E,\tab f= 2\sin(\pi b^2)F,}
\Eq{\til[e]=2\sin(\pi b^{-2})\til[E],\tab \til[f]= 2\sin(\pi b^{-2})\til[F],}
then the following transcendental relations are valid:
\Eq{e^{\frac{1}{b^2}}=\til[e],\tab f^{\frac{1}{b^2}}=\til[f],\tab K^{\frac{1}{b^2}}=\til[K].}
The proof is based on the following Lemma that is also repeatedly used in our construction

\begin{Lem}\label{b2lem}\cite{BT, Ip} If $u,v$ are essentially self-adjoint and $uv=q^2vu$, then
$u+v$ is essentiall self-adjoint, and
\Eq{(u+v)^{1/b^2}=u^{1/b^2}+v^{1/b^2}.}
\end{Lem}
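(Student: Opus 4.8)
The plan is to reduce the statement to the canonical Weyl realisation of the pair $(u,v)$ and then to diagonalise $u+v$ by conjugation with a quantum dilogarithm, exploiting its self-duality under $b\leftrightarrow b^{-1}$. First I would invoke a Stone--von Neumann type uniqueness: since $u,v$ are positive essentially self-adjoint and satisfy the exponentiated Weyl relation $uv=q^2vu$ with $q=e^{\pi\bi b^2}$ and $b^2\in\R\setminus\Q$, the logarithms $x:=\frac{1}{2\pi b}\log u$ and $p:=\frac{1}{2\pi b}\log v$ form an irreducible pair with $[x,p]=\frac{1}{2\pi\bi}$, so up to unitary equivalence (the general case following by a direct sum/integral of copies) we may take $u=e^{2\pi b x}$ and $v=e^{2\pi b p}$ on $L^2(\R)$. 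In this realisation the fractional powers are unambiguous positive self-adjoint operators, $u^{1/b^2}=e^{2\pi x/b}$ and $v^{1/b^2}=e^{2\pi p/b}$, and one checks directly that $u^{1/b^2}v^{1/b^2}=\til[q]^2v^{1/b^2}u^{1/b^2}$ with $\til[q]=e^{\pi\bi b^{-2}}$; thus the right-hand side $u^{1/b^2}+v^{1/b^2}$ is precisely the modular-dual partner of $u+v$.

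Next I would settle essential self-adjointness of $u+v$. The algebraic sum on $D(u)\cap D(v)$ is symmetric and positive, and the cleanest way to promote this to essential self-adjointness is to produce the unitary equivalence of the following step, which identifies $\overline{u+v}$ with the manifestly self-adjoint single exponential $u=e^{2\pi b x}$; alternatively one exhibits a dense set of analytic vectors (suitable Gaussian/entire vectors) and applies Nelson's theorem. Either route also supplies a common core on which the subsequent operator identities are justified.

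The heart of the argument is the diagonalisation. I would use the non-compact quantum dilogarithm $\Phi_b$, the distinguished meromorphic function with $|\Phi_b|=1$ on $\R$ obeying the \emph{pair} of functional equations
\[
\Phi_b\big(z-\tfrac{\bi b}{2}\big)=(1+e^{2\pi b z})\,\Phi_b\big(z+\tfrac{\bi b}{2}\big),\qquad
\Phi_b\big(z-\tfrac{\bi}{2b}\big)=(1+e^{2\pi z/b})\,\Phi_b\big(z+\tfrac{\bi}{2b}\big).
\]
Since $e^{\mp 2\pi b p}$ implements the imaginary shift $x\mapsto x\pm\bi b$ and $e^{\mp 2\pi p/b}$ the shift $x\mapsto x\pm\bi b^{-1}$, reading these two equations as operator identities produces a single unitary $U$ (a $\Phi_b$ of an affine combination of $x$ and $p$) that conjugates $u+v$ to $u$, i.e.\ $U(u+v)U^{-1}=u$, and \emph{simultaneously} conjugates the dual sum to the dual generator, $U(u^{1/b^2}+v^{1/b^2})U^{-1}=u^{1/b^2}$ --- the two conclusions being literally the two functional equations above. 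Applying the Borel functional calculus map $T\mapsto T^{1/b^2}$, which commutes with unitary conjugation, gives $U(u+v)^{1/b^2}U^{-1}=u^{1/b^2}=U(u^{1/b^2}+v^{1/b^2})U^{-1}$, and cancelling $U$ yields the desired identity $(u+v)^{1/b^2}=u^{1/b^2}+v^{1/b^2}$.

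The main obstacle is the rigorous functional-analytic bookkeeping rather than the algebra: one must justify that $\Phi_b$ of an unbounded self-adjoint affine combination of $x,p$ is a well-defined unitary, that the formal shift manipulations of the functional equations hold as genuine operator identities on a common core (the poles of $\Phi_b$ must be controlled, which is where $b^2\notin\Q$ enters), and that the fractional power $(u+v)^{1/b^2}$ defined through the spectral theorem matches the one appearing on the dual side. Once these analytic points are secured --- as in \cite{BT,Ip} --- the conceptual content is exactly the $b\leftrightarrow b^{-1}$ self-duality of $\Phi_b$, which is what forces the $1/b^2$-power of a sum of $q$-commuting positive operators to distribute over the summands.
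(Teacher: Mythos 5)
Your argument is essentially the proof that the paper itself defers to: the lemma is stated here with a citation to \cite{BT, Ip} and no proof is given in the paper, and the proofs in those references proceed exactly as you propose --- reduce by Stone--von Neumann uniqueness (in the Weyl, i.e.\ integrated, form of $uv=q^2vu$, which is the correct reading of the hypothesis) to the canonical pair $u=e^{2\pi bx}$, $v=e^{2\pi bp}$, then conjugate by a quantum dilogarithm whose two shift equations simultaneously turn $u+v$ and $u^{1/b^2}+v^{1/b^2}$ into a single exponential and its modular dual, and finally use that the spectral-theorem power $T\mapsto T^{1/b^2}$ commutes with unitary conjugation. Your identification of the remaining work as functional-analytic bookkeeping (core/domain issues for essential self-adjointness and control of the poles of $\Phi_b$) is also where the cited references concentrate their effort, so the proposal is correct and matches the intended proof.
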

\subsection{Positive representations of $\cU_q(\sl(n,\R))$ \label{sec:slnpositive}}
In order to construct a positive representation, we need to shift our (pure imaginary) variables $u_{ij}$ with appropriate real part such that the Mellin transform preserves the Haar measure, and moreover each expression in the quantum weight has the factor
$\frac{1}{2}+\bi \a_k$ where $\l_k=-\frac{1}{2}+\bi \a_k$ with $\a_k\in\R$.

\begin{Thm} There is a unique shift in $u_{i,j}\to -\bi u_{i,j}+c_{i,j}$ such that the action of $\cU(\sl(n,\R))$ takes the form
\begin{eqnarray}
E_i f(\bu)&=&\sum_{k=1}^{n-i}\left(\frac{1}{2}+\bi \a_k'-\bi \sum_{j=k}^{n-i}u_{j,i+j}\right)\cdot\nonumber\\
&&\tab f(u_{k-1,i-1}+\bi ,u_{k-1,i}-\bi ,u_{k,i}-\bi ,u_{k,i+1}+\bi ),\\
F_i f(\bu)&=&\sum_{k=1}^{i}\left(\frac{1}{2}+\bi \a_k'-\bi \sum_{j=k}^{i}u_{ji}+i\sum_{j=i+1}^{n}u_{ij}\right)\cdot\nonumber\\
&&\tab f(u_{k-1,i-1}+\bi ,u_{k-1,i}-\bi ,u_{k,i}-\bi ,u_{k,i+1}+\bi ),\\
H_i f(\bu)&=&-\bi \left(\sum_{j=1}^{i-1}u_{ji}-\sum_{j=i+1}^{n}u_{ij}-\sum_{j=1}^{n-i}u_{j,i+j}\right)f(\bu).
\end{eqnarray}
Here the new $\l_k'$  is related to the old $\l_k$ by
\Eq{\l_k':=\frac{\sum_{m=1}^{n-k} m\l_{n-m}}{\sum_{m=1}^{n-k} m},}
and hence in particular $Re(\l_k')=-\frac{1}{2}$ and we set $\l'_k:=-\frac{1}{2}+\bi \a'_k$ with $\a_k'\in\R$.
Furthermore, the shifts $c_{i,j}$ obey the Haar measure on $U_{>0}^+$ (cf. Proposition \ref{Haar}), namely
\Eq{\label{Haarc}Re(c_{i,j})=\case{-\frac{1}{2}& \mbox{if $j=n$,}\\0& \mbox{otherwise.}}}
\end{Thm}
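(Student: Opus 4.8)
The plan is to obtain the asserted action by literally substituting $u_{i,j}\mapsto -\bi u_{i,j}+c_{i,j}$ into the Mellin--transformed formulas of Theorem \ref{lieaction} and then solving for the constants $c_{i,j}$. Every weight in Theorem \ref{lieaction} is an affine--linear function of the $u_{i,j}$ whose linear part has integer coefficients, so the substitution splits each weight into its linear part, which becomes exactly $-\bi$ times the original linear part (this produces the $-\bi\sum\cdots$ terms appearing in the statement), and a constant part, which is the old constant augmented by the sum of the relevant $c_{a,b}$; meanwhile the unit shifts of the arguments turn into the pure--imaginary shifts displayed in the statement. Thus the whole content of the theorem is the requirement that, for each generator and each summand $k$, the constant part equal $\frac12+\bi\a'_k$. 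Writing $\l'_k:=-\frac12+\bi\a'_k$ and adopting the convention $c_{i,i}:=0$, this is equivalent to the two families
\[
\sum_{j=k}^{n-i}c_{j,i+j}=\l'_k\quad(\text{from }E_i),\qquad
\sum_{j=k}^{i}c_{j,i}-\sum_{j=i+1}^{n}c_{i,j}+2\l_i=\l'_k\quad(\text{from }F_i).
\]
The first task is to show this (over--determined) linear system has a unique solution and to compute it.

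First I would solve the $E_i$--family. For fixed $k$ the constraint holds for every $i$ with $k\le n-i$; subtracting the constraint for $k+1$ from that for $k$ at the same $i$ makes the sum telescope and yields $c_{k,i+k}=\l'_k-\l'_{k+1}$, independent of $i$, while the single--term case $i=n-k$ gives $c_{k,n}=\l'_k$. Hence every entry is forced:
\[
c_{k,m}=\case{\l'_k-\l'_{k+1}&\mbox{if }m<n,\\ \l'_k&\mbox{if }m=n.}
\]
A direct telescoping check then confirms these values satisfy \emph{all} the $E_i$--constraints simultaneously, so the $c_{i,j}$ are uniquely fixed in terms of the still unknown $\l'_k$, which proves the uniqueness clause.

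Next I would feed these values into the $F_i$--family. Using $c_{j,i}=\l'_j-\l'_{j+1}$ for $j<i$ (and $c_{i,i}=0$) the first sum telescopes to $\l'_k-\l'_i$, while the second evaluates to $(n-1-i)(\l'_i-\l'_{i+1})+\l'_i$. The crucial point is that $\l'_k$ cancels from both sides, so the $F_i$--constraint is \emph{independent of $k$} --- this is exactly the compatibility that makes the over--determined system consistent --- and collapses to the triangular recursion
\[
(n+1-i)\,\l'_i-(n-1-i)\,\l'_{i+1}=2\l_i,\qquad i=1,\dots,n-1,
\]
with top value $\l'_{n-1}=\l_{n-1}$. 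Since the leading coefficients $n+1-i$ are nonzero this determines the $\l'_i$ uniquely, and a short computation (telescoping the numerators after clearing the denominators $\tfrac{p(p+1)}{2}$ with $p=n-i$) shows that the closed form $\l'_k=\big(\sum_{m=1}^{n-k}m\,\l_{n-m}\big)/\big(\sum_{m=1}^{n-k}m\big)$ solves it, establishing existence.

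Finally the Haar statement \eqref{Haarc} is immediate: the closed form exhibits $\l'_k$ as a positively weighted average of the $\l_{n-m}$, each having real part $-\tfrac12$, whence $Re(\l'_k)=-\tfrac12$ for all $k$; therefore $Re(c_{k,m})=Re(\l'_k)-Re(\l'_{k+1})=0$ for $m<n$ and $Re(c_{k,n})=Re(\l'_k)=-\tfrac12$, matching \eqref{Haarc} exactly. The only genuinely delicate step is the middle one: verifying that the $k$--dependence really cancels in the $F_i$--family so that the system stays consistent and reduces to a single recursion per $i$. The bookkeeping of the two telescoping sums, in particular the boundary column $m=n$ where the formula for $c_{k,m}$ changes, is where care is needed, though the rank--$2$ case $n=3$ already displays the full mechanism and can be used as a sanity check.
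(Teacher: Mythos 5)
Your proposal is correct and arrives at the same solution as the paper, but it organizes the linear algebra around a different subset of constraints. As written, the paper's proof never invokes the $F_i$-family at all: it imposes (a) vanishing of the constant term in $H_i$ and (b) matching of the constants of $E_i^k$ over $i$ for each fixed $k$, which gives a square system of $n(n-1)/2$ equations in the $n(n-1)/2$ unknowns $c_{i,j}$; this is then reduced to a triangular system along the anti-diagonals $x_{lm}=c_{l-m+1,n-m+1}$ and declared uniquely solvable, with the closed formula for $\l'_k$ and the compatibility with the displayed $F_i$ form compressed into the phrase ``can be solved explicitly''. You instead take the $E_i$- and $F_i$-families as primary, with the $\l'_k$ as extra unknowns: telescoping the $E$-constraints forces $c_{k,m}=\l'_k-\l'_{k+1}$ for $m<n$ and $c_{k,n}=\l'_k$, and substitution into the $F$-constraints cancels $\l'_k$, leaving the recursion $(n+1-i)\l'_i-(n-1-i)\l'_{i+1}=2\l_i$, whose unique solution is the stated weighted average (your verification of the closed form via $T_i-T_{i+1}=(n-i)\l_i$ is exactly right). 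What your route buys is an explicit proof of the two points the paper leaves implicit: the consistency of the over-determined system (the $k$-independence in the $F$-family) and the derivation of the formula for $\l'_k$, together with the Haar/real-part check; what the paper's route buys is a cleaner uniqueness argument by equation counting. The one item you should add is the $H_i$ requirement: the displayed $H_i$ action forces $\sum_{j=1}^{i-1}c_{ji}-\sum_{j=i+1}^{n}c_{ij}-\sum_{j=1}^{n-i}c_{j,i+j}+2\l_i=0$, which is not one of your two families; but since (with your convention $c_{ii}=0$) it is precisely the difference of your $F_i$- and $E_i$-constraints at $k=1$, it holds automatically for your solution, so a single sentence closes this minor gap.
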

\begin{proof}
 This is an exercise in linear algebra where we require the constant in $H_i$ disappear, and for each fixed $k$, the constant in $E_i^k$ and $E_{i'}^k$ matches for every $i,i'$. This gives $n(n-1)/2$ relations in the possible $n(n-1)/2$ constants $c_{i,j}$. An elementary reduction shows that this reduces to sets of simutaneous equations of the form
$$(k-1)x_{lk}+\sum_{m=1}^{k} 2x_{lm}=2\l'_{l-k+1},\tab k=1,...,l,$$
for $1\leq l\leq n-1$, where $x_{lm}=c_{l-m+1,n-m+1}$, which obviously has a unique solution and can be solved explicitly.
\end{proof}

Therefore following the quantization procedure of $\cU_q(\sl(2,\R))$, we obtain:
\begin{Thm} \label{mainthm}The following action of the generators gives the positive principal series representation for $\cU_q(\sl(n,\R))$:
\begin{eqnarray}
E_i f(\bu)&=&\sum_{k=1}^{n-i}\left[\frac{Q}{2b}+\frac{\bi }{b}\left(\a_k'-\sum_{j=k}^{n-i}u_{j,i+j}\right)\right]_q\cdot\nonumber\\
&&\tab e^{2\pi b(-p_{k-1,i+k-1}+p_{k-1,i+k}+p_{k,i+k-1}-p_{k,i+k})},\\
F_i f(\bu)&=&\sum_{k=1}^{i}\left[\frac{Q}{2b}+\frac{\bi }{b}\left(\a_k'-\sum_{j=k}^{i}u_{ji}+\sum_{j=i+1}^{n}u_{ij}\right)\right]_q\cdot\nonumber\\
&&\tab e^{2\pi b(p_{k-1,i-1}-p_{k-1,i}-p_{k,i}+p_{k,i+1})},\\
K_i f(\bu)&=&e^{\pi b\left(\sum_{j=1}^{i-1}u_{ji}-\sum_{j=i+1}^{n}u_{ij}-\sum_{j=1}^{n-i}u_{j,i+j}\right)}f(\bu),
\end{eqnarray}
where as usual $q=e^{\pi \bi b^2}, Q=b+b\inv$ and $e^{\pm2\pi bp_{ij}}$ is shift in $u_{ij}$ by $\mp\bi b$. These operators are all positive essentially self-adjoint and satisfy the transcendental relations
\begin{eqnarray}
\label{transe}(e_i)^{1/b^2} &=& \til[e_i]\;\;,\\
\label{transf}(f_i)^{1/b^2} &=& \til[f_i]\;\;,\\
\label{transK}K_i^{1/b^2}&=&\til[K_i]\;\;,
\end{eqnarray}
where as before, $e_i=2\sin(\pi b^2)E_i$ and $f_i=2\sin(\pi b^2)F_i$ and similarly for $\til[e_i],\til[f_i]$ with $b$
replaced by $b\inv$ in all the formulas.
\end{Thm}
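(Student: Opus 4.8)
The plan is to reduce everything to the rank-one computation of Section \ref{sec:sl2positive}, by splitting each generator into exponential building blocks, and then to control the sum over $k$ by an iterated application of Lemma \ref{b2lem}. For fixed $i$, I would first expand the quantum integer in each summand of $E_i$. Writing $L_k=\frac{Q}{2b}+\frac{\bi}{b}(\a_k'-\sum_{j=k}^{n-i}u_{j,i+j})$ and letting $S_k$ denote the shift factor $e^{2\pi b(-p_{k-1,i+k-1}+p_{k-1,i+k}+p_{k,i+k-1}-p_{k,i+k})}$, the identity $[L_k]_q=\frac{q^{L_k}-q^{-L_k}}{q-q\inv}$ gives
\Eq{E_i^k=\frac{1}{q-q\inv}\bigl(q^{L_k}-q^{-L_k}\bigr)S_k=X_k+Y_k,}
exactly as in the two-term rewriting for $\cU_q(\sl(2,\R))$. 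The factor $\frac{Q}{2b}=\frac12+\frac1{2b^2}$ is what symmetrizes the exponents: using $q^{\pm L_k}=(\text{phase})\,e^{\mp\pi b(\a_k'-\sum_{j=k}^{n-i}u_{j,i+j})}$ together with the Baker--Campbell--Hausdorff formula (the commutator of a position combination with a momentum combination is a central scalar), each $X_k,Y_k$ becomes, once the prefactor is merged with the phase of $q^{\pm L_k}$ into the positive constant $(2\sin\pi b^2)\inv$, the exponential of a self-adjoint real combination of the $u$'s and $p$'s, hence a positive essentially self-adjoint operator by \cite{Ip,Ru}. The same expansion applies verbatim to $F_i$, while $K_i=e^{\pi b(\cdots)}$ is manifestly a positive self-adjoint exponential.

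The heart of the argument is to show that the $2(n-i)$ operators $X_1,\dots,X_{n-i},Y_1,\dots,Y_{n-i}$ occurring in $E_i$ admit a single linear order under which every forward pair satisfies the $q^2$-commutation hypothesis of Lemma \ref{b2lem}. Here the CR diagrams do the bookkeeping: since only the anti-diagonal positions $u_{j,i+j}$ pair nontrivially with the anti-diagonal momenta $p_{j,i+j}$ appearing in the shifts, a short computation of the relevant Heisenberg brackets gives the factors $X_kX_{k'}=q^2X_{k'}X_k$ and $Y_kY_{k'}=q^{-2}Y_{k'}Y_k$ for $k<k'$, together with $X_kY_{k'}=q^2Y_{k'}X_k$ for \emph{all} $k,k'$. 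Consequently the order
\Eq{X_1\prec X_2\prec\cdots\prec X_{n-i}\prec Y_{n-i}\prec\cdots\prec Y_1}
makes every forward pair $q^2$-commuting; and since $(A+B)C=q^2C(A+B)$ whenever $AC=q^2CA$ and $BC=q^2CB$, one may add the operators one block at a time. Lemma \ref{b2lem} then yields that $E_i=\sum_k(X_k+Y_k)$ is positive essentially self-adjoint, and likewise $F_i$ after the parallel computation.

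For the transcendental relations I would rescale, setting $\hat X_k=2\sin(\pi b^2)X_k$, $\hat Y_k=2\sin(\pi b^2)Y_k$, so that $e_i=\sum_k(\hat X_k+\hat Y_k)$ with the same $q^2$-commutation pattern (rescaling by a positive scalar is immaterial to the commutation factors). The additivity clause of Lemma \ref{b2lem}, applied along the same ordering, gives $e_i^{1/b^2}=\sum_k(\hat X_k^{1/b^2}+\hat Y_k^{1/b^2})$; and since each $\hat X_k$ is the exponential of a real combination of $u$ and $p$ with coefficients proportional to $b$, raising to the power $1/b^2$ merely replaces $b$ by $b\inv$ in the exponent, which is precisely the defining formula for the corresponding tilde operator. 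Summing the blocks recovers $\til[e_i]$, and analogously $f_i^{1/b^2}=\til[f_i]$; the relation $K_i^{1/b^2}=\til[K_i]$ is immediate because $K_i$ is a single exponential.

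The main obstacle is the second paragraph: exhibiting one global total order under which all forward pairs $q^2$-commute. In the $\sl(2,\R)$ case there is a single block $X+Y$, so the $q^2$-relation is the lone hypothesis of Lemma \ref{b2lem}; in rank $\ge2$ one must check that the cross-commutators between distinct blocks are exactly $q^{\pm2}$ and never a genuinely commuting pair (for which additivity of the $1/b^2$ power would fail), and that these factors are mutually consistent with the single ordering displayed above. This is precisely the data encoded in the CR diagrams, and it is the step that truly uses the rank-$\ge2$ combinatorics. The essential self-adjointness of the individual unbounded exponentials, and of their $q$-commuting sums, is the analytic input which we import from \cite{Ip,Ru}.
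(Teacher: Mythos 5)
Your proposal is correct and takes essentially the same route as the paper's own proof: the same splitting of each summand into two positive exponentials via the $\frac{Q}{2b}$ factor, the same reordering $X_1,\dots,X_s,Y_s,\dots,Y_1$ (the paper's $A_1^+,\dots,A_s^+,A_s^-,\dots,A_1^-$) under which every earlier term $q^2$-commutes with every later term, and the same inductive application of Lemma \ref{b2lem} to get positivity, essential self-adjointness, and the transcendental relations. The cross-commutation factors you record agree with those the paper reads off the CR diagrams, namely $A_k^+A_{k'}^{\pm}=q^{2}A_{k'}^{\pm}A_k^+$ and $A_k^-A_{k'}^{\pm}=q^{-2}A_{k'}^{\pm}A_k^-$ for $k<k'$, so the argument goes through as stated.
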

\begin{proof} From the CR diagram, we know that only one shifts and one weight index coincide for both $E$ and $F$. Hence the constant $\frac{Q}{2b}$ gives the positivity of the operator, using the commutation relation of $p$ and $x$:
$$q^{\frac{Q}{2b}+x}e^{2\pi bp}= \bi e^{\pi b(x+2p)},$$
so that for $c,c'$ commuting with $x$,
$$[\frac{Q}{2b}+x+c]_q e^{2\pi b(p+c')}=\left(\frac{\bi }{q-q\inv}\right)(e^{\pi b(x+2p+c+2c')}+e^{\pi b(-x+2p+c+2c')})$$
is positive. Furthermore, the two factors $q^2$ commute.

Hence let us write the operators (both $E$ and $F$) in the form
\Eq{\frac{\bi }{q-q\inv}\sum_k (A_k^++A_k^-),}
where $A_k^+A_k^-=q^2A_k^-A_k^+$. Moreover, by looking at the CR diagram, we can actually see that:
$$A_k^+ A_{k'}^\pm=q^2 A_{k'}^\pm A_k^+,$$
$$A_k^- A_{k'}^\pm=q^{-2} A_{k'}^\pm A_k^-,$$
whenever $k<k'$, so that, if we rearrange the summation as
$$A_1^++A_2^++...+A_s^++A_s^-+A_{s-1}^-+...+A_1^-,$$
then each term $q^2$ commute with the terms after that. Hence using the fact that the operators
$$e^{\pi b(\sum \a_{ij} u_{ij}+\sum \beta_{ij}p_{ij})}$$
are essentially self-adjoint, by applying Lemma \ref{b2lem} and by induction, we immediately get the required conditions,
as well as the transcendental relations.
\end{proof}

We note that it is actually impossible for the operators $(E_i,F_i,K_i)$ and $(\til[E_j],\til[F_j],\til[K_j])$ to commute in general, for example, due to relations such as
$$K_iE_{i+1}=q\inv E_{i+1}K_i,$$
since
\begin{eqnarray*}
K_i\til[E]_{i+1}&=&K_iE_{i+1}^{\frac{1}{b^2}}\\
&=&q^{-\frac{1}{b^2}}E_{i+1}^{\frac{1}{b^2}}K_i\\
&=&-\til[E]_{i+1}K_i.
\end{eqnarray*}
However we do have the following:
\begin{Prop}\label{sign} The operators $(E_i,F_i,K_i) $ commute with the generators $(\til[E_j],\til[F_j],\til[K_j])$ up to a sign.
\end{Prop}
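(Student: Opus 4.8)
The plan is to lean on the transcendental relations \eqref{transe}--\eqref{transK}, which express each tilded generator as a scalar multiple of a functional-calculus power of the corresponding untilded one, namely $\til[E]_j = c\,E_j^{1/b^2}$, $\til[F]_j = c'\,F_j^{1/b^2}$ and $\til[K]_j = K_j^{1/b^2}$, combined with the single numerical fact that
\[ q^{1/b^2}=e^{\pi\bi b^2/b^2}=e^{\pi\bi}=-1,\tab\mbox{so that}\tab q^{m/b^2}=(-1)^m \]
for every integer $m$. Since the scalar constants do not affect any commutation relation, it suffices to compare $E_i,F_i,K_i$ with the bare powers $E_j^{1/b^2},F_j^{1/b^2},K_j^{1/b^2}$.

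First I would dispose of every pair that already obeys a genuine $q$-commutation relation among the untilded generators. If $XY=q^mYX$ with $m\in\Z$, then $XYX\inv=q^mY$, and because $Y$ is positive essentially self-adjoint the functional calculus gives $XY^{1/b^2}X\inv=(q^mY)^{1/b^2}=q^{m/b^2}Y^{1/b^2}=(-1)^mY^{1/b^2}$; thus $X$ and $\til[Y]$ commute up to the single sign $(-1)^m$. This settles $K_i$ against $\til[E]_j,\til[F]_j,\til[K]_j$ (signs $(-1)^{\pm a_{ij}}$, read off from the Cartan matrix), $E_i$ and $F_i$ against $\til[K]_j$, the commuting cases $E_iE_j=E_jE_i$ and $F_iF_j=F_jF_i$ for $|i-j|\ge2$, the cross relations $E_iF_j=F_jE_i$ for $i\ne j$ (which already covers the mixed adjacent pairs), and the trivial case of a generator against a power of itself. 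In each of these a single sign $(-1)^m$ emerges, exactly as in the sample computation $K_i\til[E]_{i+1}=-\til[E]_{i+1}K_i$ preceding the statement, where $m=a_{i,i+1}=-1$.

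The genuinely remaining pairs are the adjacent ones $E_i$ versus $\til[E]_{i\pm1}$ and $F_i$ versus $\til[F]_{i\pm1}$, together with the equal-index pair $E_i$ versus $\til[F]_i$, for which there is no quadratic $q$-commutation (the relevant brackets are the cubic Serre relations, resp. $[E_i,F_i]=\frac{K_i-K_i\inv}{q-q\inv}$), so the conjugation argument does not apply to the generators directly. Here I would descend to the building blocks used in the proof of Theorem \ref{mainthm}: each generator is a constant times a sum $\sum_k(A_k^++A_k^-)$ of elementary positive self-adjoint exponentials satisfying pairwise $\fq$-commutation relations $AB=q^{2\ell}BA$, $\ell\in\Z$, so by Lemma \ref{b2lem} the power $1/b^2$ distributes over such sums and $\til[Y]$ is the corresponding sum of the blocks $(A_k^\pm)^{1/b^2}$. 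Any two such exponentials, one scaled by $b$ and one by $b\inv$, satisfy a Weyl relation in which the $b\cdot b\inv$ factors cancel, leaving a phase $(-1)^N$ whose integer exponent $N$ is the symplectic pairing of the two coefficient vectors; this pairing is an integer computable from the quadrilateral-against-line overlaps tabulated in Lemma \ref{value2}. Hence every cross term of $E_i\til[E]_{i+1}$, and likewise of the other hard pairs, picks up a sign.

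The crux, and the step I expect to be the main obstacle, is to prove that these block-by-block signs are \emph{uniform}: that the exponents $N$ arising from all pairs $A_k^\pm$ (from $E_i$) against $(B_{k'}^\pm)^{1/b^2}$ (from $\til[E]_{i+1}$) share one common parity, so that a single sign factors out of the double sum and yields $E_i\til[E]_{i+1}=\pm\,\til[E]_{i+1}E_i$ rather than an uncontrolled mixture of $\pm1$ coefficients. I would extract these parities from the grid combinatorics of Lemma \ref{value2} (the values $0,\pm1,2$), using the fact that the blocks of a single generator already commute only up to even exponents $q^{2\ell}$, to transport the parity of one overlapping block pair to all the others; the expected outcome is even parity for equal-index and distant pairs and odd parity for adjacent ones, mirroring the rank-one fact that in $\cU_q(\sl(2,\R))$ all blocks commute exactly. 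Once this parity count is established the proposition follows, and it even pins down the sign: $+1$ unless $|i-j|=1$, in agreement with $(-1)^{a_{ij}}$ from the clean cases.
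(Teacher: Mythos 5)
Your mechanism is the right one (signs come from $e^{\pi\bi\cdot(\text{integer symplectic pairing})}$ between $b$-scaled and $b\inv$-scaled exponentials, and for the hard pairs this is exactly the paper's own proof, which counts the parity of the number of $(x,p)$ pairings read off the CR diagrams), but there are two genuine gaps. The concrete error is your classification of the mixed adjacent pairs $E_i$ versus $\til[F]_{i\pm1}$ as "clean." The relation $E_iF_j=F_jE_i$ for $|i-j|=1$ is an identity of the \emph{sums} only: it does not hold block-by-block. For instance, for $E_i$ and $F_{i+1}$ the blocks $A^{\pm}_2$ and $B^{\mp}_2$ satisfy $AB=q^{\pm2}BA$, not $AB=BA$; the sum-level commutativity arises from recombination of cross terms, exactly as in the paper's proof of $[E_i,F_{i'}]=\d_{ii'}H_i$. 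Consequently $E_i$ and $F_{i+1}$ are not known to commute \emph{strongly} (commuting spectral resolutions), and the step $XY^{1/b^2}X\inv=(XYX\inv)^{1/b^2}$ is not licensed by a formal commutation identity alone — strong commutation is precisely what is in question (this is the same weak-versus-strong distinction the paper flags in Section \ref{sec:sl2positive}). These pairs must therefore go through your block analysis as well; they do come out commuting, because the identity $E_i^k(\be_{F_{i'}}^{k'})=F_{i'}^{k'}(\be_{E_i}^k)$ (noted in the paper's $[E,F]$ computation) makes the relevant parity $2E_i^k(\be_{F_{i'}}^{k'})$, which is even. The same caveat formally applies to your use of $E_iE_j=E_jE_i$, $|i-j|\geq2$, though there the blocks commute exactly (disjoint variables), so that case is trivially repaired. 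The shortcut is only safe as stated for pairs involving $K_i$ or $\til[K]_j$, which are single exponentials $q$-commuting with every individual block with a uniform exponent.

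The second gap is that the step you yourself call the crux — uniformity of the block-by-block parities — is left as an "expected outcome." It is in fact a short consequence of material already in the paper, so your plan does close: the sign attached to a block of $P_i$ against a block of $\til[P']_j$ is $(-1)^{N}$ with $N={P'}_i^k(\be_{P'_j}^{k'})+{P'}_j^{k'}(\be_{P_i}^k)$, and the symmetrization identities recorded in the Serre-relation proof give $P_i^k(\be_{P_i}^{k'})+P_i^{k'}(\be_{P_i}^k)=2$ (same index, even), $P_i^k(\be_{P_{i+1}}^{k''})+P_{i+1}^{k''}(\be_{P_i}^k)=-1$ (adjacent, odd), while Lemma \ref{value} gives $N=2E_i^k(\be_{F_{i'}}^{k'})$ for all $E$-$F$ pairs (even) and $N=H'_i(\be_{P_j}^k)\in\{0,\pm1,\pm2\}$ uniform in $k$ for the $K$-pairs. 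Each case is independent of $(k,k')$, so a single sign factors out of the double sum, with $-1$ exactly when $|i-j|=1$ for the $EE$, $FF$, $EK$, $FK$ pairs. Once you make these two repairs — run every pair, not just the "hard" ones, through the block computation, and replace the expected parity count by the two identities above — your argument coincides with the paper's proof.
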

\begin{proof} The operators commute whenever by imposing the CR diagrams, there are even numbers of $(x,p)$ pair, so that the commuting factor is of the form $q^{2\pi \bi  k}=1$ for $k\in\Z$. Otherwise for odd numbers of $(x,p)$ pair we pick up $e^{\pi \bi (2k+1)}=-1$.
Looking at the CR diagrams, it is then clear that we have:
\begin{eqnarray*}
E_i\til[E_j]&=&-\til[E_j]E_i\tab\mbox{if $|i-j|=1$,}\\
F_i\til[F_j]&=&-\til[F_j]F_i\tab\mbox{if $|i-j|=1$,}\\
E_i\til[K_j]&=&-\til[K_j]E_i\tab\mbox{if $|i-j|=1$,}\\
K_i\til[E_j]&=&-\til[E_j]K_i\tab\mbox{if $|i-j|=1$,}\\
F_i\til[K_j]&=&-\til[K_j]F_i\tab\mbox{if $|i-j|=1$,}\\
K_i\til[F_j]&=&-\til[F_j]K_i\tab\mbox{if $|i-j|=1$,}
\end{eqnarray*}
and the variables commute otherwise.
\end{proof}
In order to get commutativity, we have to modify the quantum group which will be considered in the next section. We conclude this section with several fundamental properties of these representations.

\begin{Prop} The positive representations are irreducible.
\end{Prop}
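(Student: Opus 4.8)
The plan is to prove irreducibility via Schur's lemma, by showing that the von Neumann algebra $\cM$ generated by the spectral projections (equivalently, by all bounded functions) of the operators $E_i,F_i,K_i$ is the full algebra $B(L^2(\R^N))$, so that its commutant is trivial. The essential self-adjointness and positivity established in Theorem \ref{mainthm} are what make this formulation legitimate: they let us pass to bounded functions of the generators and manipulate them inside a von Neumann algebra. The key external input is the Stone--von Neumann theorem, which asserts that the quantum torus $\T_\fq^{N}$, generated by the Weyl pairs $e^{2\pi b u_{ij}},e^{2\pi b p_{ij}}$ for $1\le i<j\le n$, acts irreducibly on $L^2(\R^N)$; equivalently the von Neumann algebra generated by these $2N$ operators is all of $B(L^2(\R^N))$. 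Thus it suffices to show that $\cM$ contains every generator of this quantum torus.

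First I would extract the multiplication operators: since each $K_i=e^{\pi b L_i(\bu)}$ for an explicit linear form $L_i$ in the variables $u_{ij}$, functional calculus on the commuting family $\{K_i\}$ places all bounded functions of $(L_1(\bu),\dots,L_r(\bu))$ into $\cM$. These $r$ forms alone do not separate the $N$ position variables when $n\ge 3$, so the shift operators must be recruited. Here I would use the single-shift generators: for the canonical orientation of the $A_r$ quiver, $E_{n-1}$ and $F_1$ each consist of a single term $[\,\cdot\,]_q$ times one Weyl shift (as noted after \eqref{Arorientation}), and after the positivity rewriting they take the form $\frac{\bi}{q-q\inv}(A^++A^-)$ with $A^+A^-=q^2A^-A^+$. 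From such a rank-one quantum-plane element, together with the multiplication operators already in $\cM$, one recovers the individual Weyl pair out of which it is built.

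The heart of the argument is then an induction that propagates this from the single-shift generators to all remaining $u_{ij},p_{ij}$. I would organize it along the grid underlying the CR diagrams: having access to a collection of Weyl pairs, I would form the commutators $[E_i,E_{i\pm1}]$, $[F_i,F_{i\pm1}]$ and suitable products with the $K_i$, whose shift content is read off directly from the CR diagrams of Section \ref{sec:infinite}, and divide out the weight factors (positive, invertible, hence $\cM$-accessible) using multiplication operators already obtained. Each such step adjoins one new Weyl pair adjacent in the grid, and an induction on position in the grid exhausts all of $\T_\fq^{N}$.

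The main obstacle is precisely this inductive disentangling: for general $i$ the operators $E_i,F_i$ are sums of several shift terms rather than a single one, so one cannot simply read off an individual Weyl pair from a generator. The CR-diagram bookkeeping already developed in the paper is the correct tool, since it records exactly which shifts and weights each term carries, and the $q^2$-commutativity of the rearranged summands $A_k^{\pm}$ used in the proof of Theorem \ref{mainthm} controls the cross terms when forming products and commutators. Once every Weyl pair is shown to lie in $\cM$, Stone--von Neumann gives $\cM=B(L^2(\R^N))$, and Schur's lemma yields irreducibility.
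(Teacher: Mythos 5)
Your formulation of irreducibility---triviality of the von Neumann commutant, reduced via Stone--von Neumann to showing that every Weyl pair $e^{2\pi b u_{ij}},e^{2\pi b p_{ij}}$ is affiliated with $\cM$---is a legitimate (indeed stronger) reading of the statement, and it is a genuinely different route from the paper's. The paper argues softly by deformation: the representation of Section \ref{sec:quan} has a formal classical limit in which the principal series with generic parameter $\l$ is irreducible, and the positive representation is obtained from it by a rescaling and a shift of parameter that remains generic because $b^2$ is irrational. Your framework does have one structural virtue the paper's lacks: it is forced to work with strong (spectral) commutativity, which is essential here, since by Propositions \ref{sign} and \ref{Langlands} the tilde generators commute weakly with everything, so any argument sensitive only to formal commutation would wrongly suggest reducibility.

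However, both pillars of your argument are asserted rather than proved, and the assertions sit exactly where the difficulty lies. First, the claim that from $E_{n-1}=\frac{\bi}{q-q\inv}(A^++A^-)$ together with the multiplication operators already in $\cM$ one ``recovers the individual Weyl pair'': at that stage the only unitaries you possess are functions of the $u_{ij}$, and conjugating by such a unitary rescales $A^+$ and $A^-$ by the \emph{same} scalar, because the two summands carry identical shift content and differ only in the sign of the weight variable; hence nothing in your toolkit separates them. The known way to split such a sum is conjugation by a quantum dilogarithm unitary $g_b(e^{2\pi b x})$ (a multiplication operator), which transforms $A^++A^-$ into a single exponential; this identity is the crux of any such proof and appears nowhere in your proposal. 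Second, the inductive step ``divide out the weight factors\dots each step adjoins one new Weyl pair'' is not a well-defined operation: the weights do not commute with the shifts, so one cannot divide a sum of weighted shift terms by its weights; commutators such as $[E_i,E_{i+1}]$ are again sums of several shift terms, so it is unverified that any single new Weyl pair is isolated; and forming commutators of unbounded operators reintroduces precisely the domain questions the von Neumann formulation was meant to bypass. Until these two steps are supplied---essentially a quantum dilogarithm calculus and an honest induction carried out with it---what you have is a plausible program, not a proof.
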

\begin{proof} First we note that the representation defined in Section \ref{sec:quan} has a formal classical limit $b\to 0$, and we know that for any fixed generic parameter $\l$ the classical action is irreducible. Since the positive representation is obtained by first rescaling the function space by $b$ and shifting of the parameter, followed by specifying the real part of $1+\l_i$ to be $\frac{1}{2}+\frac{1}{2b^2}$ which is generic since $b^2$ is irrational, it follows that the quantum representation is also irreducible.
\end{proof}

\begin{Rem} In the classical case, there is a family of intertwiners corresponding to the Weyl group elements $w\in W$ between representations of principal series parametrized by $\fh_\R^*$, see for example \cite{Kn} and references therein. It would be interesting to write explicit formula for the intertwining operators in the Mellin transform and find their $q$-deformations. In the case of $\cU(\sl(2,\R))$, the intertwining operator corresponding to the nontrivial Weyl element becomes multiplication by ratios of gamma functions, and the $q$-deformed intertwiner for $\cU_q(\sl(2,\R))$ is given by ratios of quantum dilogarithm functions \cite{PT1}. In the general case we will then obtain parametrization of the inequivalent positive representations by the positive cone $P_\R^+\simeq \fh_\R^*/W$. Thus we can restrict the values of the parameters to $\a_k'\geq 0$.
\end{Rem}

Finally we observe that the coproducts also satisfy the criterion of a positive representation:
\begin{Prop} \label{transcoprod} The coproducts
\begin{eqnarray}
\D(E_i)&=&E_i\ox K_i+1\ox E_i,\\
\D(F_i)&=&F_i\ox 1+K_i\inv\ox F_i,\\
\D(K_i)&=&K_i\ox K_i,
\end{eqnarray}
are positive essentially self-adjoint operators, and satisfy the transcendental relation
\begin{eqnarray}
(\D e_i)^{\frac{1}{b^2}}&=&\D\til[e_i],\\
(\D f_i)^{\frac{1}{b^2}}&=&\D\til[f_i],\\
(\D K_i)^{\frac{1}{b^2}}&=&\D\til[K_i].
\end{eqnarray}
\end{Prop}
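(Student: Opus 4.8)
The plan is to deduce each assertion from a single application of Lemma \ref{b2lem}, mirroring the proof of Theorem \ref{mainthm} but now on the Hilbert space tensor product $L^2(\R^N)\ox L^2(\R^N)$. The grouplike coproduct $\D(K_i)=K_i\ox K_i$ needs no argument: it is an external tensor product of two positive essentially self-adjoint operators, and since the functional calculus factors through the tensor product, $(\D K_i)^{1/b^2}=K_i^{1/b^2}\ox K_i^{1/b^2}=\til[K_i]\ox\til[K_i]=\D\til[K_i]$ by the single-factor relation \eqref{transK}. For $\D(E_i)$ I would set $u=E_i\ox K_i$ and $v=1\ox E_i$, and for $\D(F_i)$ set $u=K_i\inv\ox F_i$ and $v=F_i\ox 1$; in each case both summands are positive essentially self-adjoint, being external tensor products (or ampliations) of the positive operators produced by Theorem \ref{mainthm}.

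The crux is to verify the hypothesis $uv=q^2vu$ of Lemma \ref{b2lem} for these pairs. For $\D(E_i)$ this is immediate from the $a_{ii}=2$ case $K_iE_i=q^2E_iK_i$ of \eqref{Krel}:
\Eq{uv=E_i\ox K_iE_i=q^2(E_i\ox E_iK_i)=q^2vu.}
For $\D(F_i)$ one rewrites $K_iF_i=q^{-2}F_iK_i$ as $K_i\inv F_i=q^2F_iK_i\inv$ and finds $uv=q^2vu$ in the same way. Granting this, Lemma \ref{b2lem} shows at once that $\D(E_i)=u+v$ and $\D(F_i)=u+v$ are essentially self-adjoint; positivity follows because on a common core each summand is positive, so the quadratic form of the sum is positive and hence so is its self-adjoint closure.

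For the transcendental relations I would feed the rescaled generators into the same decomposition, using that $\D$ is multiplicative and $2\sin(\pi b^2)$ is a scalar to write $\D(e_i)=e_i\ox K_i+1\ox e_i$ and $\D(f_i)=K_i\inv\ox f_i+f_i\ox 1$. Since the $q^2$-commutation of the two summands is unchanged by the scalar rescaling, Lemma \ref{b2lem} gives $(\D e_i)^{1/b^2}=(e_i\ox K_i)^{1/b^2}+(1\ox e_i)^{1/b^2}$, and applying $(A\ox B)^{1/b^2}=A^{1/b^2}\ox B^{1/b^2}$ for commuting tensor factors together with \eqref{transe}--\eqref{transK} yields $\til[e_i]\ox\til[K_i]+1\ox\til[e_i]=\D\til[e_i]$. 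The case of $f_i$ is identical, with $K_i\inv$ producing the factor $\til[K_i]\inv$, so that $(\D f_i)^{1/b^2}=\til[f_i]\ox 1+\til[K_i]\inv\ox\til[f_i]=\D\til[f_i]$.

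The main obstacle is functional-analytic rather than algebraic. Lemma \ref{b2lem} presupposes that $u$ and $v$ are genuinely essentially self-adjoint and that the relation $uv=q^2vu$ holds in the strong (Weyl) sense on a common core, so I would have to confirm that tensor-product operators such as $E_i\ox K_i$ are essentially self-adjoint on an appropriate domain and that the identity $(A\ox B)^{1/b^2}=A^{1/b^2}\ox B^{1/b^2}$ is legitimate via the commuting functional calculus. Both points follow from the explicit exponential form of the generators established in the proof of Theorem \ref{mainthm}, so no new analytic input is needed beyond selecting, in each coproduct, the correct pair of $q^2$-commuting positive summands.
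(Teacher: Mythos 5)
Your proposal is correct and follows essentially the same route as the paper: the paper's proof likewise observes that the two summands of $\D(E_i)$ and $\D(F_i)$ are positive essentially self-adjoint and $q^2$-commute, then applies Lemma \ref{b2lem} together with the single-generator transcendental relations \eqref{transe}--\eqref{transK}. Your write-up merely makes explicit the choices $u=E_i\ox K_i$, $v=1\ox E_i$ (resp.\ $u=K_i\inv\ox F_i$, $v=F_i\ox 1$), the verification of $uv=q^2vu$ from \eqref{Krel}, and the factorization of the functional calculus across tensor factors, all of which the paper leaves implicit.
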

\begin{proof} It follows from the fact that the two summands of the coproducts for $E_i$ and $F_i$ are positive self-adjoint and $q^2$-commute, hence we can apply Lemma \ref{b2lem} and the transcendental relations \eqref{transe}-\eqref{transK} of the generators.
\end{proof}


\section{Main theorem}\label{sec:main}
\subsection{Modified quantum group $\bU_{\fq\til[\fq]}(\sl(n,\R))$ and its positive representations}\label{sec:modified}
In order to obtain a representation of the modular double, we would like to have generators corresponding to the two parts of the modular double commute with each other. We therefore introduce the following modified quantum generators:

\begin{Def} We define $\fq:=q^2=e^{2\pi \bi b^2}$, and
\begin{eqnarray*}
\bE_i&:=&q^i E_iK_i^i,\\
\bF_i&:=&q^{i-1}F_iK_i^{1-i},\\
\bK_i&:=&K_i^2.
\end{eqnarray*}
Then the variables are positive self-adjoint. Let $[A,B]_\fq=AB-\fq\inv BA$ be the quantum commutator. Then the quantum relations in the new variables become:
\begin{eqnarray}
\label{bKE}\bK_i\bE_j&=&\fq^{a_{ij}}\bE_j\bK_i,\\
\label{bKF}\bK_i\bF_j&=&\fq^{-a_{ij}}\bF_j\bK_i,\\
\bE_i\bF_j&=&\bF_j\bE_i\tab \mbox{ if $i\neq j$},\\
{[\bE_i,\bF_i]}_\fq&=&\frac{1-\bK_i}{1-\fq},
\end{eqnarray}
and we have the quantum Serre relations
\Eq{[\bE_i,[\bE_{i+1},\bE_i]_\fq]=0=[\bE_{i+1},[\bE_{i+1},\bE_i]_\fq],}
\Eq{[\bF_i,[\bF_{i},\bF_{i+1}]_\fq]=0=[\bF_{i+1},[\bF_i,\bF_{i+1}]_\fq].}
\end{Def}

We can now state our main theorem:
\begin{Thm}
Let $\til[\fq]:=\til[q]^2=e^{2\pi \bi b^{-2}}=\fq^{\frac{1}{b^2}}$. We define the tilde part of the modified modular double by representing the generators $\til[\bE_i],\til[\bF_i],\til[\bK_i]$ using the formulas above with all the terms replaced by tilde. Then all the relations with tilde replaced hold.

Furthermore the properties of positive representations are satisfied:

\begin{itemize}
\item[(i)] the operators $\be_i,\bf_i,\bK_i$ and their tilded counterparts are represented by positive essentially self-adjoint operators,
\item[(ii)] we have the transcendental relations:
\begin{eqnarray}
(\be_i)^{\frac{1}{b^2}}&=&\til[\be_i],\\
(\bf_i)^{\;\frac{1}{b^2}}&=&\til[\bf_i],\\
(\bK_i)^{\frac{1}{b^2}}&=&\til[\bK_i].
\end{eqnarray}
\end{itemize}
Besides, all the variables $\bE_i,\bF_i,\bK_i$ commutes with all $\til[\bE_j],\til[\bF_j],\til[\bK_j]$.
Here as before, $\be_i=2\sin(\pi b^2)\bE_i$ and $\bf_i=2\sin(\pi b^2)\bF_i$ and similarly for $\til[\be_i],\til[\bf_i]$
with $b$ replaced by $b\inv$ in all the formulas. Therefore we obtain the positive representations for
$\bU_{\fq\til[\fq]}(\sl(n,\R))$.
\end{Thm}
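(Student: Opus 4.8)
The plan is to derive the four assertions---the tilde relations, positivity and essential self-adjointness, the transcendental relations, and the commutativity of the two sets of generators---separately, each reduced to the corresponding fact already available for the unmodified generators $E_i,F_i,K_i$ through the explicit substitution $\bE_i=q^iE_iK_i^i$, $\bF_i=q^{i-1}F_iK_i^{1-i}$, $\bK_i=K_i^2$. The tilde relations require the least work: by Theorem \ref{mainthm} the operators $\til[E_i],\til[F_i],\til[K_i]$ furnish a representation of $\cU_{\til[q]}(\sl(n,\R))$, so the purely algebraic computation of the preceding Definition that converted the $\cU_q$-relations into the $\fq$-commutator relations \eqref{bKE}--\eqref{bKF}, the relation $[\bE_i,\bF_i]_\fq=\frac{1-\bK_i}{1-\fq}$ and the modified Serre relations goes through verbatim under $q\mapsto\til[q]$, $\fq\mapsto\til[\fq]$, $b\mapsto b\inv$. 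I would dispose of this in one line and concentrate on (i), (ii) and commutativity.

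For property (i) I would first settle self-adjointness by a phase computation. Writing $\be_i=q^ie_iK_i^i$ and using that $e_i,K_i$ are self-adjoint with $K_ie_i=q^2e_iK_i$, one gets $K_i^ie_i=q^{2i}e_iK_i^i$, so that with $\over[q]=q\inv$ one has $(q^ie_iK_i^i)^*=q^{-i}K_i^ie_i=q^ie_iK_i^i=\be_i$; the exponents $i$ and $1-i$ in the definition are exactly those rendering $\be_i$ and $\bf_i$ self-adjoint, and $\bK_i=K_i^2$ is visibly so. Positivity I would obtain just as in the proof of Theorem \ref{mainthm}: writing $e_i=\sum_k(A_k^++A_k^-)$ with $A_k^\pm$ positive self-adjoint and $\fq$-commuting and absorbing $q^iK_i^i$ into each summand gives $\be_i=\sum_k(B_k^++B_k^-)$ with $B_k^\pm$ again positive self-adjoint and $\fq$-commuting, since the twist by $K_i^i$ merely shifts the real linear forms appearing in the exponents. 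Lemma \ref{b2lem} then yields positive essential self-adjointness of $\be_i$ and $\bf_i$, while $\bK_i=K_i^2$ is directly positive self-adjoint; the tilded operators follow by the $b\leftrightarrow b\inv$ symmetry.

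With this decomposition in hand, property (ii) is almost immediate: applying Lemma \ref{b2lem} termwise to $\be_i=\sum_k(B_k^++B_k^-)$ computes $(\be_i)^{1/b^2}=\sum_k((B_k^+)^{1/b^2}+(B_k^-)^{1/b^2})$, and each $B_k^\pm=e^{\pi bL}$ has $(B_k^\pm)^{1/b^2}=e^{\pi b\inv L}$ equal to the corresponding summand of $\til[\be_i]$. I would stress that the power $1/b^2$ is taken in the positive-operator functional calculus, so the phase $q^i$ is absorbed into the positive $B_k^\pm$ before exponentiation; there is no appeal to a (false) scalar identity $q^{1/b^2}=\til[q]$, and it is precisely the transcendental relations \eqref{transe}--\eqref{transK} for $e_i,f_i,K_i$ that make the termwise matching valid.

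The main content, and the step I expect to be hardest, is the genuine commutativity of $\bE_i,\bF_i,\bK_i$ with $\til[\bE_j],\til[\bF_j],\til[\bK_j]$. By Proposition \ref{sign} the unmodified operators commute up to a sign equal to $-1$ exactly on the listed adjacent pairs $|i-j|=1$ and $+1$ otherwise; crucially, this sign records the parity of the number of $(x,p)$ Weyl crossings between an untilde and a tilde operator, so a total sign $+1$ means a genuine commuting factor $e^{2\pi\bi k}=1$ rather than a merely formal one. I would compute each of the nine products obtained by pairing one of $\bE_i,\bF_i,\bK_i$ with one of $\til[\bE_j],\til[\bF_j],\til[\bK_j]$ by transporting the tilde factor leftward through the two elementary factors ($E_i$ and $K_i^i$ for $\bE_i$, and similarly for $\bF_i,\bK_i$), collecting one sign from Proposition \ref{sign} per crossing and using that $K_i$ commutes with $\til[K_j]$. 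For $|i-j|=1$ this produces, for instance, $(-1)^{i+j+1}=+1$ for $\bE_i\til[\bE_j]$ and $(-1)^{i+1-j}=+1$ for $\bE_i\til[\bF_j]$, while $\bK_i=K_i^2$ carries an even power of each crossing and so commutes with everything; the point is that the exponents $i$, $1-i$ and $2$ in the definition are exactly those for which every accumulated sign collapses to $+1$ for all $i,j$. Carrying out this bookkeeping over all nine pair-types is then routine, and the even-parity observation upgrades each formal $+1$ to strong (spectral) commutativity. The one real subtlety to guard against is precisely this last upgrade---reading sign $+1$ as an even number of Weyl crossings, hence true operator commutativity, rather than a merely formal cancellation.
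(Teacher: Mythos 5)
Your computational core is sound and follows essentially the paper's route: the tilde relations by verbatim substitution, positivity via the decomposition of each modified generator into $\fq$-commuting positive exponentials (absorbing $q^iK_i^i$ into each summand, which works because the extra central commutators with the $K_i^i$-exponent cancel), and commutativity by sign bookkeeping --- in the paper's notation this is exactly the condition $c_{i+1}-c_i$ odd with the choice $c_i=i$, $d_i=1-c_i$. The only methodological difference is in the transcendental relations: you apply Lemma \ref{b2lem} termwise to the decomposition, whereas the paper argues more slickly that $\bE_i^{1/b^2}=c\,\til[\bE]_i$ for a constant phase $c=q^n$ with $n$ real, and that two positive self-adjoint operators proportional by a phase force $c=1$. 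Both are valid; yours is more explicit, the paper's avoids tracking the phase at all.

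However, your final step --- that the even number of Weyl crossings ``upgrades each formal $+1$ to strong (spectral) commutativity,'' which you single out as the key subtlety --- is wrong, and in the direction opposite to what the paper itself asserts. An even number of $(u,p)$ crossings gives a total exchange factor $e^{2\pi \bi k}=1$ when the operators are commuted past each other on a dense domain of suitable analytic vectors; it does not make the spectral measures commute. The counterexample already sits inside these representations: $e^{2\pi b u}$ and $e^{2\pi b^{-1} p}$ satisfy the Weyl relation with factor $e^{2\pi\bi}=1$, yet the spectral projections of the first are indicator functions in position space and those of the second are indicator functions in momentum space, and these never commute. This is precisely why the paper, in the rank-one discussion of Section \ref{sec:sl2positive}, says the tilde generators ``commute with $(E,F,K)$ weakly (i.e.\ the spectrum doesn't commute).'' The commutativity asserted in the main theorem, and achieved by the exponents $i$, $1-i$, $2$, is this weak (formal, densely defined) commutativity; strong commutativity is not claimed and in fact fails. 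Deleting the claimed upgrade leaves a proof that matches the paper's; insisting on it would make the proof of the last assertion incorrect.
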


\begin{proof} The new quantum relations follow simply by substitution and commuting $K_i$ to the same side and cancel out. For the transcendental relations, we observe that
$\bE_i^\frac{1}{b^2}=c\til[\bE]_i$ where $c$ is a constant of the form $q^n$ where $n$ is real. Since both operators are positive self-adjoint, $c=1$.
Similarly analysis hold for $\bF_i$. The case for $\bK_i$ is trivial.

For the construction, we assume
\begin{eqnarray*}
\bE_{i}&=&q^{c_i}E_iK_i^{c_i},\\
\bF_{i}&=&q^{-d_i}F_iK_i^{d_i},\\
\bK_{i}&=&K_i^2.
\end{eqnarray*}
Then the relations with $\bK_i$ is manifest. The relations involving $[\bE_i,\bF_i]_\fq$ requires $d_i=1-c_i$, and
the quantum Serre relations amount to, after reduction, the condition:
$$c_{i+1}=c_i+1,$$
while the commutativity between the original and tilde variables are governed by
$$c_{i+1}-c_i=odd,$$
which is already satisfied. Hence our choice $c_i=i$ is one of the simplest solutions to these conditions.
\end{proof}

The above modified quantum relations can be generalized to an arbitrary simply-laced type.
\begin{Prop} Given an orientation of the Dynkin diagram, we assign a weight $s_i \in\Z$ to each node $i$ such that $s_j-s_i=1$ whenever the orientation is given by
\Eq{\label{orientation}\cdots \circ^i\to\circ^j\cdots}
Then we can define the quantum generators as
\begin{eqnarray*}
\bE_i&:=&q^{s_i} E_iK_i^{s_i},\\
\bF_i&:=&q^{s_i-1}F_iK_i^{1-s_i},\\
\bK_\l&:=&K_\l^2.
\end{eqnarray*}
All relations will be the same as above, and the quantum Serre relations become
\Eq{[\bE_i,[\bE_{j},\bE_i]_\fq]=0=[\bE_{j},[\bE_{j},\bE_i]_\fq],}
\Eq{[\bF_i,[\bF_{i},\bF_{j}]_\fq]=0=[\bF_{j},[\bF_i,\bF_{j}]_\fq],}
whenever the orientation is given by \eqref{orientation}. Our choice for type $A_r$ above corresponds to the orientation given by \eqref{Arorientation}.
\end{Prop}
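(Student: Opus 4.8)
The plan is to exploit the fact that every defining relation of $\bU_{\fq\til[\fq]}(\g_\R)$ is \emph{local}: it involves either a single node $i$ or a pair of nodes $i,j$, and in the latter case it depends only on the Cartan entry $a_{ij}$. For a simply-laced diagram $a_{ij}\in\{2,-1,0\}$, so an edge $\{i,j\}$ with $a_{ij}=-1$ is indistinguishable from an edge of an $A_2$ subsystem, while non-adjacent nodes have $a_{ij}=0$ and everything commutes trivially. Thus I would reduce the verification of every relation to the node-local ($A_1$) and edge-local ($A_2$) computations already carried out in the proof of the Main Theorem, the only genuinely new ingredient being the bookkeeping that ties the orientation to the shifts $s_i$. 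I would first record that the assignment $s_i$ exists: the Dynkin diagram of a simply-laced finite-type algebra is a tree, so choosing a root and propagating $s_j=s_i+1$ along each oriented edge $\circ^i\to\circ^j$ produces a consistent height function with no obstruction from cycles.

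Next I would dispatch the relations that hold for \emph{any} choice of $s_i$. The relations \eqref{bKE}--\eqref{bKF} follow by direct computation, since $\bK_i=K_i^2$ satisfies $K_i^2E_j=\fq^{a_{ij}}E_jK_i^2$ and commutes with the auxiliary factor $K_j^{s_j}$, so the powers of $q$ attached to $\bE_j$ cancel. The $\fq$-commutator $[\bE_i,\bF_i]_\fq=\frac{1-\bK_i}{1-\fq}$ is purely node-local: substituting $\bE_i=q^{s_i}E_iK_i^{s_i}$ and $\bF_i=q^{s_i-1}F_iK_i^{1-s_i}$ and using $a_{ii}=2$ together with $[E_i,F_i]=\frac{K_i-K_i\inv}{q-q\inv}$ reproduces verbatim the $\sl(2,\R)$ computation, the exponent $1-s_i$ on the $\bF_i$ factor being exactly what makes the total $K$-power collapse to $K_i$. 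For $i\neq j$ the cross relations reduce to the edge computation, the non-adjacent case being immediate.

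The heart of the argument, and the step I expect to be most delicate, is the orientation-dependence of the modified Serre relations. Along an oriented edge $\circ^i\to\circ^j$ the pair $\{\bE_i,\bE_j\}$ spans a copy of the rank-two subsystem, and the Main Theorem's reduction showed that the modified Serre relation for such a pair holds precisely when the relative shift equals $+1$. Since the orientation fixes $s_j-s_i=1$, the node $j$ of higher height plays the role that $i+1$ played in the $A_r$ computation, so the nested commutators must be written as $[\bE_i,[\bE_j,\bE_i]_\fq]$ and $[\bE_j,[\bE_j,\bE_i]_\fq]$, with the analogous ordering for $\bF$; I would verify that this asymmetric placement is the one forced by the sign of $s_j-s_i$, which is exactly the point where a reversed orientation would produce a spurious power of $\fq$. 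The same equality $s_j-s_i=1$, being odd, is the parity condition that (by the analysis behind Proposition \ref{sign}) makes every original generator commute, rather than anticommute, with every tilde generator.

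Finally I would specialize: for the canonical orientation \eqref{Arorientation} the propagation rule gives $s_{i+1}=s_i+1$, hence $s_i=i$ up to an additive constant, recovering the choice $c_i=i$ of the Main Theorem and confirming consistency. The transcendental relations and the positivity of $\be_i,\bf_i,\bK_i$ and their tilded counterparts require no new input beyond Theorem \ref{mainthm} and Lemma \ref{b2lem}, since the modifying factors $q^{s_i}K_i^{s_i}$ are positive self-adjoint and the tilde identifications are pinned down by positivity exactly as in the proof of the Main Theorem.
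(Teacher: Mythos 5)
Your proposal is correct and follows essentially the same route as the paper: the paper gives no separate argument for this Proposition, treating it as an immediate consequence of the reduction in the proof of the Main Theorem (the $\bK_i$-relations hold for any choice of shifts, the relation $[\bE_i,\bF_i]_\fq=\frac{1-\bK_i}{1-\fq}$ forces the exponent $1-s_i$, the quantum Serre relations force the shift to increase by exactly $1$ along each oriented edge, and the oddness of that difference gives the commutativity with the tilde generators), which is precisely the edge-by-edge reduction you invoke. Your additional observation that the height function $s_i$ always exists because a simply-laced Dynkin diagram is a tree is a small but worthwhile point that the paper leaves implicit.
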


\begin{Rem} For an arbitrary simply-laced type, we expect that the construction of positive principal series representations can be done in an analogous way. First we express the classical positive unipotent group in terms of its cluster variable coordinate, proposed in \cite{BFZ2}. Then we look for the classical action of $\cU(\g_\R)$ in this space, as well as its Mellin transformed action. Next we proceed as before by quantizing the scalar weights, and argue using a similar diagrammatic technique that all the quantum relations are satisfied. Finally we can try to adjust the parameters so that the resulting operators are positive essentially self-adjoint, and that they are expressed as sums of $q^2$ commuting terms, which will then imply the desired transcendental relations. In general the formulas may be complicated, and we may need to use cluster variables associated to different orientations of a quiver as well as relations between them. 
\end{Rem}


\subsection{Tori realizations and the Langlands dual}\label{sec:slqnpositive}
We note that the representation for the modified quantum generators is still given by positive essentially self-adjoint operators, however they are constructed from the "half" tori $\{e^{\pi bu_{ij}}, e^{2\pi bp_{ij}}\}$. It turns out that there exists a unitary transformation that realize the action in terms of the full tori $\{e^{2\pi bu_{ij}}, e^{2\pi bp_{ij}}\}$. Let $\T_{\fq\til[\fq]}^{n(n-1)/2}$ be the quantum tori generated by positive self-adjoint $\bu_{ij},\bv_{ij},\til[\bu_{ij}], \til[\bv_{ij}]$ for $1\leq i<j\leq n$ such that
\Eq{\bu_{ij}\bv_{ij}=\fq \bv_{ij}\bu_{ij}, \tab \til[\bu_{ij}]\til[\bv_{ij}]=\til[\fq]\til[\bv_{ij}]\til[\bu_{ij}].}
which can be realized by 
\Eq{\label{fulltorus}\bu_{ij}=e^{2\pi bu_{ij}},\tab \bv_{ij}=e^{2\pi bp_{ij}},}
and similarly for $\til[\bu_{ij}],\til[\bv_{ij}]$ with $b$ replaced by $b\inv$. Then we obtain
\begin{Thm} We have an embedding
\Eq{\bU_{\fq\til[\fq]}(\sl(n,\R))\inj \T_{\fq\til[\fq]}^{n(n-1)/2}.}
\end{Thm}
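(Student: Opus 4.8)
The plan is to realize the modified generators, which as written in the Definition above and in Theorem~\ref{mainthm} are built from the \emph{half} Weyl operators $e^{\pi b u_{ij}}$ together with the \emph{full} shift operators $e^{2\pi b p_{ij}}$, as genuine Laurent monomials in a standard full quantum torus after a single unitary change of variables. First I would collect the exponent data from the explicit formulas: writing each $\bE_i,\bF_i$ in the form $\frac{\bi}{q-q\inv}\sum_k(A_k^++A_k^-)$ and each $\bK_i$ as a single exponential, I record for every monomial the vector $\ell\in\R^{2N}$ of coefficients of the $u_{ij}$ and $p_{ij}$ (in units of $2\pi b$), where $N=n(n-1)/2$. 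The $\bK_i=K_i^2$ already have integral coefficients, whereas the $A_k^\pm$ carry half-integral coefficients in the $u$-directions; these half-integers are exactly the obstruction to lying in the torus generated by $\bu_{ij}=e^{2\pi b u_{ij}}$ and $\bv_{ij}=e^{2\pi b p_{ij}}$.

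Next I would study the lattice $\Lambda\subset\R^{2N}$ generated by all of these exponent vectors. The Weyl commutation relations endow $\R^{2N}$ with a symplectic form $\w$ pairing each $u_{ij}$ with its conjugate $p_{ij}$, and the crux is that $\Lambda$ is \emph{unimodular}: it admits a symplectic basis whose $\w$-pairings reproduce exactly the defining relations $\bu_{ij}\bv_{ij}=\fq\bv_{ij}\bu_{ij}$ of $\T_{\fq\til[\fq]}^{N}$. I would prove this by exhibiting such a basis explicitly, reading off the required differences of exponent vectors from the commutation-relation (CR) diagrams. In the rank-one case, for instance, the two terms of $\bE$ contribute the vectors $(\tfrac12,-1)$ and $(\tfrac32,-1)$ while $\bK$ contributes $(-2,0)$; their primitive generators $(\tfrac12,-1)$ and $(1,0)$ satisfy $\w=\pm1$, so $\Lambda$ is unimodular and all the $\bF$-exponents lie in it as well. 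The general case is the same bookkeeping carried out along the diagonals indexed by the cluster variables.

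With $\Lambda$ in hand, I would choose $S\in Sp(2N,\R)$ carrying the standard symplectic lattice $\Z^{2N}$ onto $\Lambda$, and invoke Stone--von Neumann (equivalently the metaplectic representation) to produce a unitary $U$ on $L^2(\R^N)$ with $U\,e^{2\pi b\ell}\,U\inv=e^{2\pi b\,S\ell}$ for every linear form $\ell$. Conjugating the representation by $U\inv$ then turns each monomial $A_k^\pm$ and each $\bK_i$ into a Laurent monomial in the standard generators $\bu_{ij},\bv_{ij}$, so the modified generators — and hence the whole algebra $\bU_{\fq\til[\fq]}(\sl(n,\R))$, including the tilded half obtained by the substitution $b\to b\inv$ — are realized inside $\T_{\fq\til[\fq]}^{N}$. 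Since $U$ is unitary the assignment is an algebra homomorphism onto its image; injectivity then follows because distinct PBW monomials in the generators map to distinct exponent vectors, so no relations beyond those already imposed can collapse. This yields the desired embedding.

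The main obstacle is the unimodularity statement of the second step: one must verify, simultaneously for all $\bE_i,\bF_i,\bK_i$ across every index $i$ and every term $k$, that the half-integral $u$-exponents assemble into a lattice possessing a $\w$-symplectic basis of the correct discriminant. This is a purely combinatorial fact about the cluster-variable coordinates, but making it precise requires careful global tracking of all the exponents — precisely what the CR diagrams are designed to organize — and it is also where one must confirm both that $\Lambda$ has full rank $2N$ and that the tilded lattice is $\w$-orthogonal to the untilded one, so that the two commuting tori fit together as $\T_{\fq\til[\fq]}^{N}$. A secondary technical point is the rigorous justification of injectivity, which I would base on the faithfulness of the irreducible positive representation together with the distinctness of the monomial exponents.
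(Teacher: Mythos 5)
Your proposal is correct in outline, but it is an abstract-existence version of what the paper does by explicit construction, and the comparison is instructive. The paper conjugates by one very specific metaplectic element: multiplication by the unitary $\exp(\frac{\pi \bi}{2}f(\bu))$ with $f$ quadratic in the $u_{ij}$, which is exactly the shear fixing every $u_{ij}$ and sending $2p_{ij}\mapsto 2p_{ij}+\sum c_{kl}u_{kl}+(j-i)\sum_{k=1}^{j}\a_k$ with $c_{kl}\in\N_{\geq 0}$; it then records closed formulas such as $(2\sin(\pi b^2))\bE_i=\sum_k(1+q e^{-2\pi b E_i^k(\bu)})e^{2\pi b(U_i^k(\bu)+E_i^k(\bp))}$, in which every exponent is manifestly an integer combination of the $u_{ij},p_{ij}$, so membership in $\T_{\fq\til[\fq]}^{n(n-1)/2}$ is visible term by term, and the preservation of positivity and of the transcendental relations can be read off the formulas (this explicitness is also what the commutant computation of Proposition \ref{Langlands} relies on). You instead invoke symplectic lattice theory plus Stone--von Neumann to produce some $S\in Sp(2N,\R)$ and a unitary implementing it; this spares you from guessing $f$ and is less tied to the particular cluster chart, so it could transfer more readily to other orientations or simply-laced types, but the combinatorial content does not disappear --- it reappears as your integrality/full-rank/unimodularity verification. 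Three adjustments would make your version clean: (a) you do not need $\Lambda$ itself to be unimodular, only full rank with integer-valued $\w$-pairings; by the elementary divisor theorem such a lattice sits inside a self-dual lattice, and an integral self-dual symplectic lattice always admits a standard symplectic basis, which is all the construction requires; (b) the integrality of the pairings need not be re-derived: it is precisely what the term-by-term proof of commutativity between $\bE_i,\bF_i,\bK_i$ and their tilded counterparts establishes (half-integer pairings are exactly what produced the signs of Proposition \ref{sign} for the unmodified generators); (c) there is nothing to arrange about ``$\w$-orthogonality'' of the tilded lattice: the tilded monomials carry the same exponent vectors with $b$ replaced by $b\inv$, so their cross-commutation factors are $e^{2\pi \bi\,\w}=1$ as soon as (b) holds. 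Finally, your injectivity step (PBW monomials, faithfulness) is at the same informal level as the paper's, which simply exhibits the realization; conjugation by a unitary is injective on the represented algebra, and identifying that image with an embedded copy of $\bU_{\fq\til[\fq]}(\sl(n,\R))$ is left equally implicit in both arguments.
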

\begin{proof}
There exists a transformation by multiplication of a unitary function of the form \Eq{\exp(\frac{\pi i}{2}f(\bu)),}
where $f(\bu)$ is quadratic symmetric in $u_{ij}$, which sends
$$2p_{ij}\mapsto 2p_{ij}+\sum c_{kl}u_{kl}+(j-i)\sum_{k=1}^{j}\a_k,\tab \mbox{for some }c_{kl}\in \N_{\geq 0},$$
so that the representation of $\bE_i,\bF_i$ are represented by the full torus \eqref{fulltorus}.
The explicit formula is given by
\begin{eqnarray*}
(2\sin(\pi b^2))\bE_i&=& \sum_{k=1}^{n-i}(1+qe^{-2\pi b E_i^k(\bu)}) e^{2\pi b(U_{i}^k(\bu)+E_i^k(\bp))},\\
(2\sin(\pi b^2))\bF_i&=& \sum_{k=1}^{i}(1+q\inv e^{2\pi bF_i^k(\bu)})e^{2\pi b(-U_{i-k+1}^k(\bu)+F_i^k(\bp))},
\end{eqnarray*}
where $E_i^k(\bu), F_i^k(\bu)$ are the weights, and $E_i^k(\bp),F_i^k(\bp)$ are the shifts, which are the same as before (cf. Theorem \ref{mainthm}), while
$$U_{i}^k(\bu)=\left(\sum_{m=1}^{k-1}\sum_{n=i}^{m+i} -\sum_{m=i}^{k-1}\sum_{n=i}^{i+k-1}\right)u_{mn}.$$

Then all properties of positive representations are preserved.
\end{proof}

Finally, we calculate the commutant of this representation.
\begin{Prop}\label{Langlands}The commutant for the positive representation of $\bU_{q}(\sl(n,\R)$ is generated by $\til[\bE_i],\til[\bF_i]$ and elements of the form
\Eq{\label{KKKK}\til[\bK]_1^\frac{k}{n}\til[\bK]_2^{\frac{2k}{n}}\cdots\til[\bK]_{n-k}^\frac{(n-k)k}{n}\cdots\til[\bK]_{n-2}^\frac{2(n-k)}{n}\til[\bK]_{n-1}^\frac{n-k}{n},}
for $1\leq k\leq n-1$ an integer.
\end{Prop}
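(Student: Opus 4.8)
The plan is to prove the two inclusions separately and to read the result as an instance of Langlands duality: the non-tilde algebra $\bU_\fq$ is carried in its adjoint (root-lattice) form, and I claim its commutant is the simply-connected (weight-lattice) form of the dual quantum group $\bU_{\til[\fq]}(\sl(n,\R))$. The commutant is taken inside the algebra generated by the modular double together with the fractional powers of the Cartan generators $\bK_i$, which is the setting in which the question has a finite answer.

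\textbf{Containment of the listed elements.} By the main theorem every modified generator commutes with all of $\til[\bE]_i,\til[\bF]_i,\til[\bK]_i$; since $\til[\bK]_i$ is recovered from the $\til[\fq]$-commutator $[\til[\bE]_i,\til[\bF]_i]_{\til[\fq]}$, the whole root-lattice dual is already in the commutant. It then remains to check that \eqref{KKKK}, which I write as $\til[\bK]_{\varpi_{n-k}}:=\prod_i\til[\bK]_i^{(C^{-1})_{i,n-k}}$ with $C=(a_{ij})$ the Cartan matrix, commutes with $\bE_j$ and $\bF_j$. Writing $\til[\bK]_i=e^{2\pi b^{-1}\mu_i}$ for the linear form $\mu_i$ in the position variables determined by $\bK_i$, commuting $\til[\bK]_{\varpi_{n-k}}$ through the shift part of $\bE_j$ produces the scalar $\exp(2\pi\bi\sum_i a_{ji}(C^{-1})_{i,n-k})$; the identity $\sum_i a_{ji}(C^{-1})_{i,n-k}=(CC^{-1})_{j,n-k}=\delta_{j,n-k}\in\Z$ collapses this to $1$, and \eqref{bKF} gives the same for $\bF_j$. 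Alongside this I would record the complementary computation that a single fractional power $\til[\bK]_i^{s}$ with $s\notin\Z$ does \emph{not} commute, so that only these weight-lattice combinations survive.

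\textbf{Completeness.} For the reverse inclusion I would take an element $X$ of the ambient algebra centralizing $\bU_\fq$ and use the splitting of the modular double into its commuting non-tilde and tilde halves. The non-tilde content of $X$ must then be central in $\bU_\fq$ and hence scalar on the irreducible positive representation, so $X$ reduces to tilde data together with a Cartan (fractional-$\bK$) factor. Commutation with $\bK_j$ fixes the total $K$-weight of $X$; because $b^2\in\R\setminus\Q$, the resulting condition forces the Cartan exponents to pair integrally against every simple root, i.e. to lie in the weight lattice $P$, which is exactly the lattice generated by the $\til[\bK]_i$ (the root lattice $Q$) together with the fundamental-weight elements \eqref{KKKK}. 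In particular no non-tilde fractional power $\bK_j^{s}$ can occur, since for it the same pairing retains a factor $\fq^{2s}=e^{4\pi\bi b^2 s}$ that is nontrivial for irrational $b^2$. Finally, commutation with $\bE_j,\bF_j$ forces the tilde content of $X$ to close into $\bU_{\til[\fq]}$ by the same CR-diagram bookkeeping used in Section \ref{sec:uqrep}, identifying the commutant with $\langle\til[\bE]_i,\til[\bF]_i,\til[\bK]_{\varpi_{n-k}}\rangle$.

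\textbf{Main obstacle.} The genuine difficulty is the completeness step: one must control the interaction of the Cartan torus with the shift operators tightly enough to admit exactly the weight-lattice directions \eqref{KKKK} and to exclude every non-tilde fractional power as well as every spurious tilde direction. Both exclusions rest on the irrationality of $b^2$, which discretizes the admissible exponents to an integral lattice, after which positivity and self-adjointness pin the residual scalar ambiguities (the constants of the form $q^{n}$ appearing in the transcendental relations) to $1$. Carrying out the lattice-pairing argument rigorously inside the correct operator algebra, rather than in all of $B(L^2(\R^N))$ where the commutant is strictly larger, is where the real work lies.
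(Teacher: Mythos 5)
Your containment half is, up to presentation, the same as the paper's: the paper obtains the Cartan directions by ``solving a standard $n-1$ simultaneous set of linear equations'' coming from \eqref{bKE}--\eqref{bKF}, and your reading of \eqref{KKKK} as the columns of the inverse Cartan matrix, with the pairing $\sum_i a_{ji}(C^{-1})_{i,n-k}=\delta_{j,n-k}\in\Z$ killing the commutation phase, is precisely the solution of those equations. The gaps are in the completeness half. The step ``the non-tilde content of $X$ must be central in $\bU_\fq$, hence scalar'' is not well-posed, and the Schur-type reasoning behind it is circular. A commutant element has no canonical factorization into non-tilde, tilde and Cartan pieces: for instance $\bK_j^{s}\til[\bK]_j^{t}$ commutes with every generator of $\bU_\fq$ whenever $b^{2}s+t\in\Z$, with $s\neq 0$ allowed; this produces no contradiction only because, as an operator, it coincides with $\til[\bK]_j^{\,b^{2}s+t}$, an identification that any formal decomposition argument cannot see. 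Moreover, Schur's lemma is unavailable here: if the positive representation of $\bU_\fq$ were irreducible in the operator-theoretic sense that Schur requires, its commutant would consist of scalars and the proposition itself would be false --- the irreducibility proved in the paper is of a weaker, algebraic kind. A correct completeness argument has to be run at the level of the exponent lattice in the quantum torus realization of Section \ref{sec:slqnpositive}, not through a tensor-factor splitting.

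Second, you never exclude fractional powers of $\til[\bE]_i$ and $\til[\bF]_i$, which is the first (and most delicate) point of the paper's proof: although the modified tilde generators commute with $\bU_\fq$, they do so only weakly --- their spectral measures need not commute, cf.\ the weak commutativity discussed in Section \ref{sec:sl2positive} --- so the functional-calculus powers $\til[\bE]_i^{\,s}$, $\til[\bF]_i^{\,s}$ fail to commute with the individual summands $\bE_j^k$, $\bF_j^k$. Purely algebraic ``CR-diagram bookkeeping'' as in Section \ref{sec:uqrep} cannot detect this functional-analytic obstruction. Finally, cutting the ambient algebra down by fiat to the one generated by the modular double and fractional powers of the Cartan generators (together with the unsubstantiated claim that the commutant in $B(L^2(\R^N))$ is strictly larger) weakens the statement rather than proving it: the choice of which fractional powers to adjoin is part of what must be decided, and the Langlands-dual interpretation in the Remark following the proposition requires that the commutant of the positive representation itself be exactly $\bU_{\til[\fq]}({}^L\g_\R)$, which is what the paper asserts.
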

\begin{proof} Since $\til[\bE_i],\til[\bF_i]$ does not commute with $\bE_j,\bF_j$ in the strong sense, any fractional powers of $\til[\bE_i]$ and $\til[\bF_i]$ will not commute simutaneously with each individual components $\bE_i^k$ and $\bF_i^k$. For the $\til[\bK_i]$ generators, using \eqref{bKE}, \eqref{bKF} amounts to solving a standard $n-1$ simutaneous set of linear equations in $n-1$ variables.
\end{proof}

\begin{Rem}Note that these elements \eqref{KKKK} correspond to the fundamental weights while $\bK_i$ correspond to simple roots, thus $\bU_\fq(\g_\R)$ can be viewed as the "adjoint" quantum group. One can also define its Langlands dual the "simply-connected" quantum group $\bU_\fq({}^L\g_\R)$ by adjoining elements of the form given in \eqref{KKKK}. Then Proposition \ref{Langlands} can be interpreted as the statement that the commutant of $\bU_{\fq}(\g_\R)$ is in fact its Langlands dual quantum group  $\bU_{\til[\fq]}({}^L\g_\R)$.
\end{Rem}


\section{Future perspectives\label{sec:future}}
Our construction of the positive principal series representations for the modular double $\bU_{\fq\til[\fq]}(\g_\R)$ as a certain $q$-deformation of the minimal principal series for $\g_\R$ suggests a strong parallel between the quantum and classical theories, similar to the parallel between finite dimensional representations of $\cU_q(\g_c)$ and $\g_c$. However in the split real case there is also a fundamental difference between the quantum and classical theories first observed by Ponsot and Teschner \cite{PT2} for $\sl(2,\R)$: the positive principal series representation of the modular double are closed under the tensor product in the sense of the direct integral decomposition. We conjecture that the closure of the positive principal series representations of the modular double is still valid for the higher rank case. One way to prove this conjecture would be to show first that the additional properties (i) and (ii) in the introduction satisfied for the positive representations characterize this class. In fact it is easy to show that both properties are preserved under the tensor product, namely from Proposition \ref{transcoprod} one has
\Eq{(\D\be_j)^{\frac{1}{b^2}}=\D\til[\be]_j,\tab(\D \bf_j)^{\frac{1}{b^2}}=\D\til[\bf]_j,}
which immediately imply the conjecture about the closure of the tensor product. Another approach to the proof would be to use a realization of positive representations in the quantum counterparts of the regular $L^2(G_\R)$ or quasi regular $L^2(G_\R/ K)$.

While the tensor product structure of the positive principal series representations destroys the parallel between the quantum and classical theories for the split real algebras, it creates a remarkable parallel between representation theories of the quantum group $\cU_q(\g_c)$ and the modular double $\bU_{\fq\til[\fq]}(\g_\R)$. In fact thanks to the closure of tensor products of positive representations one can define a continuous version of the braided tensor category for $\bU_{\fq\til[\fq]}(\g_\R)$ following the well established example of $\cU_q(\g_c)$. In the case of $\g_\R=\sl(2,\R)$ the structure of the braided tensor category has been extensively studied by Teschner et al \cite{BT, PT1,PT2}. A generalization of their results to an arbitrary simply-laced case is an interesting direction for the future research.

The existence of the braided tensor category of positive representations of the modular double $\bU_{\fq\til[\fq]}(\g_\R)$ opens an extensive program proposed by the first author in \cite{F}. Namely one can try to find the analogues of various remarkable results and constructions that were discovered and studied in relation to the braided tensor category of the finite-dimensional representations of the quantum group $\cU_q(\g_c)$. This program is not entirely new since the different partial results already exist primarily in the case of $\g_\R=\sl(2,\R)$, and it was behind the work of Teschner et al. However our construction of the positive representations for higher rank algebras strongly indicates that all the results for $\cU_{q\til[q]}(\sl(2,\R))$ can be generalized to other types of split real quantum groups and therefore one can envision future perspectives for the positive principal series representations comparable to the past developments related to finite-dimensional representations of the quantum groups initiated by Drinfeld and Jimbo.

 In particular, we would like to mention the following three  directions:
\begin{itemize}
\item[(1)] Topological quantum field theory and Chern-Simons-Witten theory,
\item[(2)] Equivalence of categories of affine Lie algebras and quantum groups,
\item[(3)] Geometrization and categorification of quantum groups and its representations.
\end{itemize}

Below we make a few comments on the versions of the above three directions for the modular double $\bU_{\fq\til[\fq]}(\g_\R)$ and the positive principal series representations.

(1) The first examples of topological quantum field theory (TQFT) introduced in \cite{At,Se,Wi1} were based on a subquotient category of finite-dimensional representations of the quantum group $\cU_q(\g)$ at the root of unity $q$ \cite{RT}. An alternative geometric approach to the same class of TQFT's has been suggested by Witten \cite{Wi2} and is known as the Chern-Simons-Witten (CSW) model for a compact group $G_c$. In the split real case the category of positive representations of $\cU_{q\til[q]}(\sl(2,\R))$ studied in \cite{BT,PT1,PT2} was suggested by Teschner (see introduction in \cite{Te}) as an alternative approach to the construction of a new class of TQFT's that arise from the quantization of the Teichm\"{u}ller spaces \cite{CF,K}. This construction of TQFT's has been completed by R. Raj \cite{R}. The geometric approach based on CSW model for a split real group $G_\R$ has been extensively studied in a recent work \cite{Di}. It is intimately related to three dimensional hyperbolic geometry and is still in the beginning of its development.

(2) The equivalence of categories of highest weight representations of affine Lie algebras and quantum groups were extensively studied in \cite{KL}. The explicit construction of the equivalence can be simplified by considering an additional category of representations of $W$-algebras, see \cite{St}. In the split real case it is still an open problem to construct a principal series of representations of the affine Lie algebra $\hat{\g}_\R$ even for the case $\g_\R=\sl(2,\R)$. However one can discuss the equivalence of categories of representations of the modular double $\bU_{\fq\til[\fq]}(\g_\R)$ and the $W$-algebra associated to $\g_\R$. In the case when $\g_\R=\sl(2,\R)$ the $W$-algebra is the Virasoro algebra and there is a strong evidence that the appropriate category of representations of the Virasoro algebra is associated to the Liouville model \cite{Te}.

(3) The first geometric construction of the finite-dimensional representations of $\cU_q(\g)$ (as well as their affine counterparts) based on the gauge theory has been discovered by Nakajima in \cite{Na}. By considering various categories of sheaves on the Nakajima varieties one obtains a categorification of these representations \cite{CKL}. Since the work of Nakajima other geometric and categorical constructions of finite-dimensional representations of $\cU_q(\g)$ have been found \cite{FFFR, Li}. In the past year physicists have observed a remarkable relation between CSW theory for the split real group $G_\R$ and the $N=2$ super-symmetric gauge theory on a three-dimensional sphere \cite{DGG,TY}. This work can be considered as a first step towards a geometrization of the category of positive representations of the modular double \'{a} la Nakajima, but a lot more work is needed to get full analogues of geometrization and categorification of the finite-dimensional representations.

The three directions of development of positive representations of the modular double of a quantum group can be complemented by various others, such as the study of these representations at roots of unity, generalizations to the affine and Kac-Moody types, counterparts of the geometric realizations of quantum groups via homology of configuration spaces, and any directions that the reader can suggest in addition. Although one cannot predict which of these directions will be particularly fruitful, it is clear that we are entering a new stage in the representation theory of quantum groups.

\end{document}